\documentclass{amsart}
\usepackage{amsfonts, amsmath, amssymb, amsthm, esint}
\usepackage{graphicx}
\usepackage{todonotes}
\newtheorem{theorem}{Theorem}[section]
\newtheorem{remark}[theorem]{ Remark}

\newtheorem{corollary}[theorem]{Corollary}

\newtheorem{lemma}[theorem]{Lemma}
\newtheorem{clm}[theorem]{Claim}
\newtheorem{definition}[theorem]{Definition}

\newcommand{\V}{\Vert}
\newcommand{\RR} {\mathbb R}

\newcommand{\pa} {\partial}
\newcommand{\Cal} {\mathcal}

\newcommand{\beq} {\begin{equation}}
\newcommand{\eeq} {\end{equation}}

\newcommand{\Vol}{\operatorname{Vol}}
\newcommand{\diam}{\operatorname{diam}}

\begin{document}
\title[Tubular neighbourhoods, cone conditions and inscribed balls]{Some remarks on nodal geometry in the smooth setting}
\author{Bogdan Georgiev and Mayukh Mukherjee}

\address{Max Planck Institute for Mathematics\\ Vivatsgasse 7\\ 53111 Bonn,
\\ Germany}

\email{bogeor@mpim-bonn.mpg.de}
\email{mathmukherjee@gmail.com}
\begin{abstract}

We consider a Laplace eigenfunction $\varphi_\lambda$ on a smooth closed Riemannian manifold, that is, satisfying $-\Delta \varphi_\lambda = \lambda \varphi_\lambda$. We introduce several observations about the geometry of its vanishing (nodal) set and corresponding nodal domains.

First, we give asymptotic upper and lower bounds on the volume of a tubular neighbourhood around the nodal set of $\varphi_\lambda$. This extends previous work of Jakobson and Mangoubi in case $ (M, g) $ is real-analytic. A significant ingredient in our discussion are some recent techniques due to Logunov (cf. \cite{L1}).

Second, we exhibit some remarks related to the asymptotic geometry of nodal domains. In particular, we observe an analogue of a result of Cheng in higher dimensions regarding the interior opening angle of a nodal domain at a singular point. Further, for nodal domains $\Omega_\lambda$ on which $\varphi_\lambda$ satisfies exponentially small $L^\infty$ bounds, we give some quantitative estimates for radii of inscribed balls.
\end{abstract}
\maketitle
\section{Introduction}
In this note we consider a closed $n$-dimensional Riemannian manifold $M$ with smooth metric $g$, and the Laplacian (or the Laplace-Beltrami operator) $-\Delta$ on $M$\footnote{We use the analyst's sign convention, namely, $-\Delta$ is positive semidefinite.}. For an eigenvalue $\lambda$ of $-\Delta$ and a corresponding eigenfunction $\varphi_\lambda$, we
recall that a nodal domain $ \Omega_\lambda $ is a connected component of the complement of the nodal set $N_{\varphi_\lambda} := \{ x \in M : \varphi_\lambda (x) = 0\}$. We will denote $N_{\varphi_\lambda}$ by $N_\varphi$ via a slight abuse of notation.

Before we formulate our main questions, let us briefly remark on notation: throughout the paper, $|S|$ denotes the volume of the set $S$. The letters $c, C$ etc. are used to denote constants dependent on $(M, g)$ and independent of $\lambda$. The values of such $c, C$ etc. can vary from line to line. Lastly, the expression $X \lesssim Y$ means $X \leq CY$ for some positive constant $C$, with an analogous meaning for $\gtrsim$, and when $X$ and $Y$ are comparable up to constants (i.e., $X \lesssim Y \lesssim X$), we write $X \sim Y$. 

\subsection{Tubular neighbourhoods}
Let $T_{\varphi, \delta} := \{x \in M : \text{dist}(x, N_\varphi) < \delta\}$, which is the $\delta$-tubular neighbourhood of the nodal set $N_\varphi$. We are interested in deriving upper and lower bounds on the volume of $T_{\varphi, \delta}$ in terms of $\lambda$ and $\delta$ in the setting of a smooth manifold. Before investigating the question further, let us begin with a brief overview and motivation.

We first recall the problem of estimating the size of the $ (n-1) $-Hausdorff measure of the nodal set - the question was raised by Yau (Problem \#74, [Yau82]) who conjectured that
\begin{equation}
	C_1 \sqrt{\lambda} \leq \mathcal{H}^{n-1}(N_{\varphi_\lambda}) \leq C_2 \sqrt{\lambda},
\end{equation}
where $ C_1, C_2 $ are constants that depend on $ (M, g) $.

In a celebrated paper (cf. \cite{DF}), Donnelly and Fefferman were able to confirm Yau's conjecture when $ (M, g) $ is real-analytic. Roughly speaking, their techniques relied on analytic continuation and delicate estimates concerning growth of polynomials. 

Later on, in the smooth case, the question of Yau was extensively investigated further: to name a few, we refer to the works by Sogge-Zelditch, Colding-Minicozzi, Mangoubi, Hezari-Sogge, Hezari-Riviere, etc (cf. \cite{SZ1}, \cite{SZ2}, \cite{CM}, \cite{Man4}, \cite{HS}, \cite{HR})  in terms of the lower bound, and Hardt-Simon, Dong, etc (cf. \cite{HaSi}, \cite{D}) in terms of the upper bound; we also refer to \cite{Z} for a far-reaching survey. As an outcome, non-sharp estimates were obtained. The corresponding methods of study were quite broad in nature, varying from local elliptic PDE estimates on balls of size $ \sim 1/\sqrt{\lambda} $ (also referred to as ``wavelength'' balls) to global techniques studying the wave equation.

Recently, in \cite{L1}, \cite{L2}, Logunov made a significant breakthrough which delivered the lower bound in Yau's conjecture for closed smooth manifolds $ (M, g) $ as well as a polynomial upper bound in terms of $ \lambda $. In a nutshell, his methods utilized delicate combinatorial estimates based on doubling numbers of harmonic functions - as pointed out in \cite{L1}-\cite{L2}, some of these techniques were also developed in collaboration with Malinnikova (cf. \cite{LM} as well).

Now, with the perspective of Yau's conjecture, one can ask about further ``stability'' properties of the nodal set - for example, how is the volume of the tubular neighbourhood $ T_{\varphi, \delta} $ of radius $\delta$ around the nodal set behaving in terms of $ \lambda $ and $ \delta $? According to Jakobson-Mangoubi (see Acknowledgments, \cite{JM}), it seems that initially such a question was posed by M. Sodin.

In the real analytic setting, the question about the volume of a tubular neighbourhood $ T_{\varphi, \delta} $ was studied by Jakobson and Mangoubi (cf. \cite{JM}). They were able to obtain the following sharp bounds:

\begin{theorem}[\cite{JM}]\label{JaMa_RA}
	Let $M$ be a real analytic closed Riemannian manifold. Then we have
	\beq\label{eq:JM-bound}
	\sqrt{\lambda}\delta \lesssim |T_{\varphi, \delta}| \lesssim \sqrt{\lambda}\delta,\eeq
	where $\delta \lesssim \frac{1}{\sqrt{\lambda}}$.
\end{theorem}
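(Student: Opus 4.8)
The plan is to reduce both inequalities in \eqref{eq:JM-bound} to the classical real-analytic estimate $\mathcal{H}^{n-1}(N_\varphi) \sim \sqrt{\lambda}$ of Donnelly--Fefferman (\cite{DF}) together with a uniform description of the nodal set on wavelength balls. The precise local input I would isolate first is the following: there is $c_0 = c_0(M,g) > 0$ such that for every $x \in N_\varphi$ and every radius $\rho \le c_0/\sqrt{\lambda}$,
\begin{equation}\label{eq:local}
	\rho^{n-1} \;\lesssim\; \mathcal{H}^{n-1}\big(N_\varphi \cap B(x,\rho)\big) \;\lesssim\; \rho^{n-1}.
\end{equation}
The upper half of \eqref{eq:local} would follow from the (almost) monotonicity of the nodal measure in the radius together with the global bound $\mathcal{H}^{n-1}(N_\varphi) \lesssim \sqrt\lambda$; the lower half expresses the fact that, since $\varphi_\lambda$ has uniformly bounded doubling index (equivalently, bounded frequency) on balls of radius at most $2c_0/\sqrt\lambda$, the nodal set genuinely crosses $B(x,\rho)$ rather than merely touching its centre --- this can be obtained, for instance, by combining a Faber--Krahn lower bound on the volume of a nodal domain meeting $B(x,\rho)$ with the relative isoperimetric inequality, or via analytic continuation in the spirit of \cite{DF}, \cite{JM}.

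Granting \eqref{eq:local}, the \emph{lower bound} on $|T_{\varphi,\delta}|$ is a packing argument. Fix $\delta \le c_0/\sqrt\lambda$ and choose a maximal $2\delta$-separated set $\{x_i\} \subset N_\varphi$; by maximality $N_\varphi \subseteq \bigcup_i B(x_i,2\delta)$, while the balls $B(x_i,\delta/2)$ are pairwise disjoint and each is contained in $T_{\varphi,\delta}$ (its points lie within $\delta/2 < \delta$ of the nodal point $x_i$). Hence $|T_{\varphi,\delta}| \ge \sum_i |B(x_i,\delta/2)| \gtrsim \#\{i\}\,\delta^{n}$. Combining the cover $N_\varphi \subseteq \bigcup_i B(x_i,2\delta)$, the upper half of \eqref{eq:local}, and the Donnelly--Fefferman lower bound,
\begin{equation}
	\sqrt\lambda \;\lesssim\; \mathcal{H}^{n-1}(N_\varphi) \;\le\; \sum_i \mathcal{H}^{n-1}\big(N_\varphi \cap B(x_i,2\delta)\big) \;\lesssim\; \#\{i\}\,\delta^{n-1},
\end{equation}
so that $\#\{i\} \gtrsim \sqrt\lambda\,\delta^{-(n-1)}$ and therefore $|T_{\varphi,\delta}| \gtrsim \sqrt\lambda\,\delta$.

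The \emph{upper bound} is the dual covering estimate. Let $\{y_j\} \subset N_\varphi$ be a maximal $\delta$-separated set. If $\operatorname{dist}(z,N_\varphi) < \delta$ then $z$ lies within $\delta$ of some nodal point, which in turn lies within $\delta$ of some $y_j$, so $T_{\varphi,\delta} \subseteq \bigcup_j B(y_j,2\delta)$ and $|T_{\varphi,\delta}| \le \sum_j |B(y_j,2\delta)| \lesssim \#\{j\}\,\delta^{n}$. The balls $B(y_j,\delta/2)$ are pairwise disjoint, and by the lower half of \eqref{eq:local} each carries nodal measure $\gtrsim \delta^{n-1}$; since $\sum_j \mathcal{H}^{n-1}(N_\varphi \cap B(y_j,\delta/2)) \le \mathcal{H}^{n-1}(N_\varphi) \lesssim \sqrt\lambda$, we get $\#\{j\} \lesssim \sqrt\lambda\,\delta^{-(n-1)}$, hence $|T_{\varphi,\delta}| \lesssim \sqrt\lambda\,\delta$, as required. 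Note that the hypothesis $\delta \lesssim 1/\sqrt\lambda$ is precisely what places all the small balls above in the wavelength regime where \eqref{eq:local} is available.

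I expect the main difficulty to be \eqref{eq:local} with constants independent of $\lambda$ and of the base point $x$ --- in effect, showing that the upper Minkowski content of $N_\varphi$ at scales $\lesssim 1/\sqrt\lambda$ is comparable to its $(n-1)$-Hausdorff measure, uniformly. This is exactly where real-analyticity enters: it yields a uniform doubling-index bound for $\varphi_\lambda$ on wavelength balls and, through it, control of the nodal set as a tame hypersurface both from above and from below. I would also remark that in the smooth category the upper half of \eqref{eq:local} is no longer a classical fact, which is precisely why the smooth extension pursued later in the paper must replace Donnelly--Fefferman's analytic-continuation input with Logunov's combinatorial machinery (\cite{L1}).
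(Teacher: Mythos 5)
Your lower bound argument has a genuine gap: the upper half of your two-sided local estimate $\mathcal{H}^{n-1}(N_\varphi\cap B(x,\rho))\lesssim\rho^{n-1}$ is false, and the premise behind it --- that $\varphi_\lambda$ has \emph{uniformly} bounded doubling index on all wavelength balls in the real-analytic category --- is incorrect. Real-analyticity gives only $N\lesssim\sqrt\lambda$ (Theorem \ref{thm:DF-bounds}), and this is saturated: for the sectoral harmonic $\operatorname{Re}\big((x_1+ix_2)^k\big)$ on $S^2$ (with $\lambda=k(k+1)$) the nodal set consists of $2k$ meridians through the poles, so $\mathcal{H}^{1}\big(N_\varphi\cap B(p,\rho)\big)\sim k\rho\sim\sqrt\lambda\,\rho$ at a pole $p$, not $\sim\rho$. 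The local upper bound on nodal volume scales with the local doubling index, and there is no monotonicity that lets you descend from the global bound $\mathcal{H}^{n-1}(N_\varphi)\lesssim\sqrt\lambda$ to a uniform local one, since the nodal set can concentrate. With the bound that is actually available, $\mathcal{H}^{n-1}(N_\varphi\cap B(x_i,2\delta))\lesssim\sqrt\lambda\,\delta^{n-1}$, your count gives only $\#\{i\}\gtrsim\delta^{-(n-1)}$ and hence $|T_{\varphi,\delta}|\gtrsim\delta$, losing the factor $\sqrt\lambda$ entirely. The way \cite{JM} (and this paper, in the smooth setting) avoid this is to forgo the global covering argument: Donnelly--Fefferman produce $\gtrsim\lambda^{n/2}$ \emph{disjoint} wavelength balls of controlled doubling exponent centred at zeros, and on each such ball the comparability of the volumes of positivity and negativity plus Brunn--Minkowski gives $|T_{\varphi,\delta}\cap 2B|\gtrsim\rho^{n-1}\delta$ directly (Lemma \ref{lem:Step-I}); summing over these good balls yields $\sqrt\lambda\,\delta$ without any local upper bound on $\mathcal{H}^{n-1}$.

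Your upper bound argument, by contrast, is essentially sound and parallels the paper's proof of (\ref{up}): the packing of disjoint balls $B(y_j,\delta/2)$ each carrying nodal measure $\gtrsim\delta^{n-1}$, combined with $\mathcal{H}^{n-1}(N_\varphi)\lesssim\sqrt\lambda$, does give $|T_{\varphi,\delta}|\lesssim\sqrt\lambda\,\delta$. But be careful about the status of the local lower bound $\mathcal{H}^{n-1}(N_\varphi\cap B(y,\delta/2))\gtrsim\delta^{n-1}$ at \emph{every} nodal point: your sketch via Faber--Krahn plus the relative isoperimetric inequality only controls the minority volume through the asymmetry estimate (\ref{man}), which degrades with the doubling index and yields a bound far weaker than $\delta^{n-1}$ on bad balls. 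At full strength this local estimate is Nadirashvili's conjecture, i.e.\ Theorem 1.2 of \cite{L1} (which postdates \cite{JM}; their original proof routes through analytic continuation instead). So the upper bound goes through, but only by invoking a deep theorem where you suggest a routine argument.
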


As remarked by Jakobson and Mangoubi, such bounds describe a certain regularity property of the nodal set - the upper suggests that the nodal set does not have ``too many needles or very narrow branches''; the lower bound hints that the nodal set ``does not curve too much''.

Concerning the proof, Theorem \ref{JaMa_RA} extends the techniques of Donnelly and Fefferman from \cite{DF} by adding an extra parameter $\delta$ to the proofs of \cite{DF}, and verifying that the key arguments still hold.

With that said, it seems natural to ask the question of obtaining similar bounds on the tubular neighbourhood in the smooth case as well - in this setting one can no longer fully exploit the analytic continuation and polynomial approximation techniques in the spirit of Donnelly and Fefferman.

Our first result states that in the smooth setting we have the following:
\begin{theorem}\label{JaMa}
	Let $ (M,g) $ be a smooth closed Riemannian manifold and let $ \epsilon > 0 $ be a given sufficiently small number. Then there exist constants $ r_0 = r_0(M, g) > 0 $ and $ C_1 = C_1(\epsilon, M, g) > 0 $ such that
	\begin{equation}
		\label{low} |T_{\varphi,\delta}| \geq C_1 \lambda^{1/2 - \epsilon}\delta,
	\end{equation}
where $\delta \in (0, \frac{r_0}{\sqrt{\lambda}})$ is arbitrary. 
	
	On the other hand, there exist positive real numbers $\kappa = \kappa(M, g)$ and $ C_2 = C_2(M, g) $, such that
	\begin{equation}\label{up}
		|T_{\varphi, \delta}| \leq C_2 \lambda^{\kappa}\delta,
	\end{equation}
	where again $\delta $ can be any number in the interval $(0, \frac{r_0}{\sqrt{\lambda}})$.
\end{theorem}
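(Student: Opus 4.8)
Both halves of the theorem will be deduced from the nodal set estimates of Logunov \cite{L1,L2}, but by genuinely different mechanisms. For the upper bound \eqref{up} the plan is to combine his polynomial bound $\mathcal{H}^{n-1}(N_\varphi)\lesssim\lambda^{\kappa}$ with a Vitali covering. Fix $\delta\in(0,r_0/\sqrt\lambda)$ and choose a maximal $\delta$-separated set $\{y_j\}\subset N_\varphi$; then $\{B(y_j,\delta/2)\}$ is disjoint and $\{B(y_j,3\delta)\}$ covers $T_{\varphi,\delta}$. The one local ingredient is a uniform wavelength-scale lower bound
\beq\label{eq:loc-low}
\mathcal{H}^{n-1}\big(N_\varphi\cap B(y,s)\big)\gtrsim s^{n-1},\qquad y\in N_\varphi,\ \ 0<s\lesssim 1/\sqrt\lambda,
\eeq
with constant depending only on $(M,g)$: rescaling $B(y,s)$ to the unit ball turns $\varphi_\lambda$ into a solution of an elliptic equation $\Delta_{g'}u+Vu=0$ with $\|V\|_\infty\le r_0^2$ and $u(0)=0$, for which the standard structure theory of nodal sets of elliptic solutions (cf.\ \cite{HaSi}, and the ingredients used in \cite{L1}) gives $\mathcal{H}^{n-1}(\{u=0\}\cap B_{1/2})\gtrsim 1$; undoing the scaling yields \eqref{eq:loc-low}. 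Disjointness together with \eqref{eq:loc-low} then forces $\#\{y_j\}\,\delta^{n-1}\lesssim\sum_j\mathcal{H}^{n-1}(N_\varphi\cap B(y_j,\delta/2))\le\mathcal{H}^{n-1}(N_\varphi)\lesssim\lambda^{\kappa}$, whence $|T_{\varphi,\delta}|\le\sum_j|B(y_j,3\delta)|\lesssim\#\{y_j\}\,\delta^{n}\lesssim\lambda^{\kappa}\delta$.

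For the lower bound \eqref{low} I would run the dual argument at the wavelength scale, and here Logunov's combinatorial control of doubling (frequency) indices is indispensable. Decompose $M$ into $\sim\lambda^{n/2}$ cubes of side $\sim1/\sqrt\lambda$ with bounded overlap. If a cube $Q$ has doubling index $N_Q$ and meets $N_\varphi$, then --- since $\delta$ is smaller than the side of $Q$ --- rescaling and combining \eqref{eq:loc-low} with a packing argument (a maximal disjoint family of $\delta$-balls centred on $N_\varphi\cap Q$ lies inside $T_{\varphi,\delta}$, and, by Logunov's local upper bound $\mathcal{H}^{n-1}(N_\varphi\cap B)\lesssim r^{n-1}N_Q^{\alpha_n}$ on a ball $B$ of radius $r\lesssim 1/\sqrt\lambda$, this family has cardinality $\gtrsim\mathcal{H}^{n-1}(N_\varphi\cap Q)/(\delta^{n-1}N_Q^{\alpha_n})\gtrsim\lambda^{-(n-1)/2}/(\delta^{n-1}N_Q^{\alpha_n})$) gives
\beq\label{eq:loc-tube}
|T_{\varphi,\delta}\cap Q|\ \gtrsim\ \delta\,\lambda^{-(n-1)/2}\,N_Q^{-\alpha_n},
\eeq
with $\alpha_n$ a dimensional constant. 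Now fix $\epsilon>0$. Using the combinatorial machinery behind Logunov's lower bound $\mathcal{H}^{n-1}(N_\varphi)\gtrsim\sqrt\lambda$ (the simplex lemma of \cite{L1,L2} and its iteration across a subdivision into $A^n$ equal subcubes with $A=A(\epsilon,n)$ large, cf.\ also \cite{LM}), one extracts a family of $\gtrsim\lambda^{n/2-\epsilon}$ pairwise disjoint wavelength cubes, each meeting $N_\varphi$ and each carrying doubling index $N_Q\lesssim\lambda^{\epsilon}$ (equivalently: the nodal set cannot hide more than a $\lambda^{-\epsilon}$-fraction of its volume in high-doubling cubes); summing \eqref{eq:loc-tube} over this family gives $|T_{\varphi,\delta}|\gtrsim\lambda^{n/2-\epsilon}\cdot\delta\lambda^{-(n-1)/2}\lambda^{-\alpha_n\epsilon}=\delta\,\lambda^{1/2-(1+\alpha_n)\epsilon}$, which is \eqref{low} after relabelling $\epsilon$.

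The soft parts are the covering, packing and rescaling arguments; the crux is the last step of the lower bound, namely distilling from Logunov's growth estimates the clean count ``$\gtrsim\lambda^{n/2-\epsilon}$ wavelength cubes simultaneously meet $N_\varphi$ and have doubling index $\lesssim\lambda^{\epsilon}$'' (on a fixed dilate, so that the balls in the packing step inherit a comparable doubling bound), while keeping the losses from both the counting and the thickening of the nodal set down to powers of $\lambda^{\epsilon}$. The $\lambda^{\epsilon}$ (respectively $\lambda^{\kappa}$) loss in \eqref{low} (respectively \eqref{up}), relative to the real-analytic bounds of Theorem \ref{JaMa_RA}, is precisely the imprint of the non-uniform doubling index in the smooth setting, where a wavelength ball need no longer behave like a polynomial of bounded degree.
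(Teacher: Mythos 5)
Your proof of the upper bound (\ref{up}) is essentially the paper's argument: a $\delta$-scale covering of $T_{\varphi,\delta}$ (you use a Vitali family of balls where the paper uses a grid of cubes), the local lower bound $\mathcal{H}^{n-1}(N_\varphi\cap B(y,\delta))\gtrsim \delta^{n-1}$ to count the pieces, and the global polynomial bound of \cite{L2} to close. One small caveat: that local lower bound is not ``standard structure theory'' in the sense of \cite{HaSi} when $n\geq 3$ --- it is precisely Logunov's resolution of Nadirashvili's conjecture (Theorem 1.2 of \cite{L1}), which you do cite, so this part stands.

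The lower bound, however, has a genuine gap at the step you yourself single out as the crux. You need to extract $\gtrsim\lambda^{n/2-\epsilon}$ disjoint wavelength cubes that meet $N_\varphi$ \emph{and} have doubling index $\lesssim\lambda^{\epsilon}$. This does not follow from the simplex/hyperplane lemmas or from the combinatorial cube-counting theorem (Theorem \ref{thm:5.2}): that theorem, iterated from unit scale down to wavelength scale, reduces the doubling index of the good subcubes only by a factor $2^{-c\log B/\log\log B}$ per generation, so after the $\sim\log\sqrt\lambda/\log B$ generations needed to reach scale $1/\sqrt{\lambda}$ the accumulated gain is $\lambda^{-c/(2\log\log B)}$ --- a fixed small power of $\lambda$, nowhere near the threshold $\lambda^{\epsilon}$. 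Producing many wavelength balls centred at zeros with \emph{controlled} doubling index is exactly the open question recorded in the Remark after the proof sketch; the best known count is $\lambda^{(n+1)/4}$ from \cite{CM}, which is far short of $\lambda^{n/2-\epsilon}$. Since your local tube estimate loses a factor $N_Q^{-\alpha_n}$, with $\alpha_n$ the (large) exponent from Logunov's local upper bound, the unconditional version of your argument only gives $|T_{\varphi,\delta}|\gtrsim\delta\,\lambda^{1/2-\alpha_n/2}$, which is much weaker than (\ref{low}) and may be vacuous. The paper gets around this by making high doubling index an asset rather than an obstruction: if a wavelength ball centred at a zero has large doubling index $N$, Theorem \ref{thm:Number-of-Zeros} produces $[N^{\epsilon}]^{n-1}2^{C\log N/\log\log N}$ disjoint sub-balls of radius $\rho/(N^{\epsilon}\log^{6}N)$ centred at zeros, and an iteration in scale (terminating either when the doubling index becomes bounded, where asymmetry plus Brunn--Minkowski applies as in Lemma \ref{lem:Step-I}, or when the radius drops to $\sim\delta$) shows that \emph{every} wavelength ball centred at a zero satisfies $|T_{u,\delta}\cap B|\gtrsim N^{-\epsilon}\rho^{n-1}\delta\gtrsim\lambda^{-\epsilon/2}\rho^{n-1}\delta$. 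Summing over all $\sim\lambda^{n/2}$ such balls then yields (\ref{low}) with no need to control how many of them have small doubling index. If you want to salvage your route, you would need to either prove the doubling-index count you assert (which would be a result of independent interest) or replace it with an analogue of this iteration.
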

Unfortunately, as one sees in the course of the proof of (\ref{low}), the constant $ C_1 $ goes to $0$ as $ \epsilon $ approaches $ 0 $.

Our methods for proving Theorem \ref{JaMa} involve a combination of the tools of ~\cite{DF}-~\cite{JM}, along with some new insights provided by ~\cite{L1}, ~\cite{L2} and ~\cite{LM}. Particularly, in view of the lower bound in Yau's conjecture and of the results in \cite{JM}, it seems that the bound (\ref{low}) is still not optimal. Of course, the upper bound in (\ref{up}) is just polynomial, and, as would be clear from the proof, improvement of the upper bound would be affected by the corresponding improvement of the upper bound in Yau's conjecture.

\subsection{Some remarks on the asymptotic geometry of nodal domains}
As mentioned in the Abstract, in this subsection, we state several remarks regarding some aspects of the asymptotic nodal geometry.
\subsubsection{Interior cone conditions} In dimension $n = 2$, a famous result of Cheng \cite{Ch} says the following (see also ~\cite{St} for a proof using Brownian motion):

\begin{theorem}
	For a compact Riemannian surface $M$, the nodal set $N_\varphi$ satisfies an interior cone condition with opening angle $\alpha \gtrsim \frac{1}{\sqrt{\lambda}}$.
\end{theorem}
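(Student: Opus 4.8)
The plan is to reduce the statement to the two ingredients underlying Cheng's argument: the local structure of a nodal set in two dimensions, and the elliptic bound on the vanishing order of an eigenfunction.

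First, recall the local structure. Fix $p \in N_\varphi$ and let $\ell = \ell(p)$ denote the order to which $\varphi_\lambda$ vanishes at $p$. In a geodesic normal chart at $p$ the equation $-\Delta \varphi_\lambda = \lambda \varphi_\lambda$ reads $L \varphi_\lambda = 0$ with $L$ a second order uniformly elliptic operator with smooth coefficients, so by the classical results of Bers and Hartman--Wintner the leading term of $\varphi_\lambda$ at $p$ is a homogeneous polynomial $P_\ell$ of degree $\ell$, harmonic for the principal part of $L$ frozen at $p$. In two dimensions, using the Bers--Vekua theory of pseudo-analytic functions, Cheng upgrades this to the assertion that a $C^1$-diffeomorphism fixing $p$ carries $N_\varphi$ near $p$ onto the nodal set of $P_\ell$, which consists of $\ell$ lines through the origin meeting at equal angles $\pi/\ell$. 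Hence a neighbourhood of $p$ is cut by $N_\varphi$ into $2\ell$ curvilinear sectors of opening angle $\pi/\ell$, and each nodal domain abutting $p$ contains at least one of them.

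Second, use the bound $\ell(p) \le C\sqrt{\lambda}$, valid for all $p$ with $C = C(M,g)$. This is the standard consequence of the doubling inequality $\sup_{B(p,2r)} |\varphi_\lambda| \le e^{C\sqrt{\lambda}} \sup_{B(p,r)} |\varphi_\lambda|$ (Donnelly--Fefferman, and see also Garofalo--Lin and Han--Lin), since the vanishing order at $p$ is controlled by the doubling index of $\varphi_\lambda$ on an arbitrarily small ball about $p$, which is in turn $\le C\sqrt{\lambda}$ by the almost-monotonicity of the frequency function.

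Combining the two: each nodal domain $\Omega$ with $p \in \partial \Omega$ contains a curvilinear sector at $p$ of opening angle $\pi/\ell(p) \ge \pi/(C\sqrt{\lambda})$, and since the diffeomorphism above is bi-Lipschitz near $p$ one may inscribe a genuine geodesic cone with vertex $p$ and opening angle $\gtrsim 1/\sqrt{\lambda}$ (at regular points $\ell = 1$, and the angle is $\sim \pi$). The point requiring care is \emph{uniformity} as $\lambda \to \infty$ and over $p \in M$: one must control the scale on which the expansion $\varphi_\lambda = P_\ell + (\text{lower order})$ is dominated by its leading term, together with the bi-Lipschitz constant of Cheng's transformation. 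This is handled by rescaling to the wavelength scale --- on $B(p, r_0/\sqrt{\lambda})$, dilated by $\sqrt{\lambda}$, the metric is uniformly close to Euclidean and $\varphi_\lambda$ solves a uniformly elliptic equation with bounded coefficients; Cauchy-type estimates then show that on a sub-ball of radius $\gtrsim 1/\ell(p)$ (in rescaled coordinates) the lower order terms are of relative size $O(1)$, which perturbs the angular position of each nodal line by at most a fixed fraction of $\pi/\ell(p)$, so that every sector keeps opening angle $\gtrsim 1/\ell(p) \gtrsim 1/\sqrt{\lambda}$. Note that the $\ell(p)$-dependence is absorbed entirely into the radius of the inscribed cone (which may shrink to $\sim \lambda^{-1}$) and does not affect its aperture; making this accounting precise, in particular tracking the region of dominance of $P_\ell$ in terms of its degree, is the technical heart of the argument.
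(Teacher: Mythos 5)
The paper itself offers no proof of this statement: it is quoted verbatim as Cheng's theorem, citing \cite{Ch}, with \cite{St} mentioned for an alternative proof via Brownian motion. So there is no ``paper proof'' to compare against; what follows is an assessment of your reconstruction on its own merits.

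Your argument is, in outline, exactly Cheng's: local structure of the zero set of a second-order elliptic solution in two dimensions (Bers/Hartman--Wintner, upgraded by Bers--Vekua pseudo-analytic function theory to the statement that near a zero of order $\ell$ there is a local $C^1$ diffeomorphism fixing $p$, with derivative the identity at $p$, carrying $N_\varphi$ onto the nodal set of the leading homogeneous harmonic polynomial $P_\ell$, i.e.\ $\ell$ equiangular lines), combined with the vanishing-order bound $\ell(p) \lesssim \sqrt{\lambda}$. This is correct, and it is the standard modern presentation. One historical caveat: the $\sqrt{\lambda}$ bound on the vanishing order comes from the Donnelly--Fefferman doubling inequality (1988), which postdates Cheng (1976); Cheng's original paper bounded the vanishing order by a different route, so the theorem in the precise form quoted here is really a synthesis of Cheng's structure theorem with Donnelly--Fefferman.

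Where you spend unnecessary effort is the final paragraph on uniformity. Observe that the paper's Definition of ``interior cone condition'' (the one stated just before Theorem \ref{ICC}) is formulated purely in terms of the space of tangent directions $\Cal{S}_{x}N_\varphi \subset S(T_xM)$: the opening angle is measured by the inscribed ball in a connected component of $S(T_xM)\setminus \Cal{S}_xN_\varphi$. There is no inscribed cone of positive radius to produce, so there is no need to control the scale on which $P_\ell$ dominates the remainder, nor to track a bi-Lipschitz constant. The Bers structure theorem (with $D\Phi(0) = \mathrm{id}$) already identifies $\Cal{S}_pN_\varphi$ with the $2\ell$ equally spaced directions of $N_{P_\ell}$ on $S^1$, and then $\ell \lesssim \sqrt{\lambda}$ gives arcs of length $\pi/\ell \gtrsim 1/\sqrt{\lambda}$ directly. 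Your perturbative accounting (``Cauchy-type estimates,'' nodal lines moved by a fixed fraction of $\pi/\ell$) is the hard part of the \emph{classical} form of Cheng's cone condition, where one wants a genuine solid cone of quantified radius inscribed in the nodal domain; it is both harder than what is needed here and left at the level of a sketch. Under the paper's definition you can simply delete that paragraph.
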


Furthermore, in dim $2$, the nodal lines form an equiangular system at a singular point of the nodal set.

Setting $\dim M \geq 3$, we discuss the question whether at the singular points of the nodal set $N_\varphi$, the nodal set can have arbitrarily small opening angles, or even ``cusp''-like situations, or the nodal set has to self-intersect ``sufficiently transversally''.
We observe that in dimensions $n \geq 3$ the nodal sets satisfies an appropriate ``interior cone condition'', and give an estimate on the opening angle of such a cone in terms of the eigenvalue $\lambda$.

Now, in order to properly state or interpret such a result, one needs to define the concept of ``opening angle'' in dimensions $n \geq 3$. We start by defining precisely the notion of tangent directions in our setting.
\begin{definition}
	Let $\Omega_\lambda$ be a nodal domain and $x \in \pa \Omega_\lambda$, which means that $\varphi_\lambda(x) = 0$. Consider a sequence $x_n \in N_\varphi$ such that $x_n \to x$. Let us assume that in normal coordinates around $x$, $x_n = \text{exp }(r_nv_n)$, where $r_n$ are positive real numbers, and $v_n \in S(T_xM)$, the unit sphere in $T_xM$. Then, we define the space of tangent directions at $x$, denoted by $\Cal{S}_xN_\varphi$ as
	\beq
	\Cal{S}_xN_\varphi = \{v \in S(T_xM) : v = \lim v_n, \text{  where  }x_n \in N_\varphi, x_n \to x\}.\eeq
\end{definition}

Observe that there are more well-studied variants of the above definition,
for example, as due to Clarke or Bouligand (for more details, see ~\cite{R}). With that in place, we now give the following definition of ``opening angle''.
\begin{definition}
	We say that the nodal set $N_\varphi$ satisfies an interior cone condition with opening angle $\alpha$ at $x \in N_\varphi$, if any connected component of $S(T_xM) \setminus \Cal{S}_xN_\varphi$ has an inscribed ball of radius $\gtrsim \alpha$.
\end{definition}

Now we have the following:

\begin{theorem}\label{ICC}
	When $\text{dim }M = 3$, the nodal set $N_\varphi$ satisfies an interior cone condition with angle $\gtrsim \frac{1}{\sqrt{\lambda}}$. When $\text{dim }M = 4$, $N_\varphi$ satisfies an interior cone condition with angle $\gtrsim \frac{1}{\lambda^{7/8}}$. Lastly, when $\text{dim }M \geq 5$, $N_\varphi$ satisfies an interior cone condition with angle $\gtrsim \frac{1}{\lambda}$.
\end{theorem}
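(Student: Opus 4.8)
The plan is to reduce the statement to a local question on a wavelength ball around a singular point $x \in N_\varphi$ and then exploit quantitative unique continuation / doubling estimates. First I would work in normal coordinates centered at $x$ and rescale by $\sqrt{\lambda}$, so that $\varphi_\lambda$ becomes (after the rescaling) a function $u$ on a ball of radius $\sim 1$ satisfying an elliptic equation with coefficients close to Euclidean and with controlled doubling index; the point is that on such a ball the behaviour of $u$ near $0$ is governed, up to small perturbations, by a homogeneous harmonic polynomial $P$ of degree $N$, where $N$ is essentially the local doubling index, and standard estimates (Donnelly–Fefferman, Han–Lin, Logunov) bound $N \lesssim \sqrt{\lambda}$. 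The nodal set of $u$ near $0$ is then a small perturbation of the nodal set of $P$, and the space of tangent directions $\Cal{S}_x N_\varphi$ is contained in (a neighbourhood of) the zero set of $P$ restricted to the unit sphere $S^{n-1}$.

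The key geometric input is therefore: \emph{the zero set of a homogeneous harmonic polynomial $P$ of degree $N$ on $\RR^n$ cannot be so dense on $S^{n-1}$ that every complementary component has inradius below a prescribed threshold}. More precisely, one wants that $S^{n-1} \setminus Z(P)$ has a component containing a ball of radius $\gtrsim N^{-\gamma(n)}$ for an appropriate exponent $\gamma(n)$. I would extract this from a volume/measure estimate: the tubular neighbourhood of $\{P = 0\}$ of radius $\rho$ inside $S^{n-1}$ has measure $\lesssim N \rho$ (this is the classical estimate on tube volumes around nodal sets of polynomials, essentially from the bound $\mathcal{H}^{n-2}(Z(P)\cap S^{n-1}) \lesssim N$ together with a coarea argument), while at the same time one needs a lower bound on how large a ball must fit in the complement. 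The dimension-dependence of the exponents ($1/\sqrt\lambda$ in $n=3$, $1/\lambda^{7/8}$ in $n=4$, $1/\lambda$ in $n\geq 5$) strongly suggests that in $n=3$ one uses that $Z(P)\cap S^2$ is a union of $\lesssim N$ curves (so complementary faces have inradius $\gtrsim 1/N \gtrsim 1/\sqrt\lambda$, matching Cheng exactly), in $n=4$ one interpolates, and for $n \geq 5$ the combinatorial geometry only gives the weaker $1/\lambda$; so I would treat $n=3$ by a direct length/area argument (closest to Cheng's original two-dimensional reasoning, applied to $S^2$), and $n \geq 4$ by the general polynomial tube estimate combined with a packing argument, optimising the trade-off between the perturbation scale and the doubling index.

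The main obstacle I anticipate is controlling the passage from "$u$ is close to a harmonic polynomial $P$" to "$\Cal{S}_x N_\varphi$ is close to $Z(P) \cap S^{n-1}$" in a way that survives taking limits of tangent directions: a priori the nodal set of $u$ could have thin tentacles, not visible at the level of $P$, that nonetheless contribute tangent directions at $x$; ruling these out requires a quantitative Harnack / gradient estimate showing that on the wavelength scale $\nabla \varphi_\lambda$ is comparable to the polynomial's gradient away from the critical set of $P$, which is exactly where Logunov's and Logunov–Malinnikova's local estimates (already invoked in the excerpt) enter. A secondary technical point is that the inscribed-ball radius must be measured with respect to the round metric on $S(T_xM)$ induced by $g$, not the Euclidean one, but since $g$ is $C^\infty$ this only costs a bounded factor and does not affect the exponents. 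Finally, once the local lower bound on the inradius of a complementary component of $Z(P)\cap S^{n-1}$ is in hand, one converts it back through the $\sqrt{\lambda}$-rescaling: an inradius of order $N^{-\gamma}$ at the rescaled level, with $N \lesssim \sqrt\lambda$, yields the claimed $\lambda^{-\gamma/2}$-type bounds after bookkeeping the dimension-dependent $\gamma$.
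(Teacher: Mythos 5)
Your high-level outline matches the paper's strategy — reduce to the leading homogeneous harmonic polynomial at a singular point, pass to its zero set on the unit sphere $S(T_{x_0}M)$, and invoke inradius lower bounds there — but there are two genuine gaps, one of which you correctly flag as an obstacle and one of which you don't.

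The obstacle you anticipate (that thin "tentacles" of the true nodal set, invisible at the level of the approximating polynomial, could contribute extra tangent directions) is real for an \emph{approximation}-based argument, but the paper avoids it entirely by using the exact Taylor expansion rather than a perturbative comparison. By Bers' scaling (Theorem~\ref{Zildo}) the leading term $\varphi_k$ is a Euclidean homogeneous harmonic polynomial of degree $k$, and for any sequence $x_n = \exp(r_n v_n) \in N_\varphi$ with $v_n \to v$ one simply writes $0 = \varphi_\lambda(x_n) = r_n^k \varphi_k(\exp v_n) + \sum_{m>k} r_n^m \varphi_m(\exp v_n)$, divides by $r_n^k$, and lets $n \to \infty$ to get $\varphi_k(\exp v)=0$. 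This yields the \emph{exact} containment $\Cal{S}_{x_0} N_\varphi \subseteq \Cal{S}_{x_0} N_{\varphi_k} = Z(\varphi_k)\cap S(T_{x_0}M)$, with no need for Harnack/gradient control or Logunov-type quantitative unique continuation. Your worry about thin tentacles is therefore moot: any tentacle generates a tangent direction, and that direction is forced to lie on $Z(\varphi_k)$ by the order-of-vanishing asymptotics alone. Without this observation your proof does not close, since you leave unresolved precisely the step you identify as the main obstacle.

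The second gap is in your route to the inscribed-ball estimate on the sphere. You propose to deduce a lower bound on the inradius of a complementary component of $Z(P)\cap S^{n-1}$ from a tube-volume bound $|\{ \mathrm{dist}(\cdot, Z(P)) < \rho\} \cap S^{n-1}| \lesssim N\rho$ plus a packing argument. A tube bound alone does not control the inradius of any \emph{single} complementary component — the complement of a thin tube can still be a union of many pieces, each with tiny inscribed ball. What the paper actually uses is that $\varphi_k|_{S^{n-1}}$ is a spherical harmonic with eigenvalue $\sim k^2 \lesssim \lambda$ (since the vanishing order $k$ is $\lesssim \sqrt{\lambda}$), so each complementary component of $\Cal{S}_{x_0}N_\varphi$ contains a nodal domain of this spherical harmonic, and one then quotes the known inradius lower bounds for nodal domains (Mangoubi's two-dimensional result for $S^2$ when $\dim M = 3$, and the higher-dimensional estimates of Mangoubi \cite{Man2} and Georgiev \cite{G} for $\dim M \geq 4$). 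These inradius theorems are the real source of the exponents $1/\sqrt\lambda$, $1/\lambda^{7/8}$, $1/\lambda$; they are not recoverable from a tube-measure bound and packing.
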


\subsubsection{Inscribed balls in a nodal domain}
Let us briefly overview some results related to the width of nodal domains.

Given a nodal domain $\Omega_\lambda$, the inradius $\text{inrad}(\Omega_\lambda)$ is the radius of the largest geodesic ball one can fully inscribe in $\Omega_\lambda$. Like the nodal set, it is also another natural object intrinsically tied to the nodal geometry of eigenfunctions, and it is natural to speculate about optimal inradius bounds. From considerations involving domain monotonicity, one can readily see that $\text{inrad}(\Omega_\lambda) \lesssim \frac{1}{\sqrt{\lambda}}$. As regards lower bounds, it was proved in \cite{Man1}, based on ideas in \cite{NPS}, that any closed Riemannian surface satisfies $\text{inrad}(\Omega_\lambda) \gtrsim \frac{1}{\sqrt{\lambda}}$. Moreover, Mangoubi showed (using complex analytic techniques) that such a ball can be centered at any point of maximum of the eigenfunction $\varphi_\lambda$ inside $\Omega_\lambda$. 

In higher dimensions, nodal domains, particularly with regard to their inradius estimates, appear to be sensitive objects. A heuristic explanation is the following (also see ~\cite{H}). In dimensions $n \geq 3$, a curve has zero capacity, which means that there is virtually no difference in the first Dirichlet eigenvalue of a domain $\Omega$ and $\Omega \setminus \Gamma$, where $\Gamma \subset \Omega$ is a reasonably well-behaved curve. But deletion of a curve (or even a single point) can affect the inradius drastically. Still, in dimension $n \geq 3$, Mangoubi was able to show (\cite{Man2}) that every nodal domain $\Omega_\lambda$ satisfies $\text{inrad}(\Omega_\lambda) \gtrsim \frac{1}{\lambda^{\frac{n - 1}{4} + \frac{1}{2n}}}$. His arguments relied on certain ``asymmetry'' results that we briefly discuss below in Subsection \ref{subsubsec:Asymmetry}. Further, in \cite{GM}, we were able to recover the same bounds, but with the additional information that any such ball of radius $\sim \frac{1}{\lambda^{\frac{n - 1}{4} + \frac{1}{2n}}}$ can be centered at a point of maximum of $\varphi_\lambda$ inside $\Omega_\lambda$. This was derived as a Corollary of a quantitative improvement of a Lieb-type result regarding the ``almost inscribedness'' of a wavelength radius ball $B(x_0, \frac{r_0}{\sqrt{\lambda}})$ inside $\Omega_\lambda$ (for a formal statement, see Theorem \ref{alfulins} below). Moreover, as regards the inner radius of nodal domains in dimensions at least $3$, it seems that the lower bound is still not sharp. It is believed that the inner radius should be much more closer to the wavelength scale.

Now, having this discussion in mind, we also recall the following observation: 
\begin{clm} \label{cl:large-ball}
If for a point $x_0 \in M$ we know that $ |\varphi_\lambda (x_0)| \geq  \beta\V \varphi_\lambda\V_{L^\infty(M)}$, where $\beta$ is a constant independent of $\lambda$, then there exists a ball of radius $ \sim 1/\sqrt{\lambda}$ centered at $x_0$ where $\varphi_\lambda$ does not change sign.
\end{clm}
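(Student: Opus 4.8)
The plan is to reduce the claim to a standard elliptic comparison/Harnack-type argument on a wavelength ball, using the fact that $|\varphi_\lambda(x_0)|$ is a definite fraction of the global sup. First I would pass to normal coordinates centered at $x_0$ and rescale by $\sqrt{\lambda}$: setting $u(y) = \varphi_\lambda(\exp_{x_0}(y/\sqrt{\lambda}))$, the function $u$ solves an elliptic equation $L u = -u$ on a ball of radius $\sim r_0$ (with $r_0$ chosen small, depending only on $(M,g)$), where $L$ is a second-order operator with coefficients uniformly close to the Euclidean Laplacian; crucially the zeroth-order term and the coefficients are controlled independently of $\lambda$. Without loss of generality assume $\varphi_\lambda(x_0) > 0$, so after normalizing $\|\varphi_\lambda\|_{L^\infty(M)} = 1$ we have $u(0) = \varphi_\lambda(x_0) \geq \beta$ and $|u| \leq 1$ on the rescaled ball.

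Next I would invoke interior elliptic gradient estimates (Schauder or $W^{2,p}$ estimates applied to $Lu = -u$ on $B(0, r_0)$, with right-hand side bounded by $1$ and boundary data bounded by $1$) to conclude that $\|\nabla u\|_{L^\infty(B(0, r_0/2))} \leq C$, for a constant $C = C(M,g)$ independent of $\lambda$. Then on the smaller ball $B(0, \rho)$ with $\rho := \min\{r_0/2, \beta/(2C)\}$, we have $u(y) \geq u(0) - C|y| \geq \beta - C\rho \geq \beta/2 > 0$. Undoing the rescaling, $\varphi_\lambda$ is strictly positive on the geodesic ball $B(x_0, \rho/\sqrt{\lambda})$, which is a ball of radius $\sim 1/\sqrt{\lambda}$ since $\rho$ depends only on $\beta$ and $(M,g)$ and not on $\lambda$. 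This yields the claim.

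The main technical point to be careful about is the uniformity of the elliptic estimates in $\lambda$: one must check that after the $\sqrt{\lambda}$-rescaling, the metric coefficients $g_{ij}(x_0 + y/\sqrt{\lambda})$ and their derivatives up to the needed order are bounded uniformly over all $x_0 \in M$ and all $\lambda$ large — this follows from compactness of $M$ and smoothness of $g$, since on a fixed-size coordinate ball the rescaled metric converges (in $C^k$) to the flat metric as $\lambda \to \infty$, so in particular stays in a fixed bounded set. One should also note that the rescaled equation has the form $\operatorname{div}(a^{ij}\partial_j u) + b^i \partial_i u = -u$ with $a^{ij}$ uniformly elliptic and $b^i$ of size $O(1/\sqrt{\lambda}) = O(1)$; this is exactly the regime where interior Schauder theory applies with constants depending only on the ellipticity bounds and $C^\alpha$ norms of the coefficients. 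An alternative to the gradient-estimate route is a direct Harnack inequality for $u + 1 \geq 0$ (or a barrier comparison using the first Dirichlet eigenfunction of a slightly larger ball), but the gradient estimate is the cleanest and makes the ``does not change sign'' conclusion immediate rather than merely a lower bound on the positive part.
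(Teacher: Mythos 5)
Your argument is correct and is exactly the route the paper intends: the paper dispatches this claim in one line by appealing to elliptic bounds on $|\nabla\varphi_\lambda|$, and your rescaled gradient estimate $\|\nabla u\|_{L^\infty}\leq C$ on the wavelength ball (equivalently $\|\nabla\varphi_\lambda\|_{L^\infty}\lesssim \sqrt{\lambda}\,\|\varphi_\lambda\|_{L^\infty}$) followed by the observation that $\varphi_\lambda$ cannot drop from $\beta\|\varphi_\lambda\|_{L^\infty}$ to zero in a distance shorter than $\sim\beta/(C\sqrt{\lambda})$ is precisely the fleshed-out version of that remark. No gaps.
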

In other words, there exists a fully inscribed ball of wavelength size centered at $x_0$. This claim follows directly from elliptic bounds on the gradient $|\nabla \varphi_\lambda|$.

We address the question to seek quantitative generalizations of this fact under more relaxed lower bounds on $|\varphi_\lambda(x_0)|$. Theorem \ref{newres} below may be seen as one such quantitative generalization. 
Due to Donnelly-Fefferman (~\cite{DF}), on any wavelength radius geodesic ball $B(x, \frac{1}{\sqrt{\lambda}})$ in a closed Riemannian manifold $M$ with smooth metric, we have that $\sup_{B(x, \frac{1}{\sqrt{\lambda}})} |\varphi_\lambda| \gtrsim e^{-C\sqrt{\lambda}}\V \varphi_\lambda\V_{L^\infty(M)}$. Moreover, it is also true that the exponential bounds given by ~\cite{DF} are rarely saturated (one of the rare counterexamples are Gaussian beams of highest weight spherical harmonics), and in most practical examples, much better bounds hold. Motivated by this, we investigate bounds on the size of inscribed balls which are centered at points $x_0$ for which  $|\varphi_\lambda(x_0)|$ is at most ``exponentially'' small.

We have the following observation:

\begin{theorem}\label{newres}
	Let $M$ be a closed Riemannian manifold of dimension $n \geq 3$ with smooth metric. Further, let $x_0 \in \Omega_\lambda$ be such that $\varphi_\lambda (x_0) = \V \varphi\V_{L^\infty(\Omega_\lambda)}$. 
	Suppose that
	\beq\label{cond}
	\varphi_\lambda(x_0) \geq 2^{-1/\eta}\V \varphi_\lambda\V_{L^\infty(M)}, \eeq
	where $\eta > 0$ is smaller than a fixed constant $\eta_0$ ($\eta$ may be dependent on $\lambda$). Then there exists an inscribed ball $B(x_0, \rho) \subseteq \Omega_\lambda$ of radius
\beq\label{rad}	
	 \rho \gtrsim
	\text{max}\left( \frac{\eta^{\beta (n)}}{\sqrt{\lambda}}, \frac{1}{\lambda^{\alpha (n)}}\right),
	\eeq
	where $\beta (n) = \frac{(n - 1)(n - 2)}{2n}, \alpha (n) = \frac{n - 1}{4} + \frac{1}{2n}$. Furthermore, such a ball can be centered around any such max point $x_0$.
\end{theorem}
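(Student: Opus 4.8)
The strategy is to establish the two lower bounds in \eqref{rad} independently and then keep the larger one. The bound $\rho\gtrsim\lambda^{-\alpha(n)}$ is already available: it is Mangoubi's inradius estimate \cite{Man2}, in the sharpened form of \cite{GM} (a consequence of Theorem \ref{alfulins}) which permits centering the inscribed ball at any maximum point of $\varphi_\lambda$ inside $\Omega_\lambda$; since $x_0$ is such a point by hypothesis, this already yields a ball $B(x_0,\rho)\subseteq\Omega_\lambda$ with $\rho\gtrsim\lambda^{-\alpha(n)}$. So the real task is the complementary estimate $\rho\gtrsim\eta^{\beta(n)}/\sqrt\lambda$. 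The underlying idea is that condition \eqref{cond} forces the growth of $\varphi_\lambda$ on a wavelength ball about $x_0$ to be mild, and that mild growth (i.e.\ a small doubling index) feeds into the same machinery of \cite{Man2}, \cite{GM} to produce a correspondingly large inscribed ball.

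Concretely, fix a small $r_0=r_0(M,g)$ and rescale the wavelength ball $B:=B(x_0,r_0/\sqrt\lambda)$ to the unit ball $B_1\subset\RR^n$: in geodesic normal coordinates about $x_0$ put $u(y):=\varphi_\lambda\big(\exp_{x_0}(r_0 y/\sqrt\lambda)\big)/\varphi_\lambda(x_0)$, so that $u(0)=1$ and $u$ solves $\Delta_{\tilde g}u+r_0^2u=0$ on $B_1$, with $\tilde g$ as $C^\infty$-close to the Euclidean metric as we wish, uniformly in $\lambda$. Two consequences of the hypotheses are immediate. First, since $B\subseteq M$, condition \eqref{cond} gives $\sup_{B_1}|u|\le\|\varphi_\lambda\|_{L^\infty(M)}/\varphi_\lambda(x_0)\le 2^{1/\eta}$, so the doubling index of $u$ on $B_1$ is at most $\log\!\big(\sup_{B_1}|u|\big)\le(\log 2)/\eta\lesssim 1/\eta$; by the almost-monotonicity of the frequency function for solutions of such equations, this bound persists at all smaller scales inside $B_{1/2}$. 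Second, since $x_0$ is a maximum point of $\varphi_\lambda$ over all of $\Omega_\lambda$ and the connected component $\omega_0\ni 0$ of $\{u>0\}\cap B_1$ is carried by the rescaling into $\Omega_\lambda$, we have $u\le u(0)$ on $\omega_0$; that is, $0$ is a maximum point of $u$ on its own positivity component.

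Now we feed $u$, $\omega_0$, and the doubling bound $N\lesssim1/\eta$ into the inradius argument of \cite{Man2}, refined in \cite{GM} to center the ball at the maximum point. That argument produces, from a solution of an equation of the above type on $B_1$ whose positivity component through the origin has doubling index at most $N$ and attains its maximum there, an inscribed ball centered at the origin of radius $\gtrsim N^{-\beta(n)}$. Indeed this is exactly the estimate that yields $\lambda^{-\alpha(n)}$ once one inserts the universally valid bound $N\lesssim\sqrt\lambda$, the exponents matching because $\tfrac12+\tfrac12\beta(n)=\alpha(n)$. Applying it here with $N\lesssim1/\eta$ and undoing the rescaling, which multiplies radii by $r_0/\sqrt\lambda$, produces $B(x_0,\rho)\subseteq\Omega_\lambda$ with $\rho\gtrsim r_0\,\eta^{\beta(n)}/\sqrt\lambda$, centered at $x_0$. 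Keeping the larger of this and the bound from the first paragraph finishes the argument.

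The main difficulty is this last step. Mangoubi's inradius estimate was written with the doubling index $\sim\sqrt\lambda$ of the eigenfunction in mind, so one must revisit the underlying asymmetry estimates and the induction on dimension and verify that the dependence on the doubling index $N$ is explicit, uniform, and comes out with exactly the exponent $\beta(n)=\tfrac{(n-1)(n-2)}{2n}$; one must also check that the \cite{GM} refinement permitting the ball to be centered at a maximum point is compatible with this bookkeeping, and that $\eta_0$ can be chosen so that $N^{-\beta(n)}$ is small enough for the construction to take place strictly inside $B_{1/2}$. The remaining points — the passage from $\varphi_\lambda$ to the almost-harmonic rescaling $u$, and the almost-monotonicity of the frequency — are standard.
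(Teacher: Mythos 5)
Your architecture is right and the exponent bookkeeping is correct: the bound $\rho \gtrsim \lambda^{-\alpha(n)}$ is indeed quoted from \cite{Man2}/\cite{GM}, the identity $\alpha(n)=\tfrac12+\tfrac12\beta(n)$ is exactly the right heuristic for where $\beta(n)$ must come from, and the observation that (\ref{cond}) forces the growth exponent of $\varphi_\lambda$ at $x_0$ to be $\lesssim 1/\eta$ at every scale (since $\sup_{B}|\varphi_\lambda|\le\Vert\varphi_\lambda\Vert_{L^\infty(M)}\le 2^{1/\eta}\varphi_\lambda(x_0)\le 2^{1/\eta}\sup_{\frac12 B}|\varphi_\lambda|$ for any ball $B$ centered at $x_0$) is the correct and essentially free starting point. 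The problem is that your entire quantitative content rests on the assertion that the Mangoubi/GM machinery, run with doubling index $N$ in place of $\sqrt\lambda$, returns an inscribed ball of radius $N^{-\beta(n)}$ times the wavelength centered at the max point. You flag this yourself as ``the main difficulty'' and do not carry it out; since this is where all of the work of the theorem lives, what you have is a correct plan rather than a proof. (A smaller issue: Theorem \ref{alfulins} is stated for eigenfunctions and their nodal domains on $M$, not for the rescaled almost-harmonic $u$ on $B_1$, so the cleaner implementation avoids your rescaling and works directly with $\varphi_\lambda$ on a ball of radius $r_0/\sqrt\lambda$ with $r_0$ small and $\eta$-dependent.)

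For comparison, here is how the paper fills that step, and it is close to but not identical with what you envisage. One sets $\epsilon_0\sim\eta^{n-1}$ and $r_0=C\epsilon_0^{(n-2)/(2n)}\sim\eta^{\beta(n)}$, so that by Theorem \ref{alfulins} and (\ref{rel}) the ball $B=B(x_0,r_0/\sqrt\lambda)$ satisfies $|B\setminus\Omega_\lambda|\le\epsilon_0|B|$; thus $\beta(n)$ arises as the composition $(\eta^{n-1})^{(n-2)/(2n)}$, not as an exponent extracted from the asymmetry induction. Assuming $\varphi_\lambda$ vanishes in $\frac14B$, the paper does \emph{not} feed the growth exponent $1/\eta$ into the asymmetry estimate; instead it upgrades to a \emph{constant} doubling exponent on $\frac18B$ by combining the rapid-growth-in-narrow-domains theorem (Theorem \ref{rapgrow}) — applicable precisely because the spikes have volume fraction $\le(\eta C')^{n-1}$ — with a mean-value/local-maximum-principle comparison of $\varphi_\lambda^{\pm}$, and only then applies (\ref{man}) to get a constant negativity fraction contradicting $\epsilon_0$ small. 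Your more direct variant (asymmetry with $\langle\beta\rangle\lesssim1/\eta$, giving negativity fraction $\gtrsim\eta^{n-1}$, against the error budget $\epsilon_0$) is viable and leads to the same constraint $\epsilon_0\lesssim\eta^{n-1}$ and hence the same $r_0\sim\eta^{\beta(n)}$; but to make it a proof you must actually write down (i) the relation (\ref{rel}) between $\epsilon_0$ and $r_0$, and (ii) the application of (\ref{man}) on a ball centered at the putative zero inside $\frac14B$, with the constants checked against $\epsilon_0$ — rather than invoking \cite{Man2}/\cite{GM} as an $N$-explicit black box, which they are not.
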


In particular, Theorem \ref{newres} implies the following remark  (cf. Claim \ref{cl:large-ball}):
\begin{corollary}
	If for $x_0$ as above, one has that $ |\varphi_\lambda (x_0)| \gtrsim e^{-\lambda^\mu}  \V \varphi_\lambda\V_{L^\infty(M)}$, where $\mu := 2n\nu/((n-1)(n-2)), \nu > 0$ , then there exists a ball of radius $ \sim \frac{1}{\lambda^{1/2 + \nu}}$ centered at $x_0$ where $\varphi_\lambda$ does not change sign.
\end{corollary}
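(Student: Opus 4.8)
The plan is to derive this as a direct substitution into Theorem~\ref{newres}. First I would match the hypothesis: we are given $|\varphi_\lambda(x_0)| \gtrsim e^{-\lambda^\mu} \V\varphi_\lambda\V_{L^\infty(M)}$ with $\mu = 2n\nu/((n-1)(n-2))$, and we want to write this in the form $(\ref{cond})$, i.e. $\varphi_\lambda(x_0) \geq 2^{-1/\eta}\V\varphi_\lambda\V_{L^\infty(M)}$ for an appropriate choice of $\eta = \eta(\lambda)$. Solving $2^{-1/\eta} \sim e^{-\lambda^\mu}$ gives $1/\eta \sim \lambda^\mu \log 2$, hence $\eta \sim \lambda^{-\mu}$ (up to a constant, which is harmless since $\eta$ is allowed to depend on $\lambda$ and we only need $\eta < \eta_0$, valid for $\lambda$ large). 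One should note here that $x_0$ being a point where $\varphi_\lambda(x_0) = \V\varphi\V_{L^\infty(\Omega_\lambda)}$ is exactly the setup of Theorem~\ref{newres}; WLOG $\varphi_\lambda > 0$ on $\Omega_\lambda$, so $|\varphi_\lambda(x_0)| = \varphi_\lambda(x_0)$ and the absolute value is cosmetic.

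Next I would plug this $\eta$ into the radius bound $(\ref{rad})$. The first term in the max becomes
\[
\frac{\eta^{\beta(n)}}{\sqrt{\lambda}} \sim \frac{\lambda^{-\mu\beta(n)}}{\sqrt{\lambda}} = \lambda^{-1/2 - \mu\beta(n)}.
\]
Now $\beta(n) = \frac{(n-1)(n-2)}{2n}$, so $\mu\beta(n) = \frac{2n\nu}{(n-1)(n-2)} \cdot \frac{(n-1)(n-2)}{2n} = \nu$. Hence the first term is $\sim \lambda^{-1/2 - \nu} = \frac{1}{\lambda^{1/2+\nu}}$, which is precisely the claimed radius. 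It remains to check that this term dominates the second term $\lambda^{-\alpha(n)}$ in the max, i.e. that $\frac{1}{\lambda^{1/2+\nu}} \gtrsim \frac{1}{\lambda^{\alpha(n)}}$; since $\alpha(n) = \frac{n-1}{4} + \frac{1}{2n} \geq \frac12 + \frac14 > \frac12$ for $n \geq 3$, this holds exactly when $\nu \leq \alpha(n) - \frac12$. For larger $\nu$ one simply gets the (better, i.e.\ not smaller) bound $\lambda^{-\alpha(n)}$ from the second term, which is at least as large as $\lambda^{-1/2-\nu}$, so the conclusion ``there is a ball of radius $\sim \lambda^{-1/2-\nu}$'' still holds a fortiori — one can always inscribe a ball of a radius smaller than the inradius. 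Theorem~\ref{newres} then delivers an inscribed ball $B(x_0,\rho) \subseteq \Omega_\lambda$ with $\rho \gtrsim \lambda^{-1/2-\nu}$, and on $\Omega_\lambda$ the eigenfunction does not change sign, which is the assertion.

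There is essentially no obstacle here: the corollary is a formal consequence obtained by choosing the free parameter $\eta$ to absorb the exponential factor. The only mild points to be careful about are (i) the constant discrepancy between $2^{-1/\eta}$ and $e^{-\lambda^\mu}$, absorbed into the $\gtrsim$ and the choice of $\eta$ up to a multiplicative constant, together with checking $\eta < \eta_0$ for $\lambda$ large; and (ii) the bookkeeping that the exponent arithmetic $\mu\beta(n) = \nu$ works out exactly, which is precisely how $\mu$ was defined. I would present the computation in two or three lines and remark that the definition of $\mu$ is reverse-engineered from this identity.
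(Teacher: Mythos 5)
Your derivation is correct and is exactly the intended argument: the paper presents the corollary as an immediate consequence of Theorem \ref{newres} obtained by choosing $\eta \sim \lambda^{-\mu}$ so that $2^{-1/\eta}$ matches the exponential hypothesis, and the identity $\mu\,\beta(n)=\nu$ is precisely what the definition of $\mu$ is designed to produce. Your additional remarks on absorbing constants, checking $\eta<\eta_0$ for large $\lambda$, and the comparison with the $\lambda^{-\alpha(n)}$ term are all sound and consistent with the paper.
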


The proof of Theorem \ref{newres} is based on a combination of Mangoubi's result on rapid growth in narrow domains (reproduced below as Theorem \ref{rapgrow}), and Theorem 1.3 of \cite{GM} (reproduced below as Theorem \ref{alfulins}). 

\subsection{Acknowledgements} We would especially like to thank Alexander Logunov for reading a draft version of this paper, and for advice regarding  the modification of Proposition 6.4 of \cite{L1} which allowed us to improve the bound in (\ref{low}) from $\lambda^{1/4}\delta$ to $\lambda^{1/2 - \epsilon}\delta$. We also thank Werner Ballmann, Fanghua Lin, Henrik Matthiesen and Stefan Steinerberger for helpful comments. Finally, we gratefully acknowledge the Max Planck Institute for Mathematics, Bonn for providing ideal working conditions.

\section{Auxiliary results about the frequency function and doubling exponents}\label{subsec:Prelim}

We start by recalling some general preliminaries.

Let $B_1$ denote the unit ball in $\RR^n$, and let $\varphi$ satisfy
 \beq\label{ellip}
 L\varphi = 0
 \eeq
 on $B_1$, where $L$ is a second order elliptic operator with smooth coefficients. $L$ is of the form
 \[
 L u = L_1u  - \varepsilon qu,
 \]
 where
 \[
 L_1u = -\pa_i(a^{ij}\pa_j u).\]
 and we make the following assumptions:\newline
(a) $a^{ij}$ is symmetric and satisfies the ellipticity bounds
 \[
 \kappa_1|\xi|^2 \leq a^{ij}\xi_i\xi_j \leq \kappa_2|\xi|^2.
 \]
 (b) $a^{ij}$, $q$ are bounded by $\V a^{ij}\V_{C^1(\overline{B_1})} \leq K, |q| \leq K$, and we assume that $\varepsilon < \varepsilon_0$, with $\varepsilon_0$ small.

Next, we recall and collect a few relevant facts about doubling exponents and different notions of frequency functions - these include scaling and monotonicity results.

For $\varphi$ satisfying (\ref{ellip}) in $B_1$, define for $r < 1$ the following $r$-growth exponent:
\beq\label{grow}
\beta_r(\varphi) = \text{log }\frac{\sup_{B_1}|\varphi|}{\sup_{B_r}|\varphi|},\eeq

A fundamental result of ~\cite{DF} says the following: 
\begin{theorem}\label{thm:DF-bounds}
	There exist constants $ C = C(M, g) > 0 $ and $ r_0(M, g) > 0 $ such that for every point $ p $ in $ M $ and every $ 0 < r < r_0 $ the following growth exponent holds:
	\begin{equation}
		\sup_{B(p, r)} |\varphi_\lambda| \leq  \left( \frac{r}{r'}\right)^{C\sqrt{\lambda}} \sup_{B(p, r')} |\varphi_\lambda|, \quad 0 < r' < r.
	\end{equation}
\end{theorem}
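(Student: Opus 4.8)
The plan is to prove Theorem~\ref{thm:DF-bounds} by reducing the global statement on $(M,g)$ to a local statement on a Euclidean ball, where one can invoke the monotonicity of the Almgren--Agmon frequency function associated to the operator $L$ introduced above. First I would fix a point $p \in M$ and choose $r_0 = r_0(M,g)$ small enough that the geodesic ball $B(p, 2r_0)$ is contained in a single coordinate chart in which the metric $g$ is uniformly comparable to the Euclidean metric and has $C^1$-controlled coefficients; rescaling, I may assume we work on the unit ball $B_1 \subset \RR^n$. On such a chart the eigenfunction equation $-\Delta \varphi_\lambda = \lambda \varphi_\lambda$ becomes $L\varphi = 0$ with $L u = -\partial_i(a^{ij}\partial_j u) - \varepsilon q u$ in the divergence form stated above, where the relevant point is that the zeroth-order coefficient carries the factor $\lambda$: after the natural rescaling one has $\varepsilon q \sim \lambda$ up to bounded geometric factors, so all frequency/doubling quantities will be controlled by $C\sqrt{\lambda}$ rather than an absolute constant.

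The core step is the frequency function argument. For $\varphi$ solving $L\varphi = 0$ on $B_1$, define (with respect to a suitable conformally adapted metric so that the operator is in ``harmonic'' form near the center) the height and Dirichlet-energy integrals
\beq
H(r) = \int_{\partial B_r} \mu \varphi^2, \qquad D(r) = \int_{B_r} a^{ij}\partial_i\varphi\,\partial_j\varphi + \text{(lower order terms)},
\eeq
and the frequency $N(r) = rD(r)/H(r)$. The standard computation — differentiating $\log H(r)$ and $\log D(r)$ and using the equation together with a Rellich--Necas identity — shows that $e^{Cr}N(r)$ is monotone nondecreasing, with the constant $C$ absorbing the $C^1$-norms of $a^{ij}$ and a term of size $\varepsilon_0$; crucially, since here $\varepsilon q$ scales like $\lambda$, one gets $N(r) \lesssim \sqrt{\lambda}$ on, say, $B_{1/2}$, once one knows an a priori doubling bound at a fixed scale (which itself follows from the global $L^2$ theory of eigenfunctions, e.g. the bound $\sup_{B(p,r_0)}|\varphi_\lambda| \lesssim e^{C\sqrt\lambda}\|\varphi_\lambda\|_{L^2(B(p,2r_0))}$ proved via Carleman estimates in \cite{DF}). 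Integrating the differential inequality $\frac{d}{dr}\log H(r) \sim \frac{2N(r)}{r}$ from $r'$ to $r$ yields $H(r)/H(r') \leq (r/r')^{C\sqrt{\lambda}}$, and then converting the $L^2$-sphere quantity $H$ to the $L^\infty$ quantity $\sup_{B(p,r)}|\varphi_\lambda|$ by standard elliptic estimates (local boundedness / De Giorgi--Nash--Moser, with the extra $e^{C\sqrt\lambda}$ factors harmlessly absorbed into the exponent) gives exactly the claimed inequality $\sup_{B(p,r)}|\varphi_\lambda| \leq (r/r')^{C\sqrt{\lambda}}\sup_{B(p,r')}|\varphi_\lambda|$ for $0 < r' < r < r_0$.

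The main obstacle I anticipate is bookkeeping the dependence on $\lambda$ consistently through every step: one must verify that the perturbative lower-order term $\varepsilon q u$ contributes to the frequency bound precisely at order $\sqrt{\lambda}$ and not worse, that the passage between $L^2$-averages and sup-norms costs only a multiplicative $e^{C\sqrt\lambda}$ (equivalently, an additive $C\sqrt\lambda$ in the exponent comparing $r$ to $r'$, which is the same order and hence absorbable), and that the a priori bound needed to start the monotonicity argument is available uniformly in $p$. A secondary technical point is choosing the conformal change of the metric near $p$ so that the operator takes the form for which the Rellich identity closes cleanly; this is routine but must be done before the frequency computation. Once these dependences are tracked, the inequality follows, and this is the form in which it is used (e.g.\ to pass between $\beta_r(\varphi)$ at different scales) in the remainder of the paper.
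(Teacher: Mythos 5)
The paper does not prove this statement at all: Theorem~\ref{thm:DF-bounds} is quoted verbatim as ``a fundamental result of \cite{DF}'' and used as a black box, so there is no internal proof to compare against. Your frequency-function sketch is the standard modern alternative to the Carleman-estimate argument of Donnelly--Fefferman, and it is the right circle of ideas; note, however, that your reliance on the fixed-scale bound $\sup_{B(p,r_0)}|\varphi_\lambda|\lesssim e^{C\sqrt{\lambda}}\|\varphi_\lambda\|_{L^2(B(p,2r_0))}$ ``proved via Carleman estimates in \cite{DF}'' means your argument is a reduction to another theorem of the very paper being cited rather than an independent proof --- which is acceptable as a sketch, but you should be explicit that this seed estimate is where all the global information (compactness of $M$, self-adjointness) enters, since no purely local argument can produce the factor $\sqrt{\lambda}$.

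The genuine gap is in your treatment of the zeroth-order term. You write the equation as $L\varphi = L_1\varphi - \varepsilon q\varphi = 0$ with $\varepsilon < \varepsilon_0$ small and claim the monotonicity constant ``absorbs a term of size $\varepsilon_0$,'' while simultaneously asserting that ``$\varepsilon q \sim \lambda$ after the natural rescaling.'' These two statements are incompatible on the fixed-scale ball $B(p,r_0)$: the perturbative almost-monotonicity of $\tilde N(a,r)$ for $L_1 - \varepsilon q$ with $\lambda$-independent constants (as in \cite{GL1}, \cite{HL}) requires the rescaled zeroth-order coefficient to be small, which for $-\Delta\varphi_\lambda = \lambda\varphi_\lambda$ forces you down to wavelength scale $r \lesssim 1/\sqrt{\lambda}$. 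Run at that scale, your argument only yields the growth bound for $0 < r' < r \lesssim 1/\sqrt{\lambda}$, not for all $r < r_0(M,g)$ as the theorem asserts; run at scale $r_0$, the monotonicity constant degenerates with $\lambda$ and the integration of $\frac{d}{dr}\log H$ no longer closes with exponent $C\sqrt{\lambda}$. The standard repair --- and the one the paper itself uses in Section~\ref{subsec:Prelim} when it needs frequency bounds --- is to lift to the harmonic function $u(x,t) = e^{\sqrt{\lambda}t}\varphi_\lambda(x)$ on $M\times\RR$, where the frequency is genuinely almost-monotone with universal constants and the a priori bound $\tilde N(p,r) \le C\sqrt{\lambda}$ at a fixed scale follows from the explicit exponential factor (cf.\ Lemma~6 of \cite{BL}); the monotonicity then propagates this bound down to all scales $r' < r < r_0$, and the conversion from $H$ to sup-norms proceeds as you describe. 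Your proof should be restructured around that lift, or else restricted in its conclusion to wavelength balls.
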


In particular, for a rescaled eigenfunction $\varphi$, we have
\beq\label{df}
\frac{\beta_r(\varphi)}{\text{log}(1/r)} \lesssim \sqrt{\lambda}.
\eeq

In this context, we also recall the concept of frequency function (see ~\cite{GL1}, ~\cite{GL2}). For $u$ satisfying $L_1u = 0$ in $B_1$, define for $a \in B_1$, $r \in (0, 1]$ and $B(a, r) \subset B_1$,
\begin{align*}
D(a, r) & = \int_{B(a, r)}|\nabla u|^2dV, \\
H(a, r) & = \int_{\pa B(a, r)} u^2 dS.
\end{align*}

Then, define the generalized frequency of $\varphi$ by
\beq\label{def: freq-func}
\tilde{N}(a, r) = \frac{rD(a, r)}{H(a, r)}.
\eeq

We note that ~\cite{L1} and ~\cite{L2} use a variant of $\tilde{N}(a, r)$, defined as follows:
\beq\label{def: freq-Log}
N(a, r) = \frac{rH^{'}(a, r)}{2H(a, r)}.
\eeq

To pass between $\beta_r(\varphi), \tilde{N}(a, r)$ and $N(a, r)$, we record the following facts: from equation (3.1.22) of ~\cite{HL}, we have that
\beq\label{eq: HL}
H^{'}(a, r) = \left( \frac{n - 1}{r} + O(1)\right) H(a, r)  + 2D(a, r),
\eeq
where $O(1)$ is a function of geodesic polar coordinates $(r, \theta)$ bounded in absolute value by a constant $C$ independent of $r$.
More precisely, in \cite{HL} a certain normalizing factor $\mu$ is introduced in the integrand in the definitions of $H(a,r)$ and $D(a,r)$. As it turns out by the construction, $C_1 \leq \mu \leq C_2$  where $C_1, C_2$ depend on the ellipticity constants of the PDE, the dimension $n$ and a bound on the coefficients (cf. 3.1.11, \cite{HL}).

This gives us that when $\tilde{N}(a, r)$ is large, we have,
\beq \label{13}
N(a, r) \sim \tilde{N}(a, r).
\eeq

Also, it is clear from the proof of Remark 3.1.4 of ~\cite{HL} that
\beq\label{14}
\tilde{N}(a, r) \gtrsim \beta_r(\varphi).
\eeq

We also remark that the frequency $N(a, r)$ is almost-monotonic in the following sense: for any $\varepsilon > 0$, there exists $R > 0$ such that if $r_1 < r_2 < R$, then 
\beq\label{ineq:almost-mon}
N(a, r_1) \leq N(a, r_2)(1 + \varepsilon).
\eeq
This follows from (\ref{eq: HL}) above and standard properties of $\tilde{N}(a, r)$ derived in \cite{HL}.

 
 As regards growth exponents $\beta$, of particular importance to us is the so-called doubling exponent of $\varphi_\lambda$ at a point, which corresponds to the case $r' = \frac{1}{2}r$ in Theorem \ref{thm:DF-bounds}, and is defined as 
 \beq\label{def:doubling-exp}
 \Cal{N}(x, r) = \log \frac{\sup_{B(x, 2r} |\varphi_\lambda|}{\sup_{B(x, r} |\varphi_\lambda|}.
 \eeq

Now, consider an eigenfunction $\varphi_\lambda$ on $M$. We convert $\varphi_\lambda$ into a harmonic function in the following standard way. Let us consider the Riemannian product manifold $\bar{M} :=  M \times \mathbb{R}$ - a cylinder over $M$, equipped with the standard product metric $\bar{g}$. By a direct check, the function
\begin{equation}\label{eq:harm_func}
	u(x, t) := e^{\sqrt{\lambda} t} \varphi_\lambda (x)
\end{equation}
is harmonic.



Hence, by Theorem \ref{thm:DF-bounds}, the harmonic function $ u $ in (\ref{eq:harm_func}) has a doubling exponent which is also bounded by $ C \sqrt{\lambda} $ in balls whose radius is no greater than $ r_1= r_1(M,g) > 0$.

It is well-known that doubling conditions imply upper bounds on the frequency (cf. Lemma $ 6 $, \cite{BL}):

\begin{lemma} \label{lem:Frequency-Bound}
	For each point $p = (x, t) \in \bar{M} $ the harmonic function $ u(p) $ satisfies the following frequency bound:
	\begin{equation}
		\tilde{N}(p, r) \leq C \sqrt{\lambda},
	\end{equation}
	where $ r $ is any number in the interval $ (0, r_2), r_2 = r_2(M, g) $ and $ C > 0 $ is a fixed constant depending only on $ M, g $.
\end{lemma}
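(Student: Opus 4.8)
The plan is to pass through the associated harmonic function $u(x,t) = e^{\sqrt{\lambda}t}\varphi_\lambda(x)$ on the product cylinder $\bar{M} = M \times \RR$ and invoke the already-established doubling bound for $u$, then convert the doubling bound into a frequency bound via a standard argument (as in Lemma 6 of \cite{BL}). First I would recall that by Theorem \ref{thm:DF-bounds} applied on $\bar{M}$ (which has a smooth metric), the eigenfunction bound on $M$ lifts to a doubling bound for $u$: there are $C = C(M,g)$ and $r_1 = r_1(M,g) > 0$ such that $\Cal{N}_u(p, r) \leq C\sqrt{\lambda}$ for all $p \in \bar{M}$ and all $r \in (0, r_1)$, where $\Cal{N}_u$ denotes the doubling exponent of $u$ as in (\ref{def:doubling-exp}). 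This step is essentially quoted from the discussion immediately preceding the lemma.

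Next I would reduce from the $\sup$-based doubling exponent to the $L^2$-on-spheres quantity $H(p,r) = \int_{\pa B(p,r)} u^2\, dS$. Standard elliptic estimates (local boundedness and a Harnack-type comparison for the subsolution $u^2$, or equivalently the equivalence between $\sup_{B(p,r)}|u|$ and $(\fint_{\pa B(p,2r)} u^2\,dS)^{1/2}$ up to dimensional and ellipticity constants, together with the almost-monotonicity (\ref{ineq:almost-mon}) of the frequency) give that the $H$-doubling quantity $\log\frac{H(p,2r)}{H(p,r)}$ is also bounded by $C\sqrt{\lambda}$, after possibly shrinking $r_1$. Then, using the identity
\beq
\log\frac{H(p,2r)}{H(p,r)} = \int_r^{2r} \frac{H'(p,s)}{H(p,s)}\, ds = \int_r^{2r} \frac{2N(p,s)}{s}\, ds,
\eeq
together with the almost-monotonicity (\ref{ineq:almost-mon}) which controls $N(p,s)$ from below on $[r, 2r]$ by $N(p,r)/(1+\varepsilon)$, one obtains $2\log 2 \cdot N(p,r)/(1+\varepsilon) \lesssim \log\frac{H(p,2r)}{H(p,r)} \lesssim \sqrt{\lambda}$, hence $N(p,r) \lesssim \sqrt{\lambda}$. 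Finally, (\ref{13}) converts this to the bound on $\tilde{N}(p,r)$ claimed in the statement (for $\tilde N$ small the assertion is trivial, and for $\tilde N$ large we have $\tilde N \sim N$), after taking $r_2 \leq r_1$ small enough that all the cited monotonicity and comparison statements apply with a uniform constant.

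The main obstacle I anticipate is the passage from the $L^\infty$-doubling condition to the $L^2$-doubling condition for $H$ with constants that are genuinely uniform in $\lambda$ and independent of the point $p$: one must be careful that the elliptic comparison constants depend only on the ellipticity bounds, the dimension, and a $C^1$-bound on the coefficients of the Laplace-Beltrami operator on $\bar{M}$ (which are all $\lambda$-independent and, by compactness, uniform over $\bar{M}$ on wavelength-or-larger scales restricted to $t$ in a bounded window — and the $t$-translation invariance of $\bar{g}$ handles the non-compactness of the cylinder). Once that uniformity is secured, the remaining chain of inequalities is routine, being exactly the standard doubling-implies-frequency-bound mechanism recorded in \cite{BL} and \cite{HL}.
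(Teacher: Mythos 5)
Your proposal is correct and follows essentially the same route as the paper, which does not write out a proof but simply cites Lemma 6 of \cite{BL}: the lifted Donnelly--Fefferman doubling bound for $u$, the elliptic comparison between $\sup$-doubling and the $L^2$-on-spheres quantity $H$, the identity $\log\frac{H(p,2r)}{H(p,r)}=\int_r^{2r}\frac{2N(p,s)}{s}\,ds$ combined with almost-monotonicity, and finally the passage from $N$ to $\tilde N$ via (\ref{13})--(\ref{eq: HL}) is exactly the standard mechanism behind that citation. Your attention to the $\lambda$-independence and $p$-uniformity of the elliptic constants (using $t$-translation invariance of the cylinder metric) is the right point to flag, and nothing in your chain of inequalities fails.
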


For a proof of Lemma \ref{lem:Frequency-Bound} we refer to Lemma $ 6 $, \cite{BL}.

\section{Tubular neighbourhood of nodal set: Theorem \ref{JaMa}}\label{Sec:2}

\subsection{Idea of Proof}
We first focus on the proof of the lower bound. Since the proof is somewhat long and technical, we begin by giving a brief sketch of the overall idea of the proof.

It is well-known by a Harnack inequality argument (see ~\cite{Br} for example), that the nodal set of $ \varphi_\lambda $ is wavelength dense in $M$, which means that one can find $ \sim \lambda^{n/2}$ many disjoint balls $B^i_{\frac{r}{\sqrt{\lambda}}} := B(x_i, \frac{r}{\sqrt{\lambda}}) \subset M$ such that $\varphi_\lambda(x_i) = 0$. Now, to obtain a lower bound on $ |T_{\varphi, \delta}| $ we wish to estimate $ |T_{\varphi, \delta} \cap B(x_i, \frac{r}{\sqrt{\lambda}})|$ from below. The strategy is to consider separately those balls $B^i_{\frac{r}{\sqrt{\lambda}}}$ on which $\varphi_\lambda$ has controlled doubling exponent, which we deal with using the tools of \cite{DF, JM}, and those on which $\varphi_\lambda$ has high doubling exponent, for which we bring in the tools of \cite{L1, L2}. In other words we distinguish two options:

\begin{enumerate}

\item First, for a ball $B(x, \rho)$ of controlled doubling exponent (where $\rho \sim \frac{1}{\sqrt{\lambda}}$, and $\varphi_\lambda (x) = 0$), we show that

\beq\label{ineq:cont-grow}
\frac{|T_{\varphi, \delta} \cap B(x, \rho)|}{\rho^{n - 1}\delta} \geq c.
\eeq

To verify this, we essentially follow the argument of Jakobson and Mangoubi, \cite{JM} from the real-analytic case. The main observation is the fact that the volumes of positivity and negativity of $\varphi_\lambda$ inside such $B(x, \rho)$ are comparable. A further application of the Brunn-Minkowski inequality then yields (\ref{ineq:cont-grow}).

\item Now, to continue the idea of the proof, for a ball $B(x, \rho)$ of high doubling exponent $N$ (where $\rho \sim \frac{1}{\sqrt{\lambda}}$, and $\varphi_\lambda (x) = 0$), we prove that
\beq\label{ineq:uncont-grow}
\frac{|T_{\varphi, \delta} \cap B(x, \rho)|}{\rho^{n - 1}\delta} \geq \frac{1}{N^\varepsilon}, \text{ where } N \lesssim \sqrt{\lambda}.
\eeq

To prove this, we use the following sort of iteration procedure. Using the methods of Logunov, \cite{L1}, \cite{L2}, one first sees that in such a ball $B(x, \rho)$ of large doubling exponent one can find a large collection of smaller disjoint balls $ \{B^j\}_j$, whose centers are again zeros of $\varphi_\lambda$. We then focus on estimating $ |T_{\varphi, \delta} \cap B^j| $ and again distinguish the same two options - either the doubling exponent of $B^j$ is small, which brings us back to the previous case $(1)$ where we have appropriate estimates on the tube, or the doubling exponent of $B^j$ is large. Now, in case the doubling exponent of $B^j$ is large, we similarly discover another large subcollection of even smaller disjoint balls inside $B^j$, whose centers are zeros of $\varphi_\lambda$ and so forth.

We repeat this iteration either until the current small ball has a controlled doubling exponent, or until the current small ball is of radius comparable to the width $\delta$ of the tube $T_{\varphi, \delta} $. In both situations we have a lower estimate on the volume of the tube which brings us to $(\ref{ineq:uncont-grow})$.

\end{enumerate}

Once this is done, (\ref{low}) follows by adding (\ref{ineq:cont-grow}) and (\ref{ineq:uncont-grow}) over $\sim \lambda^{n/2}$ balls $B^i_{\frac{r}{\sqrt{\lambda}}}$, as mentioned above.

\begin{remark} We make a quick digression here and recall that in the real analytic setting, it is known that one can find $\sim \lambda^{n/2}$ many balls of wavelength (comparable) radius, as mentioned above, such that all of them have controlled doubling exponent - in other words, the first case above is the only one that needs to be considered. However, in the smooth setting, it is still a matter of investigation how large a proportion of the wavelength balls possesses controlled doubling exponent. For example, it is shown in ~\cite{CM}, that one can arrange that $\lambda^{\frac{n + 1}{4}}$ such balls possess controlled growth. More explicitly, the following question seems to be of interest and may also have substantial applications in the study of nodal geometry: given a closed smooth manifold $M$, how many disjoint balls $B(x^i_\lambda, \frac{r}{\sqrt{\lambda}})$ of controlled doubling exponent can one find inside $M$ such that $\varphi_\lambda(x^i_\lambda) = 0$, where $r$ is a suitably chosen constant depending only on the geometry of $(M, g)$?.
\end{remark}

The idea of proof of the upper bound (\ref{up}) is quite simple. We take a cube $Q$ inside $M$ of side-length $1$, say, and we chop it up into subcubes $Q_{k}$ of side-length $\delta$. Observe that due to Logunov's resolution of the Nadirashvili conjecture (\cite{L1}), for each subcube $Q_k$ which intersects the nodal set (which we call nodal subcubes following \cite{JM}), we have a local lower bound of the kind $\mathcal{H}^{n - 1}(N_\varphi \cap Q_k) \gtrsim \delta^{n - 1} $. Summing this up, we get an upper bound on the number of nodal subcubes, and in turn, an upper bound on the volume of all nodal subcubes in terms of $\Cal{H}^{n - 1}(N_\varphi)$. 
Now, since $T_{\varphi, \delta}$ is contained inside the union of all such nodal subcubes, combined with the upper bound on $\Cal{H}^{n - 1}(N_\varphi)$ due to \cite{L2}, we have (\ref{up}).

\subsection{Proof of Theorem \ref{JaMa}}

\begin{proof}[Proof of (\ref{low})]

We use the notation above and work in the product manifold $ \bar{M} $ with the harmonic function $u(x, t) = e^{\sqrt{\lambda} t} \varphi_\lambda (x)$. For the purpose of the proof of (\ref{low}), we will assume that $M$ is $n - 1$ dimensional. All this is strictly for notational convenience and ease of presentation, as we will now work with the tubular neighbourhood of $ u $, which then becomes $n$ dimensional. Considering the tubular neighbourhood of $u$ instead of $\varphi_\lambda$ does not create any problems because the nodal set of $ u $ is a product, i.e.
\begin{equation}
	\{ u = 0 \} = \{ \varphi_\lambda = 0 \} \times \mathbb{R}.
\end{equation}

As the tubular neighbourhoods we are considering are of at most wavelength radius and at the this scale the Riemannian metric is almost the Euclidean one, we have
\begin{equation}
	T_{u, \frac{\delta}{2}} \subseteq T_{\varphi, \delta} \times \mathbb{R}.
\end{equation}
Hence, to obtain a lower bound for $ |T_{\varphi, \delta}| $ it suffices to bound $ |T_{u, \frac{\delta}{2}}| $ below. To this end, we consider a strip $ S:=M \times [0, R_0] $ where $ R_0 > 0 $ is sufficiently large. 

We will obtain lower bound on $ |B_{\frac{r}{\sqrt{\lambda}}}^i (p_i) \cap T_{u, \frac{\delta}{2}}| $, which will give the analogous statements for (\ref{ineq:cont-grow}) and (\ref{ineq:uncont-grow}) for the function $u$. As mentioned before, depending on the doubling exponent of $ u $ in the ball $ B_{\frac{r}{\sqrt{\lambda}}}^i (p_i) $ we distinguish two cases, and we will prove that $\frac{|T_{u,  \frac{\delta}{2}} \cap B(x, \rho)|}{\rho^{n - 1}\delta} \geq c$ in the case of controlled doubling exponent, and $\frac{|T_{u,  \frac{\delta}{2}} \cap B(x, \rho)|}{\rho^{n - 1}\delta} \geq \frac{1}{N^\varepsilon}, \text{ where } N \lesssim \sqrt{\lambda}$ in the case of high doubling exponent.

{\bf	Case I : Controlled doubling exponent:}

	In the regime of controlled doubling exponent, in which case it is classically known that the nodal geometry is well-behaved, we essentially follow the proof in ~\cite{JM}. Let $B := B(p, \rho)$ be a ball such that $u(p) = 0$ and $u$ has bounded doubling exponent on $B(p, \rho)$, that is, $\frac{\sup_{B(p, 2\rho)}|u|}{\sup_{B(p, \rho)}|u|} \leq C$ (ultimately we will set $\rho \sim \frac{1}{\sqrt{\lambda}}$). Then, by symmetry results (see (\ref{man}) below), we have that $C_1 < \frac{|B^+|}{|B^-|} < C_2 $, where $B^+ = \{ u > 0\} \cap B, B^- = \{ u < 0\} \cap B$.

	Let  $\delta := \tilde{c} \rho$, where $\tilde{c}$ is a small constant to be selected later. Denoting by $B^+_\delta$ the $\delta$-neighbourhood of $B^+$, and similarly for $B^-$, and $2B:= B(p, 2\rho)$, we have that since $T_{u, \delta} \supset B^{+}_\delta\cap B^{-}_\delta$,
	
	\begin{equation}
		|T_{u, \delta} \cap 2B| \geq |B^+_\delta| + |B^-_\delta| - |B(p, \rho + \delta)|.
	\end{equation}

By the Brunn-Minkowski inequality, we see that $|B^+_\delta| \geq |B^+| + n\omega_n^{1/n}\delta|B^+|^{1 - 1/n}$, where $\omega_n$ is the volume of the $n$-dimensional unit ball. Setting $|B^+| = \alpha|B|, |B^-| = (1 - \alpha)|B|$, we have

\begin{equation} \label{bm}
	|T_{u, \delta} \cap 2B| \geq \omega_n\left(\rho^n - (\rho + \delta)^n + n\rho^{n - 1}\delta(\alpha^{1 - 1/n} + (1 - \alpha)^{1 - 1/n})\right).
\end{equation}

By asymmetry, $\alpha$ is bounded away from $0$ and $1$, meaning that $\alpha^{1 - 1/n} + (1 - \alpha)^{1 - 1/n} > 1 + C$. Now, taking $\tilde{c}$ small enough, the right hand side of (\ref{bm}) is actually $\gtrsim \rho^{n - 1}\delta$, giving us

\begin{lemma} \label{lem:Step-I}
	Let the tubular distance $ \delta $ and the radius of the ball $ \rho $ be in proportion $ \frac{\delta}{\rho} \leq \tilde{c} $ where $ \tilde{c} > 0$ is a small fixed number. Assume that the doubling index of $ u $ over the ball $ B_\rho $ is small. Then
	\begin{equation} \label{rho}
		|T_{u, \delta} \cap 2B| \gtrsim \rho^{n - 1}\delta.
	\end{equation}
\end{lemma}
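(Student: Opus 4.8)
The plan is to make precise and quantitative the heuristic already sketched in the surrounding text, and I would organize it in three steps.

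\textbf{Step 1: The asymmetry estimate.} First I would invoke the ``asymmetry'' result alluded to as equation (\ref{man}) below: for a harmonic function $u$ vanishing at the center $p$ of a ball $B = B(p,\rho)$ with \emph{bounded} doubling index, the volumes $|B^+|$ and $|B^-|$ of the positivity and negativity sets in $B$ are comparable to $|B|$, i.e. there are constants $0 < C_1 < C_2 < 1$ with $C_1 |B| \leq |B^\pm| \leq C_2 |B|$. Equivalently, writing $|B^+| = \alpha|B|$, the quantity $\alpha$ lies in a fixed closed subinterval of $(0,1)$ depending only on the doubling bound, $n$, and the ellipticity/coefficient bounds (this is where the hypothesis ``doubling index of $u$ over $B_\rho$ small'' enters, together with Lemma~\ref{lem:Frequency-Bound} ensuring the doubling bound passes to $u$). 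Since $\varphi_\lambda(p) = 0$ and the metric at wavelength scale is almost Euclidean, the Euclidean Brunn--Minkowski inequality applies up to harmless constants.

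\textbf{Step 2: Lower bound for the tube via Brunn--Minkowski.} Set $\delta = \tilde c \rho$ with $\tilde c$ to be chosen. The key geometric observation is that if a point lies within $\delta$ of $B^+$ \emph{and} within $\delta$ of $B^-$, then it lies within $\delta$ of the nodal set, so $B^+_\delta \cap B^-_\delta \subseteq T_{u,\delta}$; intersecting with $2B$ and using inclusion--exclusion inside the ball $B(p,\rho+\delta)$ gives
\[
|T_{u,\delta}\cap 2B| \;\geq\; |B^+_\delta| + |B^-_\delta| - |B(p,\rho+\delta)|.
\]
Applying Brunn--Minkowski in the form $|B^\pm_\delta| \geq |B^\pm| + n\omega_n^{1/n}\delta |B^\pm|^{1-1/n}$ and substituting $|B^\pm| = \alpha|B|, (1-\alpha)|B|$ yields exactly (\ref{bm}): the right-hand side is
\[
\omega_n\Big(\rho^n - (\rho+\delta)^n + n\rho^{n-1}\delta\big(\alpha^{1-1/n} + (1-\alpha)^{1-1/n}\big)\Big).
\]
By strict concavity of $t \mapsto t^{1-1/n}$ and Step 1, $\alpha^{1-1/n} + (1-\alpha)^{1-1/n} \geq 1 + C$ for a fixed $C = C(n, C_1, C_2) > 0$. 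Meanwhile $\rho^n - (\rho+\delta)^n = -n\rho^{n-1}\delta + O(\delta^2\rho^{n-2})$, so the whole expression is $\omega_n\big(C\, n\rho^{n-1}\delta + O(\delta^2 \rho^{n-2})\big)$. Choosing $\tilde c$ small enough that the error term is dominated by half the main term, we conclude $|T_{u,\delta}\cap 2B| \gtrsim \rho^{n-1}\delta$, which is (\ref{rho}).

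\textbf{Step 3: Relate back to $\varphi_\lambda$.} Finally, recall from the setup that $T_{u,\delta/2} \subseteq T_{\varphi,\delta}\times\RR$ and the nodal set of $u$ is the product $N_\varphi \times \RR$, so a lower bound for $|T_{u,\delta}\cap 2B|$ over a wavelength-radius ball in $\bar M$ translates (after accounting for the extra $\RR$-factor, which contributes a bounded length since we work in the strip $S = M\times[0,R_0]$) into the desired local lower bound for $\varphi_\lambda$'s tube; this is the statement (\ref{ineq:cont-grow}) for $u$. The main obstacle, or at least the only genuinely substantive input, is Step 1 --- the asymmetry estimate --- which is not elementary and is imported from Mangoubi's work (and used in \cite{JM}); once it is in hand, Steps 2 and 3 are a short Brunn--Minkowski computation plus bookkeeping. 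A secondary technical point to be careful about is that ``almost Euclidean at wavelength scale'' must be quantified so that Brunn--Minkowski, the volume comparisons, and the passage between $u$ and $\varphi_\lambda$ all only cost multiplicative constants depending on $(M,g)$, not on $\lambda$.
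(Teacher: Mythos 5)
Your proposal is correct and follows essentially the same route as the paper: Mangoubi's asymmetry estimate (\ref{man}) under the bounded-doubling hypothesis to get $|B^{\pm}|\sim|B|$, the inclusion $B^+_\delta\cap B^-_\delta\subseteq T_{u,\delta}$ with inclusion--exclusion inside $B(p,\rho+\delta)$, and Brunn--Minkowski plus the strict concavity of $t\mapsto t^{1-1/n}$ to beat the $O(\delta^2\rho^{n-2})$ error for $\tilde c$ small. Your quantitative remarks (why $\alpha^{1-1/n}+(1-\alpha)^{1-1/n}\geq 1+C$, and the almost-Euclidean metric comparison) only make explicit what the paper leaves implicit.
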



{\bf Case II: Large doubling exponent:}

	Now, let us consider a ball $B(p, \rho)$ with radius $\rho$ comparable to the wavelength, and let $B^{'} = B(p, \frac{\rho}{2})$. Let us assume that initially we take $ \rho $ such that $ \frac{\delta}{\rho} \leq \tilde{c} $.
	
	Suppose $\frac{\sup_{B^{'}}|u|}{\sup_{\frac{1}{2}B^{'}}|u|}$ is large. By \label{pp7}(\ref{13}) and (\ref{14}), the frequency function $ N(p, \frac{\rho}{2}) $ is also large. Recall also the almost monotonicity of the frequency function $N(x, r)$, given by (\ref{ineq:almost-mon}), which will be implicit in our calculations below. 
	
	
	We will make use of the following fact:
	
	\begin{theorem} \label{thm:Number-of-Zeros}
		Consider a harmonic function $u$ on $B(p, 2\rho)$. If $N(p, \rho)$ is sufficiently large, then there is a number $N$ with 
    \begin{equation}\label{eq:Frequency-Comparison}
		N(p, \rho)/10 < N < 2N(p, \frac{3}{2}\rho).
	\end{equation} such that the following holds:
   Suppose that $ \epsilon \in (0, 1) $ is fixed. Then there exists a constant $ C = C(\epsilon) > 0 $ and at least $ [ N^\epsilon]^{n-1} 2^{C \log N / \log \log N} $ disjoint balls $ B(x_i, \frac{\rho}{N^\epsilon  \log^6 N}) \subset B(p, 2\rho) $ such that $ u(x_i) = 0 $. Here $ [.] $ denotes the integer part of a given number.
	\end{theorem}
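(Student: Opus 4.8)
The plan is to obtain Theorem~\ref{thm:Number-of-Zeros} as a packaging of Logunov's combinatorial machinery from \cite{L1}: concretely, of the hyperplane lemma together with a modification of Proposition~6.4 of \cite{L1}. First I would set up a Euclidean normalization. Since $B(p,2\rho)$ has at most wavelength radius, on it the metric of $\bar M$ is uniformly comparable to the flat one; after a bi-Lipschitz change to normal coordinates and the rescaling $x\mapsto(x-p)/\rho$ it suffices to argue for a solution $u$ of $L_1u=0$ (with $L_1$ as in (\ref{ellip}), coefficients controlled) on $B_2\subset\RR^n$ with $N(0,1)$ large; frequencies and doubling exponents are comparable under such transformations up to fixed multiplicative constants. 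I would then take $N$ to be the doubling exponent (equivalently, up to constants, $\tilde N(0,1)$, cf.\ (\ref{13})). The comparability (\ref{13})--(\ref{14}) and the almost-monotonicity (\ref{ineq:almost-mon}) of the frequency then place this $N$ in the required window $N(p,\rho)/10<N<2N(p,\tfrac32\rho)$, the slack constants $10$ and $2$ absorbing the comparison constants, so the ``there exists $N$'' clause costs nothing.

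The first substantive step produces the factor $[N^\epsilon]^{n-1}$. Fix the integer $A:=\lceil N^\epsilon\rceil$ and partition a cube $Q\subset B(0,\tfrac32)$ of unit side into $A^n$ congruent subcubes. Logunov's hyperplane lemma (\cite{L1}), applied because $N(u,Q)=N$ is large, yields a coordinate slab --- a stack of $A^{n-1}$ of these subcubes --- within which a definite proportion, hence $\gtrsim A^{n-1}=[N^\epsilon]^{n-1}$ of the subcubes, still carry doubling exponent $\gtrsim N^{1-\epsilon}$, in particular large. The point where the choice $A\sim N^\epsilon$ (rather than a fixed constant, as in the original normalization of Proposition~6.4) enters is precisely here: subdividing at scale $N^\epsilon$ costs only a factor $N^\epsilon$ in the doubling exponent, leaving $N^{1-\epsilon}\gg1$ available for the next step.

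The second substantive step produces the factor $2^{C\log N/\log\log N}$. On each of these $\gtrsim[N^\epsilon]^{n-1}$ subcubes $q$ --- each of side $\sim\rho/N^\epsilon$ and doubling exponent $\gtrsim N^{1-\epsilon}$ --- I would invoke Proposition~6.4 of \cite{L1}: a cube of large doubling exponent contains at least $2^{C\log N/\log\log N}$ pairwise disjoint balls, each centered at a zero of $u$ and of radius smaller than the side of $q$ by a factor $\lesssim\log^6 N$ (the logarithmic loss being the cumulative price of the $\sim\log N/\log\log N$ iterations in Logunov's argument that keep the frequency from collapsing). Since the subcubes in the slab are themselves pairwise disjoint, collecting all these balls over the slab gives at least $[N^\epsilon]^{n-1}\,2^{C\log N/\log\log N}$ pairwise disjoint balls of radius $\sim\rho/(N^\epsilon\log^6 N)$ contained in $B(p,2\rho)$, each centered at a zero of $u$. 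Undoing the rescaling and the bi-Lipschitz change (which distort radii by bounded factors, absorbed into the constant $C(\epsilon)$) finishes the proof.

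\textbf{Main obstacle.} The Euclidean reduction and the verification of the frequency window are routine. The real difficulty --- and the reason for the acknowledgement to Logunov concerning ``the modification of Proposition~6.4'' --- is making the two gains \emph{multiply}: one must check that subdividing at the non-constant scale $N^\epsilon$ still leaves enough doubling exponent ($N^{1-\epsilon}\gg1$) to run Logunov's full $\log N/\log\log N$-step iteration on each subcube, and that the compounded radius loss is genuinely only $\lesssim N^\epsilon\log^6 N$ --- not $N^\epsilon$ times a large power of $\log N$, and certainly not $N^{\epsilon+c}$. Interlacing the hyperplane lemma with the Proposition~6.4 iteration while keeping these logarithmic factors (and the almost-monotonicity invoked at each rescaling) under control is where all the care is required; the remainder is Logunov's machinery applied essentially verbatim.
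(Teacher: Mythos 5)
Your proposal takes a genuinely different route from the paper's, and that route has a gap at its first substantive step. You propose to find, via the hyperplane lemma, a slab of $\gtrsim [N^\epsilon]^{n-1}$ subcubes \emph{each retaining doubling exponent} $\gtrsim N^{1-\epsilon}$, and then to apply Proposition~6.4 of \cite{L1} to each of them. But the hyperplane lemma does not produce many subcubes of large doubling index --- it (together with the simplex lemma) runs in exactly the opposite direction: it is the engine behind Theorem~5.3 of \cite{L1} (reproduced here as Theorem~\ref{thm:5.2}), which says that subcubes of large doubling index are \emph{few} (at most $CB^{n-1-c_2}$ out of $B^{n}$). No result in \cite{L1} supplies a slab's worth of subcubes on which the doubling exponent stays $\gtrsim N^{1-\epsilon}$; indeed the doubling index can collapse at small scales at essentially every interior point even when $N(Q)$ is large, so this step cannot be taken for granted, and without it the two factors never get multiplied.

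The paper's proof assigns the two factors the other way around. The factor $[N^\epsilon]^{n-1}$ counts parallel ``tunnels'': thin boxes of cross-section $\sim d(x,\tilde x)/([N^\epsilon][\log N]^4)$ joining a point $\tilde x$ on an inner sphere, where $|u|\leq K2^{-cN/\log^2 N}$, to the maximum point $x$ on $\partial B(p,s)$, with $s$ chosen in a spherical layer where the frequency is pinned between $N$ and $2eN$. Theorem~\ref{thm:5.2} is then used to show that at least half of these tunnels are \emph{good}, meaning every cube in them has \emph{small} doubling index $\leq N2^{-c\log N/\log\log N}$. The factor $2^{C\log N/\log\log N}$ comes from a pigeonhole argument along each good tunnel: the total growth from the first cube to the last is $2^{cN/\log^2 N}$, cubes on which $u$ is sign-definite contribute only $O(1)$ by Harnack, and cubes with a sign change contribute at most $N2^{-c\log N/\log\log N}$, so at least $2^{c\log N/\log\log N}$ cubes per good tunnel must contain zeros. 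Thus the zeros are located in cubes of \emph{controlled} doubling index, not large one, and the ``modification of Proposition~6.4'' alluded to in the acknowledgements is the refinement of the tunnel cross-section to scale $N^{-\epsilon}$ (giving $[N^\epsilon]^{n-1}$ tunnels instead of boundedly many), not an iteration of that proposition over many large-doubling subcubes. Your Euclidean normalization and the verification of the window $N(p,\rho)/10<N<2N(p,\tfrac32\rho)$ are fine, but the core counting mechanism needs to be replaced by the tunnel construction.
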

	
    Theorem \ref{thm:Number-of-Zeros} is mentioned as a remark at the end of Section $6$ of \cite{L1} - for completeness and convenience, we 
    give full details of the proof of Theorem \ref{thm:Number-of-Zeros} in this paper, but we relegate them to the Appendix below.
	
	We will now use Theorem \ref{thm:Number-of-Zeros} in an iteration procedure. The first step of the iteration proceeds as follows.
	
	Let us denote by $ \zeta_1 $ the radius of the small balls prescribed by Theorem \ref{thm:Number-of-Zeros}, i.e.
	\begin{equation}
		\zeta_1 := \frac{\rho}{N^\epsilon \log^6 N}.
	\end{equation}
	Further, let $\Cal{B}_1$ denote the collection of these small balls inside $B(p, 2\rho)$. Let $F_1 := \inf_{B \in \Cal{B}_1} \frac{|T_{u, \delta} \cap B|}{\zeta_1^{n-1 }\delta}$ and let us assume that it is attained on the ball $B^1 \in \Cal{B}_1$.

	We then have that
	\begin{align*}
		|T_{u, \delta} \cap B(p, 2\rho)| & \geq \sum_{B_i \in \mathcal{B}_1} |T_{u, \delta} \cap B_i| \geq [N^\epsilon]^{n - 1}2^{C\log N /\log \log N} F_1\zeta_1^{n - 1}\delta \geq \\
		& \geq [N^\epsilon]^{n - 1}2^{C\text{log }N /\log \log N}\frac{\rho^{n - 1} \delta}{(2 N^{\epsilon}\text{log}^6N)^{n - 1}}  F_1,
	\end{align*}
which implies that
\begin{equation}
	\frac{|T_{u, \delta} \cap B(p, 2\rho)|}{\rho^{n - 1}\delta} \geq 2^{C\log N/\log \log N}F_1 \geq F_1,
\end{equation}
by reducing the constant $C$, if necessary, and assuming that $ N $ is large enough. Recalling that by assumption $ F_1 = \frac{|T_{u, \delta} \cap B^1|}{\zeta_1^{n-1}\delta} $, we obtain

\begin{equation}
	\frac{|T_{u, \delta} \cap B(p, 2\rho)|}{\rho^{n - 1}\delta} \geq 2^{C\log N/\log \log N} \frac{|T_{u, \delta} \cap B^1|}{\zeta_1^{n-1}\delta}.
\end{equation}

This concludes the first step of the iteration.

Now, the second step of the iteration process proceeds as follows. We inspect three sub-cases.

\begin{itemize}
	\item First, suppose that $ \delta $ and $ \zeta_1 $ are comparable in the sense that
	\begin{equation}
		\frac{8\delta}{\zeta_1} > \tilde{c},
	\end{equation}
	where $ \tilde{c} $ is the constant from Lemma \ref{lem:Step-I}. As there is a ball of radius $ \delta $ centered at $ x_1 $ (the center of $ B^1 $) that is contained in the tubular neighbourhood, we obtain
	\begin{equation}
		\frac{|T_{u, \delta} \cap B^1|}{\zeta_1^{n-1}\delta} \geq \frac{ C (\tilde{c} \zeta_1)^n }{\zeta_1^{n-1}\delta} \geq C \tilde{c}^n \frac{ \zeta_1}{\delta}.
	\end{equation}
	Furthermore, initially we assumed that $ \frac{\delta}{\rho} \leq \tilde{c} $, hence
	\begin{equation}
		\frac{|T_{u, \delta} \cap B^1|}{\zeta_1^{n-1}\delta} \geq C_1 \tilde{c}^{n-1} \frac{\zeta_1}{\rho} =  C_1 \tilde{c}^{n-1} \frac{1}{N^\epsilon \log^6 N} \geq C_2 \frac{1}{N^{\epsilon_1}},
	\end{equation}
	where $ \epsilon_1 > 0 $ is slightly larger than $ \epsilon $. In combination with the frequency bound of Lemma \ref{lem:Frequency-Bound} and the fact that $ N $ is comparable to the frequency by (\ref{eq:Frequency-Comparison}) we get
	\begin{equation}
		\frac{|T_{u, \delta} \cap B(p, 2\rho)|}{\rho^{n - 1}\delta} \geq  \frac{|T_{u, \delta} \cap B^1|}{\zeta_1^{n-1}\delta} \geq \frac{C_3}{\lambda^{\epsilon_1 /2}}.
	\end{equation}
	
	The iteration process finishes.
	
	\item Now suppose that the tubular radius is quite smaller in comparison to the radius of the ball, i.e.
	\begin{equation}
		\frac{8\delta}{\zeta_1} \leq \tilde{c}.
	\end{equation}
	
	Suppose further that the doubling exponent of $u$ in $ \frac{1}{8} B^1 $ is small. We can revert back to Case I and Lemma \ref{lem:Step-I} by which we deduce that
	\begin{equation}
		\frac{|T_{u, \delta} \cap B(p, 2\rho)|}{\rho^{n - 1}\delta} \geq \frac{|T_{u, \delta} \cap B^1|}{\zeta_1^{n-1}\delta} \geq \frac{|T_{u, \delta} \cap \frac{1}{8} B^1|}{\zeta_1^{n-1}\delta} \geq C,
	\end{equation}
	whence the iteration process stops.
	
	\item Finally, let us suppose that
	\begin{equation}
		\frac{8\delta}{\zeta_1} \leq \tilde{c},
	\end{equation}
	and further that the doubling exponent of $u$ in $ B^1 $ is sufficiently large. We can now replace the initial starting ball $ B(p, 2\rho) $ by $ B^1 $ and then repeat the first step of the iteration process for $ \frac{1}{8} B^1 $. As above, we see that there has to be a ball $ \tilde{B}^1 $ of radius $ \tilde{\zeta}_1 \in (\frac{1}{4}\zeta_1, \frac{1}{2} \zeta_1) $ upon which the frequency is comparable to a sufficiently large number $ N_1 $. 
	
	Now, we apply Theorem \ref{thm:Number-of-Zeros} and within $ B^1 $ discover at least $ [N_1^\epsilon]^{n-1} 2^{C \log N_1 / \log \log N_1} $ balls of radius
	\begin{equation}
		\zeta_2 := \frac{\zeta_1}{N_1^\epsilon \log^6 N_1},
	\end{equation}
    such that $\varphi_\lambda$ vanishes at the center of these balls.
	
	As before, we denote the collection of these balls by $ \mathcal{B}_2 $ and put $ F_2 := \inf_{B \in \Cal{B}_2} \frac{|T_{u, \delta} \cap B|}{\zeta_1^{n-1 }\delta}$. Analogously we also obtain
	\begin{equation}
		F_1 = \frac{|T_{u, \delta} \cap B^1|}{\zeta_1^{n - 1}\delta} \geq 2^{C\log N_1/\log \log N_1}F_2 \geq F_2.
	\end{equation}
	
	Again, we reach the three sub-cases. If either of the two first sub-cases holds, then we bound $ F_2 $ in the same way as $ F_1 $ - this yields a bound on $ \frac{|T_{u, \delta} \cap B_\rho|}{\rho^{n - 1}\delta} $. If the third sub-case holds, then we repeat the construction and eventually get $ F_3, F_4, \dots $.
	
\end{itemize}

Notice that the iteration procedure eventually stops. Indeed, it can only proceed if the third sub-case is constantly iterated. However, at each iteration the radius of the considered balls drops sufficiently fast and this ensures that either of the first two sub-cases is eventually reached.

This finally gives us
\begin{equation}\label{ineq:final_iteration}
	\frac{|T_{u, \delta} \cap B(p, 2\rho)|}{\rho^{n - 1}\delta} \geq F_1 \geq F_2 \geq \dots \geq \frac{C_3}{\lambda^{\epsilon_1 /2}}.
\end{equation}

At last, we are done with the iteration, and this also brings us to the end of the discussion about Case I and Case II. To summarize what we have established, the most ``unfavourable'' situation is that scenario in Case II, where we at every level of the iteration we encounter balls of high doubling exponent, and we have to carry out the iteration all the way till the radius of the smaller balls (whose existence at every stage is guaranteed by Theorem \ref{thm:Number-of-Zeros}) drops below $\delta$. The lower bound for $\frac{|T_{u, \delta} \cap B(p, 2\rho)|}{\rho^{n - 1}\delta}$ in such a ``worst'' scenario is given by (\ref{ineq:final_iteration}).

We are now ready to finish the proof. Letting $\rho = \frac{r}{2\sqrt{\lambda}}$ and by summing (\ref{ineq:final_iteration}) over the $\sim \lambda^{n/2}$ many wavelength balls $B^i_{\frac{r}{\sqrt{\lambda}}}$ (as mentioned at the beginning of this Section), we have that
\begin{equation}
	|T_{u, \delta}| \geq \frac{C_3}{\lambda^{\epsilon_1 /2}}\rho^{n - 1}\delta\lambda^{n/2} \gtrsim \lambda^{1/2 - \epsilon_1/2}\delta.
\end{equation}

Using the relationship between the nodal sets of $\varphi_\lambda$ and $u$, this yields (\ref{low}).

\end{proof}

Now we turn to the proof of the upper bound.

\begin{proof}[Proof of (\ref{up})]

We start by giving a formal statement of the main result of ~\cite{L2}:
\begin{theorem}\label{th: log-upp-bd}
	Let $(M, g)$ be a compact smooth Riemannian manifold without boundary. Then there exists a number $\kappa$, depending only on $n = \text{dim } M$ and $C = C(M, g)$ such that
	\begin{equation}
		\Cal{H}^{n - 1}(N_\varphi) \leq C\lambda^\kappa.
	\end{equation}
\end{theorem}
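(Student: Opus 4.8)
The plan is to follow the strategy of \cite{L2}, which has two parts: a reduction — via the harmonic lift of Section~\ref{subsec:Prelim} and a covering argument — to a purely local estimate for the nodal volume of an elliptic solution in a Euclidean cube in terms of its doubling index; and then a proof of that local estimate by an iteration over cube subdivisions whose engine is Logunov's ``simplex lemma''.

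\emph{Step 1 (reduction to a local statement).} Lift $\varphi_\lambda$ to the harmonic function $u(x,t)=e^{\sqrt{\lambda}t}\varphi_\lambda(x)$ on $\bar M=M\times\RR$ with the product metric $\bar g = g + dt^2$. Since $\{u=0\}=N_\varphi\times\RR$, one has $\Cal{H}^{n-1}(N_\varphi)=\Cal{H}^{n}\big(\{u=0\}\cap(M\times[0,1])\big)$. Cover $M\times[0,1]$ by a number of coordinate charts fixed independently of $\lambda$; in each, after rescaling to unit size, $u$ solves a uniformly elliptic divergence-form equation $Lu=0$ with smooth coefficients as in (\ref{ellip}), and by Theorem~\ref{thm:DF-bounds} and Lemma~\ref{lem:Frequency-Bound} its doubling index there is $\lesssim\sqrt{\lambda}$. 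Hence it suffices to find an exponent $\kappa_0=\kappa_0(d)$, $d:=n+1$, such that for every solution $u$ of such an equation in $2Q$, with $Q\subset\RR^{d}$ a unit cube and $N:=\log\frac{\sup_{2Q}|u|}{\sup_{Q}|u|}$ the doubling index, one has $\Cal{H}^{d-1}(\{u=0\}\cap Q)\le C\,N^{\kappa_0}$.

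\emph{Step 2 (iteration).} Let $F(N)$ be the supremum of $\Cal{H}^{d-1}(\{u=0\}\cap Q)$ over all admissible $u$ (and all admissible operators $L$) with doubling index at most $N$. Subdivide $Q$ into $A^{d}$ congruent subcubes of side $1/A$, where $A=A(d)\in\NN$ is the constant from Logunov's cube-counting lemma; that lemma gives $c=c(d)>0$ and $N_0=N_0(d)$ such that, once $N\ge N_0$, all but at most $A^{d-1-c}$ of the subcubes $q$ have doubling index of $u$ on $q$ at most $N/2$, while every subcube has doubling index $\lesssim N$ by the almost-monotonicity of the frequency, (\ref{ineq:almost-mon}). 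Rescaling a subcube to unit size multiplies $\Cal{H}^{d-1}$ by $(1/A)^{d-1}$, so
\[
F(N)\ \le\ A^{d}\cdot A^{-(d-1)}\,F(N/2)\ +\ A^{d-1-c}\cdot A^{-(d-1)}\,F(N)\ =\ A\,F(N/2)+A^{-c}F(N),
\]
hence $F(N)\le 2A\,F(N/2)$ for $N\ge N_0$. Iterating $O(\log N)$ times down to $N\sim N_0$, with base case the classical fact that an elliptic solution with smooth coefficients and bounded doubling index has bounded nodal Hausdorff measure (so $F(N_0)<\infty$; see \cite{HaSi, GL1}), yields $F(N)\le C\,N^{\kappa_0}$ with $\kappa_0=1+\log_2 A$. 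Plugging in $N\lesssim\sqrt{\lambda}$ and summing over the finitely many charts gives $\Cal{H}^{n-1}(N_\varphi)\lesssim\lambda^{\kappa_0/2}$, which is the theorem with $\kappa=\kappa_0(n+1)/2$ (depending only on $n$) and $C=C(M,g)$ absorbing the chart data and $F(N_0)$.

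\emph{Main obstacle.} The real content is the cube-counting lemma of Step~2 — that among $A^{d}$ subcubes only $\lesssim A^{d-1-c}$ can keep a doubling index comparable to that of the parent cube. It is reduced to the \emph{simplex lemma}: a solution of $Lu=0$ cannot have large doubling index simultaneously on each of the balls $B(x_0,r),\dots,B(x_{d},r)$ when $x_0,\dots,x_{d}$ span a simplex non-degenerate on a definite scale and $r$ is small relative to that scale, because iterated three-ball (propagation-of-smallness) inequalities along the edges of such a simplex would force the doubling index on a much larger ball to exceed the a priori bound given by almost-monotonicity. The delicate point is the packing/pigeonhole argument that extracts such a non-degenerate simplex from any over-large collection of ``bad'' subcubes; this is what fixes the admissible value of $A(d)$ and is where essentially all the difficulty of \cite{L2} lies. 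Granting the cube-counting lemma, Steps~1 and~2 are routine, and the $C^1$-coefficient case needed for the charts is handled by the frequency-function machinery already recalled in Section~\ref{subsec:Prelim}.
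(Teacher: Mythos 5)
The paper does not prove this statement: Theorem \ref{th: log-upp-bd} is quoted verbatim as the main result of Logunov's paper \cite{L2} and is used as a black box in the proof of the upper bound (\ref{up}), so there is no internal argument to compare yours against; the relevant comparison is with \cite{L2} itself. Measured against that, your outline is architecturally faithful: the harmonic lift, the reduction to a local bound for the nodal volume of a divergence-form elliptic solution in a cube in terms of its doubling index, and the recursion $F(N)\le A\,F(N/2)+A^{-c}F(N)$ driven by a cube-counting lemma are exactly the skeleton of Logunov's argument, and your bookkeeping (the exponent $\kappa_0=1+\log_2 A$, stopping the iteration at $N\sim N_0$, the base case) is sound, modulo two standard caveats: the recursion step needs the a priori finiteness of $F(N)$ for \emph{all} $N$, not just $N_0$ (supplied by the exponential-in-$N$ bounds of \cite{HaSi}), and the ``doubling index of $u$ on a subcube'' must be taken in the maximal sense $N(Q)=\sup_{x\in Q,\,r\le\diam Q}\log\bigl(\sup_{B(x,10nr)}|u|/\sup_{B(x,r)}|u|\bigr)$, as in the Appendix of this paper, for the monotonicity and covering statements you invoke to be literally true of subcubes.

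The genuine gap is the one you name yourself: the cube-counting lemma --- that among $A^{d}$ subcubes only $O(A^{d-1-c})$ can retain a doubling index comparable to that of the parent --- is not an input one may ``grant''; it essentially \emph{is} the theorem. Once it is available, Steps 1 and 2 are elementary, and the entire content of \cite{L2} (the simplex lemma, the hyperplane lemma, and the packing argument that extracts a non-degenerate simplex or a dense hyperplane configuration from an over-large family of bad subcubes) lives inside it. A version of this lemma is recorded in the present paper as Theorem \ref{thm:5.2}, but there too it is quoted from \cite{L1} without proof. So your proposal is a correct reduction of the theorem to the combinatorial core of \cite{L1}--\cite{L2}, not a self-contained proof; it amounts to an expanded citation, which is in effect what the paper itself provides.
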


As remarked before, we assume that $M$ has sufficiently large injectivity radius. Consider a finite covering $Q_k$ of $M$ by cubes of side length $1$, say. Consider a subdivision of each cube $Q_k$ into subcubes $Q_{k, \nu}$ of side length $\delta,$ where $\delta \leq \frac{1}{3}$. Call a small subcube $Q_{k, \nu}$ a nodal cube if $N_\varphi \cap Q_{k, \nu} \neq \emptyset$. Also, denote by $Q_{k, \nu}^*$ the union of $Q_{k, \nu}$ with its $3^n - 1$ neighbouring subcubes. Then, it is clear that
\beq\label{Nod}
T_{\varphi, \delta} \subset \bigcup_{\text{Nod}} Q_{k, \nu}^*,
\eeq
where Nod denotes the set of all nodal subcubes $Q_{k,
	\nu}$.


By Theorem 1.2 of \cite{L1}, we have that
\beq\label{hell}
\Cal{H}^{n - 1}(N_\varphi \cap Q^*_{k,\nu}) \gtrsim \delta^{n - 1}.
\eeq

Summing up (\ref{hell}), 
we get that
\begin{align*}
3^n\Cal{H}^{n - 1}(N_\varphi) 
& \geq \sum_{\text{Nod} } \Cal{H}^{n - 1}(N_\varphi \cap Q^*_{k,\nu}) \\& \gtrsim \#(\text{nodal } Q_{k,\nu})\delta^{n - 1},
\end{align*}
which means that the number of nodal subcubes is $\lesssim \Cal{H}^{n - 1}(N_\varphi)/\delta^{n - 1}$. 
Using (\ref{Nod}), this means that
\[
|T_{\varphi, \delta}| \lesssim \Cal{H}^{n - 1}(N_\varphi)\delta.
\]

Finally, we invoke Theorem \ref{th: log-upp-bd} to finish our proof.
S

\end{proof}

\section{Some remarks on the asymptotic geometry of nodal domains: Theorems \ref{ICC} and \ref{newres}}\label{ICCsec}
\subsection{Internal cone condition}
\subsubsection{Preliminaries}\label{prel1}
We will use Bers scaling of eigenfunctions near zeros (see ~\cite{Be}). We quote the version as appears in ~\cite{Z}, Section 3.11.
\begin{theorem}[Bers]\label{Zildo}
Assume that $\varphi_\lambda$ vanishes to order $k$ at $x_0$. Let $\varphi_\lambda(x) = \varphi_k(x) + \varphi_{k + 1}(x) + ..... $ denote the Taylor expansion of $\varphi_\lambda$ into homogeneous terms in normal coordinates $x$ centered at $x_0$. Then $\varphi_\kappa(x)$ is a Euclidean harmonic homogeneous polynomial of degree $k$.
\end{theorem}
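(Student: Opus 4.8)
The plan is to reduce the statement to the elementary fact that a function annihilated by the Euclidean Laplacian whose lowest-order Taylor term is homogeneous of degree $k$ must have that term harmonic, and then to control the error coming from the fact that $\varphi_\lambda$ satisfies $\Delta_g \varphi_\lambda + \lambda \varphi_\lambda = 0$ rather than $\Delta_{\mathrm{eucl}} \varphi_\lambda = 0$. First I would pass to normal coordinates $x = (x^1,\dots,x^n)$ centered at $x_0$, so that $g_{ij}(x) = \delta_{ij} + O(|x|^2)$ and the Laplace--Beltrami operator takes the form $\Delta_g = \Delta_{\mathrm{eucl}} + b^{ij}(x)\partial_i\partial_j + c^i(x)\partial_i$ with $b^{ij}(x) = O(|x|^2)$ and $c^i(x) = O(|x|)$. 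In these coordinates $\varphi_\lambda$ is a smooth solution of a second-order elliptic equation with smooth coefficients, so its Taylor expansion $\varphi_\lambda(x) = \varphi_k(x) + \varphi_{k+1}(x) + \cdots$ into homogeneous polynomials is meaningful; that $\varphi_\lambda$ actually vanishes to finite order $k$ (and not to infinite order) is the standard strong unique continuation property for such operators, which I would quote.

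The main step is the following order-of-vanishing bookkeeping. Apply $\Delta_g$ to the Taylor series and group terms by homogeneity. The Euclidean Laplacian $\Delta_{\mathrm{eucl}}$ lowers homogeneous degree by exactly $2$, the zeroth-order term $\lambda \varphi_\lambda$ preserves degree, and the perturbation terms $b^{ij}\partial_i\partial_j$ and $c^i\partial_i$ raise the degree of a homogeneous polynomial of degree $m$ to something of degree $\geq m$ (since $b^{ij} = O(|x|^2)$ kills two derivatives but multiplies by a degree-$\geq 2$ factor, and $c^i = O(|x|)$ kills one derivative but multiplies by a degree-$\geq 1$ factor). Hence the \emph{lowest-degree} homogeneous component appearing in $0 = \Delta_g\varphi_\lambda + \lambda\varphi_\lambda$ is the degree-$(k-2)$ piece, and the only contribution to it is $\Delta_{\mathrm{eucl}}\varphi_k$ — the terms $\lambda\varphi_k$, $b^{ij}\partial_i\partial_j\varphi_k$, $c^i\partial_i\varphi_k$, and all contributions from $\varphi_{k+1},\varphi_{k+2},\dots$ are of degree $\geq k-1$. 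Therefore $\Delta_{\mathrm{eucl}}\varphi_k = 0$, i.e.\ $\varphi_k$ is a Euclidean harmonic homogeneous polynomial of degree $k$. This is exactly the claim.

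I expect the only genuinely nontrivial input to be the justification that $\varphi_\lambda$ vanishes to some finite order $k$ with $\varphi_k \not\equiv 0$, which is strong unique continuation for $\Delta_g + \lambda$ (due to Aronszajn, or available through the frequency-function monotonicity machinery already recalled in Section~\ref{subsec:Prelim}); everything else is the degree-counting argument above, which is routine once the normal-coordinate expansion of $\Delta_g$ is written down. One should also note that the statement as used later does not require any quantitative control — only the qualitative identification of the leading term — so no estimates on the remainder $\varphi_{k+1} + \varphi_{k+2} + \cdots$ are needed beyond the fact that it is $o(|x|^k)$, which is automatic from smoothness. If a self-contained treatment were desired, one could alternatively invoke Bers' original argument or the exposition in \cite{Z}, Section 3.11, verbatim.
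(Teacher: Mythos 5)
Your argument is correct and is essentially the standard one: the paper itself does not prove this theorem but quotes it from Bers \cite{Be} via \cite{Z}, Section 3.11, and your degree-counting in normal coordinates (where $\Delta_g = \Delta_{\mathrm{eucl}} + b^{ij}\partial_i\partial_j + c^i\partial_i$ with $b^{ij}=O(|x|^2)$, $c^i=O(|x|)$, so that only $\Delta_{\mathrm{eucl}}\varphi_k$ contributes to the degree-$(k-2)$ part of the Taylor series of $\Delta_g\varphi_\lambda+\lambda\varphi_\lambda=0$) is precisely the argument in those references. Your remark that strong unique continuation is what guarantees a finite vanishing order $k$ is also apt, though the statement as phrased already assumes it.
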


We also use the following inradius estimate for real analytic metrics (see ~\cite{G}).
\begin{theorem}\label{Zeldiev}
	Let $(M, g)$ be a real-analytic closed manifold of dimension at
	least $3$. If $\Omega_\lambda$ is a nodal domain corresponding to the eigenfunction $\varphi_\lambda$, 
	then there exist constants $\lambda_0, c_1$ and $c_2$ which depend only
	on $(M, g)$, such that
	\beq\label{ra_inrad}
	\frac{c_1}{\lambda} \leq \text{inrad }(\Omega_\lambda) \leq  \frac{c_2}{\sqrt{\lambda}}, \lambda \geq \lambda_0.\eeq
\end{theorem}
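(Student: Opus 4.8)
\textbf{Proof proposal for Theorem \ref{Zeldiev} (real-analytic inradius estimate).}

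The plan is to reduce both inequalities to local models near a point realizing the inradius. The upper bound $\mathrm{inrad}(\Omega_\lambda) \le c_2/\sqrt{\lambda}$ is the soft direction: it follows from domain monotonicity of the first Dirichlet eigenvalue, since an inscribed ball $B(x,\mathrm{inrad}(\Omega_\lambda)) \subseteq \Omega_\lambda$ gives $\lambda_1(B(x,\mathrm{inrad}(\Omega_\lambda))) \le \lambda_1(\Omega_\lambda) = \lambda$, and $\lambda_1$ of a metric ball of radius $r$ on $(M,g)$ is $\gtrsim 1/r^2$ once $r$ is below the injectivity radius (compare with a Euclidean ball of comparable radius using that the metric is uniformly equivalent to the flat one at small scales). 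This part needs no analyticity.

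For the lower bound $\mathrm{inrad}(\Omega_\lambda) \ge c_1/\lambda$, I would argue as follows. Pick a point $x_0 \in \Omega_\lambda$ where $|\varphi_\lambda|$ attains its maximum over $\overline{\Omega_\lambda}$; by Claim-type gradient estimates this maximum sits at distance $\gtrsim$ (something) from $\partial\Omega_\lambda$, but to get the sharper $1/\lambda$ one instead uses the analytic structure of the nodal set. The key point is that in the real-analytic setting, Donnelly--Fefferman-type analytic continuation gives that $\varphi_\lambda$ behaves near any nodal point like a polynomial of degree $\lesssim \sqrt{\lambda}$ (the frequency/doubling bound of Theorem \ref{thm:DF-bounds} is realized by an honest polynomial approximation), and near a zero of order $k$ the Bers expansion (Theorem \ref{Zildo}) shows $\varphi_\lambda$ is modeled by a harmonic homogeneous polynomial $\varphi_k$ of degree $k \lesssim \sqrt{\lambda}$. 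A nodal domain of such a harmonic polynomial in its homogeneity cone contains a cone of fixed opening angle, hence inside the wavelength ball $B(x_0, r_0/\sqrt{\lambda})$ the domain $\Omega_\lambda$ contains a sector which, after accounting for the degree $k \lesssim \sqrt\lambda$ of the polynomial (the opening angle of a nodal component of a degree-$k$ harmonic polynomial is $\gtrsim 1/k \gtrsim 1/\sqrt\lambda$, and its ``reach'' into the ball against the curvature of neighbouring nodal sheets costs another factor), yields an inscribed ball of radius $\gtrsim \frac{1}{\sqrt\lambda} \cdot \frac{1}{\sqrt\lambda} = \frac{1}{\lambda}$. Concretely: rescale $B(x_0, r_0/\sqrt\lambda)$ to the unit ball, where $\varphi_\lambda$ becomes (uniformly) a solution of an elliptic equation with controlled frequency $\lesssim \sqrt\lambda$; the nodal set in the unit ball is then, by the analytic bounds, contained in a $\sqrt\lambda$-to-one branched cover picture, so the complement contains a ball of radius $\gtrsim 1/\sqrt\lambda$ in the rescaled picture, i.e. $\gtrsim 1/\lambda$ after scaling back.

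The main obstacle is making the last step rigorous: controlling how the nodal set of $\varphi_\lambda$ can ``fold'' inside a wavelength ball so that the largest complementary ball is not smaller than $1/\lambda$. In the real-analytic case this is exactly where one invokes the growth-of-polynomials estimates of \cite{DF} together with the analytic continuation of $\varphi_\lambda$ to a holomorphic function on a complexified neighbourhood, which bounds the number of sheets of the nodal set crossing the ball by $\lesssim \sqrt\lambda$; each sheet can be locally straightened, and a packing argument then produces the $1/\lambda$-ball. I expect the cleanest route is to cite the relevant Donnelly--Fefferman estimate verbatim and combine it with the Bers scaling to pass to the homogeneous harmonic polynomial model, where the inradius-to-degree comparison for nodal domains of harmonic polynomials is classical; the bookkeeping of constants (that $c_1, c_2, \lambda_0$ depend only on $(M,g)$) is routine once the scaling is set up, since all estimates used are uniform at scales below the injectivity radius.
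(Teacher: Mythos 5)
The paper does not actually prove Theorem \ref{Zeldiev}: it is quoted from \cite{G}, where the lower bound $\text{inrad}(\Omega_\lambda)\gtrsim 1/\lambda$ is the main theorem. Your upper bound via domain monotonicity is correct and is the standard argument the introduction alludes to; the problem is the lower bound, where your sketch has a genuine gap rather than a complete alternative proof.

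Two concrete problems. First, you assert that a nodal component of a degree-$k$ homogeneous harmonic polynomial has opening angle $\gtrsim 1/k$, i.e.\ that nodal domains of degree-$k$ spherical harmonics on $S^{n-1}$ have inradius $\gtrsim 1/k$. For $n=3$ the relevant sphere is $S^{2}$ and this follows from Mangoubi's two-dimensional inradius bound, but for $n\ge 4$ it is not known and is essentially the statement of Theorem \ref{Zeldiev} itself applied to the real-analytic manifold $S^{n-1}$, so the argument becomes circular; the paper's own Theorem \ref{ICC} only obtains opening angle $\gtrsim \lambda^{-7/8}$ in dimension $4$ and $\gtrsim \lambda^{-1}$ in dimension $\ge 5$, precisely because the best available input there is \eqref{ra_inrad}. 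Second, the Bers expansion (Theorem \ref{Zildo}) is an infinitesimal statement: it identifies the tangent cone of $N_\varphi$ at a point, but gives no control, uniform in $\lambda$, over the scale at which the homogeneous model $\varphi_k$ actually approximates $\varphi_\lambda$ (the vanishing order $k$ can be as large as $\sim\sqrt{\lambda}$ and the higher Taylor coefficients are not bounded in terms of $(M,g)$ alone). Consequently a cone of directions of aperture $\alpha$ in $S(T_{x_0}M)$ does not convert into an inscribed ball of radius $\sim\alpha/\sqrt{\lambda}$ in $\Omega_\lambda$ without exactly the quantitative work you defer to a ``packing argument''. The proof in \cite{G} runs along different lines: in the real-analytic case every wavelength ball has doubling exponent $\lesssim\sqrt{\lambda}$ by \cite{DF}, and this is combined with Mangoubi-type asymmetry and rapid-growth-in-narrow-domains estimates (the same mechanism that drives Theorem \ref{newres} of this paper), not with a tangent-cone analysis at a singular point.
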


Since the statement of Theorem \ref{Zeldiev} is asymptotic in nature, we need to justify that if $\lambda < \lambda_0$, a nodal domain corresponding to $\lambda$ will still satisfy $\text{inrad }(\Omega_\lambda) \geq \frac{c_3}{\lambda}$ for some constant $c_3$. This follows from the inradius estimates of Mangoubi in \cite{Man2}, which hold for all frequencies. Consequently, we can assume that every nodal domain $\Omega$ on $S^n$ corresponding to the spherical harmonic $\varphi_k(x)$, as in Theorem \ref{Zildo}, has inradius $\gtrsim \frac{1}{\lambda}$.
\subsection{Proof of Theorem \ref{ICC}}
	We observe that Theorem \ref{Zeldiev} applies to spherical harmonics, and in particular the function $\text{exp}^*(\varphi_k)$, restricted to $S(T_{x_0}M)$, where $\varphi_k(x)$ is the homogeneous harmonic polynomial given by Theorem \ref{Zildo}. 
	 Also, a nodal domain for any spherical harmonic on $S^2$ (respectively, $S^3$) corresponding to eigenvalue $\lambda$ has inradius $\sim \frac{1}{\sqrt{\lambda}}$ (respectively, $\gtrsim \frac{1}{\lambda^{7/8}}$).
	
	With that in place, it suffices to prove that
	\beq\label{suff}
	\Cal{S}_{x_0}N_\varphi \subseteq \Cal{S}_{x_0}N_{\varphi_k}.
	\eeq
\begin{proof} By definition, $v \in \Cal{S}_{x_0}N_\varphi$ if there exists a sequence $x_n \in N_\varphi$ such that $x_n \to x_0$, $x_n = \text{exp} (r_nv_n)$, where $r_n$ are positive real numbers and $v_n \in S(T_{x_0}M)$, and $v_n \to v$.

 This gives us,
\begin{align*}
0 & = \varphi_\lambda(x_n)  = \varphi_\lambda(r_n\text{exp }v_n) \\
& = r_n^k\varphi_k(\text{exp }v_n) + \sum_{m > k}r_n^{m}\varphi_m(\text{exp }v_n)\\
& = \varphi_k(\text{exp }v_n) + \sum_{m > k}r_n^{m - k}\varphi_m(\text{exp }v_n)\\
& \to \varphi_k(\text{exp }v), \text{ as } n \to \infty.
\end{align*}

 Observing that $\varphi_k(x)$ is homogeneous, this proves (\ref{suff}).
\end{proof}

\subsection{Inscribed balls in a nodal domain}
\subsubsection{Preliminaries} \label{prel}

We start again by collecting some auxiliary results that we need for the proof of Theorem  \ref{newres}. These include
\begin{enumerate}
\item A maximum principle for solutions of elliptic PDE,
\item Comparison estimates on the volumes of positivity/negativity of eigenfunctions (i.e. local asymmetry of sign),
\item Growth of solutions of elliptic PDE in narrow domains,
\item Existence of almost inscribed balls.
\end{enumerate}

\subsubsection{Local elliptic maximum principle}  We quote the following local maximum principle, which appears as Theorem 9.20 in ~\cite{GT}.
\begin{theorem}
	Suppose $Lu \leq 0$ on $B_1$. Then
	\beq\label{GT}
	\sup_{B(y, r_1)} u \leq C(r_1/r_2, p)\left( \frac{1}{\Vol (B(y, r_2))}\int_{B(y, r_2)} (u^{+}(x))^p dx\right)^{1/p},
	\eeq
	for all $p > 0$, whenever $0 < r_1 < r_2$ and $B(y, r_2) \subseteq B_1$.
	
\end{theorem}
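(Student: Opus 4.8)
Since this is the classical interior $L^\infty$ bound for weak subsolutions of a divergence-form elliptic operator, the plan is to recover it by Moser iteration, handling the exponent $p = 2$ first and then deducing the general range $0 < p < \infty$ by interpolation. Throughout, the structural data entering the constants are $n$, the ellipticity constants $\kappa_1, \kappa_2$, the bound $K$, and the smallness of $\ep_0$; after rescaling the ball $B(y, r_2)$ to unit size, the ratio $r_1/r_2$ plays the role of the remaining free parameter, which is why the constant in \eqref{GT} has the stated dependence.

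First I would set $\bar u := u^+ + k$ on $B(y, r_2)$ with an auxiliary constant $k > 0$ whose only job is to absorb the lower-order term $-\ep q u$; because the equation is homogeneous, $k$ may be sent to $0$ at the end. Working in the weak formulation of $Lu \le 0$ and testing against $\psi = \eta^2 \bar u^{2\gamma - 1}$ with $\gamma \ge 1$ and a Lipschitz cutoff $\eta$, one uses the ellipticity bound (a), the boundedness of $a^{ij}$, and $|q| \le K$ with $\ep < \ep_0$ small, together with a Cauchy--Schwarz absorption of the gradient term, to reach a Caccioppoli-type estimate
\beq\label{eq:caccioppoli-sketch}
\int \eta^2 \, |\nabla(\bar u^{\gamma})|^2 \, dx \;\le\; C \gamma^2 \int \big( |\nabla \eta|^2 + \eta^2 \big)\, \bar u^{2\gamma} \, dx .
\eeq
Feeding \eqref{eq:caccioppoli-sketch} into the Sobolev inequality applied to $\eta \bar u^\gamma$ yields a reverse-H\"older inequality: for concentric balls $B_{\sigma'} \subset B_{\sigma''} \subseteq B(y, r_2)$,
\beq\label{eq:reverse-holder-sketch}
\Big( \int_{B_{\sigma'}} \bar u^{2\gamma\chi} \Big)^{1/\chi} \;\le\; \frac{C \gamma^2}{(\sigma'' - \sigma')^2} \int_{B_{\sigma''}} \bar u^{2\gamma} , \qquad \chi := \tfrac{n}{n-2}
\eeq
(and any fixed $\chi > 1$ when $n = 2$). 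The factor $\gamma^2$ is the point on which convergence of the iteration hinges.

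Next I would iterate \eqref{eq:reverse-holder-sketch} along the exponents $\gamma_j = \chi^j$ and radii $\sigma_j = r_1 + (r_2 - r_1)2^{-j}$, which gives
\beq\label{eq:moser-product}
\sup_{B(y, r_1)} \bar u \;\le\; \prod_{j \ge 0} \Big( \frac{C \chi^{2j}\, 4^{j+1}}{(r_2 - r_1)^2} \Big)^{\chi^{-j}/2} \; \Big( \int_{B(y, r_2)} \bar u^2 \Big)^{1/2} ,
\eeq
and the infinite product converges because $\sum_j \chi^{-j}$ and $\sum_j j\chi^{-j}$ are finite; its value, after normalising by $\Vol(B(y, r_2))$, is a constant of the announced type. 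Sending $k \downarrow 0$ yields \eqref{GT} for $p = 2$. For exponents $p > 2$ one restarts the same iteration from $\gamma_0 = p/2$, at the cost of an extra dependence of the constant on $p$.

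The hard part is the range $0 < p < 2$, and this is the step I expect to cause the most trouble. Here I would combine the $p = 2$ bound just obtained with the elementary interpolation $\int_{B_\sigma} \bar u^2 \le \big( \sup_{B_\sigma} \bar u \big)^{2-p} \int_{B_\sigma} \bar u^p$ and Young's inequality to obtain, for $r_1 \le \sigma' < \sigma'' \le r_2$,
\beq\label{eq:interp-sketch}
\sup_{B(y, \sigma')} \bar u \;\le\; \tfrac{1}{2} \sup_{B(y, \sigma'')} \bar u \;+\; \frac{C}{(\sigma'' - \sigma')^{n/p}} \Big( \int_{B(y, r_2)} \bar u^p \Big)^{1/p} .
\eeq
A standard iteration lemma for nonnegative, nondecreasing functions of the radius then removes the first term on the right and produces \eqref{GT} for all $p \in (0,2)$; the care needed is in tracking how the constant degenerates as $p \to 0$ and in verifying the hypotheses of that iteration lemma. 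Everything else is the routine Moser machinery, whose details may be found in \cite{GT}.
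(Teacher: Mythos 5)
The paper offers no proof of this statement at all: it is quoted verbatim as Theorem 9.20 of \cite{GT}, so there is nothing internal to compare your argument against. Your Moser-iteration outline is the standard textbook proof of the local maximum principle for divergence-form subsolutions (in \cite{GT} this is really Theorem 8.17; Theorem 9.20 as numbered there is the non-divergence-form analogue for strong $W^{2,n}$ subsolutions, proved instead via Alexandrov--Bakelman--Pucci-type arguments), and as an outline it is correct: the Caccioppoli estimate, the reverse H\"older step via Sobolev, the iteration along $\gamma_j = \chi^j$, and the interpolation-plus-absorption trick for $0 < p < 2$ are all the right ingredients, and the $p<2$ step in particular is handled correctly (the Young-inequality splitting and the standard iteration lemma for $f(\sigma') \le \theta f(\sigma'') + A(\sigma''-\sigma')^{-\alpha}$ do exactly what you claim). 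Two small technical points you should tighten if you write this out in full: the test function must vanish on $\{u \le 0\}$, so one should take $\psi = \eta^2\bigl(\bar u^{2\gamma-1} - k^{2\gamma-1}\bigr)$ (or build $\psi$ from $u^+$ directly) rather than $\eta^2 \bar u^{2\gamma-1}$, since otherwise the weak formulation of $Lu \le 0$ picks up $\nabla u$ on the set where $u<0$; and the admissibility of these test functions requires truncating $\bar u$ at height $N$ (or working with $\min(\bar u, N)$) so that $\psi \in H^1_0$ before passing to the limit. Both are routine. Given that the paper treats this as a black-box citation, your reconstruction is a legitimate and complete-in-outline substitute.
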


\subsubsection{Local asymmetry of nodal domains} \label{subsubsec:Asymmetry} Our proof also uses the concept of local asymmetry of nodal domains, which roughly means the following. Consider a manifold $M$ with smooth metric. If the nodal set of an eigenfunction $\varphi_\lambda$ enters sufficiently deeply into a geodesic ball $B$, then the volume ratio between the positivity and negativity set of $\varphi_\lambda$ in $B$ is controlled in terms of $\lambda$. More formally, we have the following result from ~\cite{Man2}:

\beq\label{man}
	\frac{|\{\varphi_\lambda > 0\} \cap B|}{|B|} \gtrsim \frac{1}{\langle \beta_{1/2}(\varphi )\rangle^{n - 1}},
\eeq
where $\langle \beta_r\rangle = \text{max}\{\beta_r, 3\}$. In particular, when combined with the growth bound of Donnelly-Fefferman, this yields that 
$$
\frac{|\{\varphi_\lambda > 0\} \cap B|}{|B|} \gtrsim \frac{1}{\lambda^{\frac{n - 1}{2}}}.
$$
This particular question about comparing the volumes of positivity and negativity seems to originate from \cite{ChMu}, \cite{DF1}, and then work of Nazarov, Polterovich and Sodin (cf. \cite{NPS}), where they also conjecture that the present bound is far from being optimal. Moreover, it is believed that the sets of positivity and negativity should have volumes which are comparable up to a factor of $1/\lambda^\epsilon$ for small $\epsilon > 0$.

\subsubsection{Rapid growth in narrow domains} Heuristically, this means that if $\varphi$ solves (\ref{ellip}), and has a deep and narrow positivity component, then $\varphi$ grows rapidly in the said component. In our paper, we use an iterated version of this principle, which appears as Theorem 3.2 in ~\cite{Man3}. Let $\varphi_\lambda$ satisfy (\ref{ellip}) on the rescaled ball $B_1$, as at the beginning of Section \ref{subsec:Prelim}.
\begin{theorem}\label{rapgrow}
	Let $0 < r'< 1/2$. Let $\Omega$ be a connected component of $\{ \varphi > 0\}$ which intersects $B_{r'}$. Let $\eta > 0$ be small. If $|\Omega \cap B_r|/|B_r| \leq \eta^{n - 1}$ for all $r' < r < 1$, then
	\[
	\frac{\sup_\Omega \varphi}{\sup_{\Omega \cap B_{r'}}\varphi} \geq \left(\frac{1}{r'}\right)^{C^{'}/\eta},
	\]
	where $C^{'}$ is a constant depending only on the metric $(M, g)$.
\end{theorem}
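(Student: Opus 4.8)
The plan is to show that the narrowness hypothesis forces $s\mapsto\log\big(\sup_{\Omega\cap B_s}\varphi\big)$ to increase in $\log s$ at rate $\gtrsim 1/\eta$ across the range $s\in(r',1/2)$; integrating over the $\sim\log(1/r')$ available units of $\log s$ then produces the factor $(1/r')^{C'/\eta}$. (We may assume $r'$ small and $\eta<\eta_0$; for $r'$ bounded below the estimate follows from a single application of the one-scale bound described at the end of this proof, with $C'$ chosen small enough.) Write $M(s):=\sup_{\Omega\cap B_s}\varphi$; it is finite for $s<1$ and, since $\Omega\cap B_{r'}\neq\emptyset$ and $\varphi>0$ on $\Omega$, strictly positive and non-decreasing, and $\sup_\Omega\varphi\ge M(c_2)$ for any fixed $c_2<1$, so it suffices to bound $M(c_2)/M(r')$ below. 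Extend $\varphi|_\Omega$ by zero to a function $v$ on $B_1$: as $\varphi$ vanishes continuously on $\partial\Omega\cap B_1$ and solves $L\varphi=0$ there, $v$ is a weak subsolution, $Lv\le 0$, the free-boundary contribution being a nonpositive distribution (the conormal derivative of $\varphi$ along $\partial\Omega$ points into $\Omega$) and the lower-order term being harmless for $\varepsilon$ small. We work with the quantities $H(0,\cdot),D(0,\cdot),\tilde N(0,\cdot),N(0,\cdot)$ of $v$ from Section~\ref{subsec:Prelim}.

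The heart of the argument is the claim that the hypothesis forces $H(0,\cdot)$ to grow at logarithmic rate $\gtrsim 1/\eta$, i.e. $\log\!\big(\sqrt{H(0,c_2)}/\sqrt{H(0,2r')}\big)\gtrsim\frac1\eta\log\frac1{r'}$ for some fixed $c_2\in(0,1/2)$. First, the co-area formula and the hypothesis at every scale --- $\int_0^s\Cal{H}^{n-1}(\Omega\cap\partial B_r)\,dr=|\Omega\cap B_s|\le\eta^{\,n-1}|B_s|$ for all $s\in(r',1)$ --- imply that the ``bad'' radii, those $s$ at which the slice $\Omega\cap\partial B_s$ occupies more than a fixed small fraction of the geodesic sphere $\partial B_s$, form a set of negligible relative logarithmic measure. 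At every ``good'' radius $s$ the slice has relative measure $\lesssim\eta^{\,n-1}$, so the Faber--Krahn inequality on $S^{n-1}$ (the small metric distortion at scale $s$ absorbed into the constant) bounds the first Dirichlet eigenvalue of $\Omega\cap\partial B_s$ below by $\gtrsim(\eta^{\,n-1})^{-2/(n-1)}s^{-2}=\eta^{-2}s^{-2}$. Splitting $|\nabla v|^2$ into its radial and spherical components and inserting these slice-eigenvalue bounds as Rayleigh-quotient inequalities into the standard first-order relations for $H(0,\cdot)$ and $D(0,\cdot)$ (cf. (\ref{eq: HL})), while using the almost-monotonicity (\ref{ineq:almost-mon}) of the frequency to accumulate the estimate over scales and absorb the sparse bad set, yields the asserted growth of $H$. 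Finally, two elementary comparisons close the loop: the local maximum principle (\ref{GT}) applied at scale $2r'$ (with $p=2$, using $v\ge 0$ and that $H(0,\cdot)$ is non-decreasing) gives $M(r')\lesssim {r'}^{-(n-1)/2}\sqrt{H(0,2r')}$, while $\sqrt{H(0,c_2)}\lesssim M(c_2)$ holds trivially since $v$ is subharmonic, so $\sup_{B_{c_2}}v=\sup_{\partial B_{c_2}}v$. Chaining these with the growth estimate for $H$ gives $M(c_2)/M(r')\gtrsim {r'}^{(n-1)/2}(1/r')^{c/\eta}\gtrsim(1/r')^{C'/\eta}$ once $\eta<\eta_0$ and $r'$ is small, completing the proof.

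The step I expect to be the genuine obstacle is ``solid-volume narrowness at every scale $\Rightarrow$ logarithmic growth of $H$ at rate $\gtrsim 1/\eta$''. The hypothesis only controls the $n$-dimensional volume of $\Omega\cap B_r$, whereas the mechanism really needs the $(n-1)$-dimensional measure of the slices $\Omega\cap\partial B_r$ to be small, and these can disagree at individual radii; reconciling them requires the co-area extraction of good scales, a quantitative control on how sparse the bad scales can be (so that frequency almost-monotonicity genuinely transports the estimate through them), and care with the metric distortion and with the sign of $q$ when passing to the zero-extension and invoking Faber--Krahn. This bookkeeping is exactly what Mangoubi's Theorem~3.2 of \cite{Man3} packages in iterated form. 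It should be stressed that the Faber--Krahn input is essential rather than cosmetic: the cruder bound $M(s)\le C\eta^{(n-1)/p}M(2s)$ coming straight from the local maximum principle, iterated over $\sim\log(1/r')$ dyadic scales, yields only an exponent of order $\log(1/\eta)$; it is the ``codimension-one width $\sim\eta$ forces growth rate $\sim 1/\eta$'' phenomenon, quantified through the spherical eigenvalue, that produces the sharp $C'/\eta$. As an alternative packaging of the same idea one may iterate a one-scale estimate directly: on each good dyadic annulus, a barrier of the form $A\,s^{\alpha}\psi(\theta)$ with $\psi$ the first Dirichlet eigenfunction of the thin slice and $\alpha\sim 1/\eta$ its homogeneity exponent, fed into the maximum principle, gives $M(2s)\ge 2^{c/\eta}M(s)$; chaining these over the $\gtrsim\log(1/r')$ good annuli and bridging the rest by monotonicity of $M$ gives the conclusion.
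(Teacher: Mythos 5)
A preliminary remark: the paper does not prove this statement. It is quoted as Theorem 3.2 of \cite{Man3} and used as a black box in the proof of Theorem \ref{newres}, so there is no in-paper argument to compare yours against; the comparison below is with Mangoubi's own proof (\cite{Man3}, building on \cite{Man2} and on Nadirashvili's generalization of the Hadamard three-circles theorem \cite{N}). Your sketch reconstructs that proof's mechanism essentially correctly, and you attribute it correctly: extend by zero to a nonnegative subsolution, pass from the solid-volume hypothesis to smallness of the spherical slices at most radii via the co-area formula and Chebyshev, convert slice-smallness into the lower bound $\lambda_1(\Omega\cap\partial B_s)\gtrsim \eta^{-2}s^{-2}$ by spherical Faber--Krahn, deduce logarithmic growth at rate $\gtrsim 1/\eta$ across the good scales while bridging the sparse bad scales by monotonicity of $H$, and close with the local maximum principle (\ref{GT}) and the trivial bound $\sqrt{H}\lesssim M$. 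You also correctly isolate the crux (volume control versus slice control) and correctly explain why naive iteration of the maximum principle only yields an exponent of order $\log(1/\eta)$ rather than $1/\eta$.

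The one step that, as literally written, would not close is the conversion of the slice-eigenvalue bound into growth of $H$. The first-order relation for $H'$ together with the spherical Rayleigh quotient only gives $D(0,r)\ge \int_0^r \Lambda s^{-2}H(0,s)\,ds$ with $\Lambda\sim\eta^{-2}$, and this does not bound the frequency $rD/H$ from below by $\sqrt{\Lambda}$ unless one already knows how fast $H$ grows; moreover the almost-monotonicity (\ref{ineq:almost-mon}) is stated (and true) for solutions of $L_1u=0$, not for the zero-extension $v$, whose frequency need not be monotone. What is actually needed is the second-order computation: differentiating $G(r):=\int_{\partial B_r}v\,\partial_r v$ and inserting the Rayleigh-quotient bound on each good sphere yields a convexity-type differential inequality for $\log H$ in $\log r$ whose characteristic exponent is $\alpha=\sqrt{((n-2)/2)^2+\Lambda}-(n-2)/2\sim 1/\eta$; this is precisely the generalized three-circles theorem, and it is the form in which \cite{Man2} and \cite{Man3} run the argument. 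For the same reason your alternative barrier $As^\alpha\psi(\theta)$ does not work verbatim: the slices $\Omega\cap\partial B_s$ vary with $s$ (their union over a dyadic annulus may well be all of $S^{n-1}$), so no single separated-variables supersolution dominates $v$, and one again needs the sphere-by-sphere eigenvalue version. With the growth step replaced by this second-order/three-circles argument, your outline becomes a complete proof.
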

\subsubsection{Almost inscribing wavelength ball} 
We finally recall some results discussing ``almost'' inscribed balls inside a given domain. More precisely, we start by recalling a celebrated theorem of Lieb (see \cite{L}), which considers the case of a domain $\Omega \subset \RR^n$ and states that there exists a point $x \in \Omega$, and a ball $B:= B(x,\frac{r}{\sqrt{\lambda_1(\Omega)}})$ of radius $\frac{r}{\sqrt{\lambda_1(\Omega)}}, $ (here $r> 0$ is sufficiently small) which is ``almost'' inscribed in $\Omega$, that is

\begin{equation}
	\frac{|B \cap \Omega |}{|B|} \geq 1 - \epsilon.
\end{equation}

Here $\lambda_1(\Omega)$ is the first Dirichlet eigenvalue of $\Omega$. Moreover, $\epsilon$ approaches $0$ as $r \rightarrow 0$.

A further related result was obtained in the paper \cite{MS} (see, in particular, Theorem 1.1 and Subsection 5.1 of \cite{MS}).

In ~\cite{GM}, a refinement of the above statement of Lieb was obtained stating that $x \in \Omega$ can be taken as any point where the first Dirichlet eigenfunction of $\Omega$ (assumed to be positive without loss of generality) reaches a maximum. 

Specifying these statements to nodal domains, we have:
\begin{theorem}\label{alfulins}
	Let $\dim M \geq 3, \epsilon_0 > 0 $ be fixed and $ x_0 \in \Omega_\lambda $ be such that $ |\varphi_\lambda(x_0)| = \text{max}_{\Omega_\lambda}|\varphi_\lambda| $. There exists $ r_0 = r_0 (\epsilon_0) $, such that
	\begin{equation}\label{Vol}
	\frac{|B_{r_0} \cap \Omega_\lambda |}{|B_{r_0}|} \geq 1 - \epsilon_0,
	\end{equation}
	where $ B_{r_0} $ denotes $ B\left(x_0, \frac{r_0}{\sqrt{\lambda}}\right) $.
\end{theorem}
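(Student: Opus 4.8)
The plan is to reduce the statement to a quantitative maximum principle for the extension of $\varphi_\lambda$ by zero outside $\Omega_\lambda$, exactly as in the proofs of Faber--Krahn-type inequalities. After replacing $\varphi_\lambda$ by $-\varphi_\lambda$ if necessary we may assume $\varphi_\lambda>0$ on $\Omega_\lambda$, and after multiplying by a constant that $\varphi_\lambda(x_0)=\V\varphi_\lambda\V_{L^\infty(\Omega_\lambda)}=1$. Then $\varphi_\lambda|_{\Omega_\lambda}$ is the first Dirichlet eigenfunction of $\Omega_\lambda$ and $\lambda_1(\Omega_\lambda)=\lambda$, so that $1/\sqrt\lambda$ is precisely the natural scale $1/\sqrt{\lambda_1(\Omega_\lambda)}$ of the domain. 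Set $\bar\varphi:=\varphi_\lambda\,\mathbf 1_{\Omega_\lambda}\in H^1(M)$; then $0\le\bar\varphi\le\mathbf 1_{\{\bar\varphi>0\}}$, $\bar\varphi(x_0)=1$, and $\{\bar\varphi>0\}\subseteq\Omega_\lambda$, so a lower bound on the ``mass'' of $\bar\varphi$ in $B_{r_0}$ will translate directly into the desired lower bound for $|B_{r_0}\cap\Omega_\lambda|$. (The hypothesis that $x_0$ be a maximum point of $\varphi_\lambda$ on $\Omega_\lambda$ is used exactly to guarantee $\bar\varphi\le\bar\varphi(x_0)=1$.)

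\emph{Step 1 (subsolution property).} First I would show that $\bar\varphi$ is a weak subsolution of $\Delta+\lambda$ on all of $M$, i.e.\ $\Delta\bar\varphi+\lambda\bar\varphi\ge 0$ in $\Cal D'(M)$ (here $\Delta$ denotes the Laplace--Beltrami operator and we keep the analyst's sign convention). Since $\partial\Omega_\lambda$ may contain singular points of $N_\varphi$, I would argue by exhaustion: by Sard's theorem, for a.e.\ $\epsilon>0$ the set $\Omega_\lambda^\epsilon:=\{x\in\Omega_\lambda:\varphi_\lambda(x)>\epsilon\}$ is compactly contained in $\Omega_\lambda$ with smooth boundary, on which the outward normal derivative of $\varphi_\lambda$ is $\le 0$. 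Integrating by parts on $\Omega_\lambda^\epsilon$ against a test function $\psi\ge 0$ gives $\int_M\nabla\big((\varphi_\lambda-\epsilon)^+\mathbf 1_{\Omega_\lambda}\big)\cdot\nabla\psi\le\lambda\int_M (\varphi_\lambda-\epsilon)^+\mathbf 1_{\Omega_\lambda}\,\psi+\lambda\epsilon\V\psi\V_{L^1}$, and letting $\epsilon\to 0$ (so that $(\varphi_\lambda-\epsilon)^+\mathbf 1_{\Omega_\lambda}\to\bar\varphi$ in $H^1$) yields the claim. This is the standard fact that an extended ground state is a subsolution of $\Delta+\lambda_1$, and could alternatively simply be quoted.

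\emph{Step 2 (quantitative comparison at the wavelength scale).} Rescale the metric by $\lambda$, so that $B_{r_0}=B(x_0,\tfrac{r_0}{\sqrt\lambda})$ becomes a unit-order geodesic ball $B$ in a metric $\tilde g$ which is $C^1$-close to the Euclidean one (the deviation is $O(1/\lambda)$ in normal coordinates), the eigenvalue equation becomes $-\Delta_{\tilde g}\tilde\varphi=\tilde\varphi$, and Step 1 becomes $\Delta_{\tilde g}\bar{\tilde\varphi}\ge-\bar{\tilde\varphi}$. On each concentric ball $B(x_0,\rho)$, $\rho\le r_0$, compare $\bar{\tilde\varphi}$ with $h_\rho+v_\rho$, where $h_\rho$ is the $\tilde g$-harmonic function with boundary values $\bar{\tilde\varphi}$ on $\partial B(x_0,\rho)$ and $v_\rho\ge 0$ solves $\Delta_{\tilde g}v_\rho=-\mathbf 1_{\{\bar{\tilde\varphi}>0\}}$ in $B(x_0,\rho)$ with zero boundary values; since $\bar{\tilde\varphi}\le\mathbf 1_{\{\bar{\tilde\varphi}>0\}}$ and by Step 1, $\bar{\tilde\varphi}-h_\rho-v_\rho$ is $\tilde g$-subharmonic with zero boundary values, hence $\le 0$ by the maximum principle. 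Evaluating at $x_0$ gives $1=\bar{\tilde\varphi}(x_0)\le h_\rho(x_0)+v_\rho(x_0)$, with $v_\rho(x_0)\le\V G^{\tilde g}_{B(x_0,\rho)}(x_0,\cdot)\V_{L^1}\lesssim\rho^2$ and $h_\rho(x_0)$ equal to the $\tilde g$-harmonic measure from $x_0$ of $\partial B(x_0,\rho)\cap\{\bar{\tilde\varphi}>0\}$. Integrating this inequality in $\rho\in(0,r_0)$ against the weight $|\partial B(x_0,\rho)|_{\tilde g}\,d\rho$ and using that for $\tilde g$ close to Euclidean the harmonic measure from the center is close to normalized surface measure (after which the coarea formula produces the solid volume fraction) gives $1\le\frac{|B\cap\{\bar{\tilde\varphi}>0\}|}{|B|}+Cr_0^2+o_\lambda(1)$, where $o_\lambda(1)\to 0$ as $\lambda\to\infty$. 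Undoing the rescaling and using $\{\bar\varphi>0\}\subseteq\Omega_\lambda$, this reads $\frac{|B_{r_0}\cap\Omega_\lambda|}{|B_{r_0}|}\ge 1-Cr_0^2-o_\lambda(1)$; choosing $r_0=r_0(\epsilon_0)$ with $Cr_0^2<\epsilon_0/2$ and then $\lambda$ large yields (\ref{Vol}), the finitely many remaining $\lambda$ being dispatched by shrinking $r_0$ further (each such $x_0$ being an interior point of its nodal domain). One may also bypass the explicit Green's-function computation by applying the local maximum principle (\ref{GT}) to $\bar{\tilde\varphi}$ with $p=1$ and $r_1/r_2\to 0$, whose constant degenerates to the Euclidean mean-value constant $1$, or simply transfer the refined Lieb-type theorem of \cite{GM, MS}, which puts the almost-inscribed ball at a maximum of the ground state, from Euclidean domains to $\Omega_\lambda$ at scale $1/\sqrt\lambda$ where the metric is essentially flat.

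\emph{Main obstacle.} The delicate point is Step 1: making rigorous that $\bar\varphi$ is a genuine weak subsolution of $\Delta+\lambda$ across $\partial\Omega_\lambda$, whose regular part is a hypersurface but which can carry singularities of the nodal set; the exhaustion by regular level sets $\{\varphi_\lambda=\epsilon\}$ handles this cleanly. A secondary, routine point is keeping the comparison in Step 2 quantitative and uniform in $\lambda$ despite the $O(1/\lambda)$ departure of the rescaled metric from the Euclidean one. Everything else -- domain monotonicity giving $\lambda_1(\Omega_\lambda)=\lambda$, the maximum principle, the coarea formula, and the bound $\V G_{B(x_0,\rho)}(x_0,\cdot)\V_{L^1}\lesssim\rho^2$ -- is standard.
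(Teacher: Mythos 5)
The paper does not actually prove Theorem \ref{alfulins}: it is quoted verbatim from Theorem 1.3 of \cite{GM} (which in turn refines Lieb \cite{L} and Maz'ya--Shubin \cite{MS}), together with the quantitative relation (\ref{rel}). So your proposal should be judged as a from-scratch reconstruction, and as such it is essentially correct and follows the same underlying route as the cited sources: extend the ground state by zero to a subsolution of $\Delta+\lambda$, rescale to the wavelength scale, and compare at the maximum point with the harmonic replacement plus a torsion-function correction, so that $1\leq \omega^{x_0}\bigl(\partial B_\rho\cap\{\bar\varphi>0\}\bigr)+C\rho^2$ on every concentric sphere; integrating in $\rho$ converts the harmonic-measure bound into the volume bound. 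The two places you flag as delicate (the weak subsolution property across the singular part of $\partial\Omega_\lambda$, and the uniform closeness of harmonic measure from the center to normalized surface measure for the rescaled metric) are indeed the only nontrivial technical points, and both are handled by standard arguments of exactly the kind you sketch. The role of the hypothesis that $x_0$ maximizes $|\varphi_\lambda|$ on $\Omega_\lambda$ is correctly identified: it gives both $\bar\varphi\leq\bar\varphi(x_0)$ (so the harmonic part is dominated by the harmonic measure of the positivity set) and the normalization $\bar\varphi(x_0)=1$ on the left-hand side of the comparison.

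One caveat worth recording: your argument yields the relation $\epsilon_0\sim r_0^2$, i.e.\ $r_0\sim\epsilon_0^{1/2}$, whereas the paper imports from Corollary 1.4 of \cite{GM} the stronger relation $r_0=C\epsilon_0^{\frac{n-2}{2n}}$ of (\ref{rel}) (a larger admissible $r_0$ for the same volume error, since $\frac{n-2}{2n}<\frac12$). This is immaterial for Theorem \ref{alfulins} as stated, which only asserts the existence of some $r_0(\epsilon_0)$, but it does matter downstream: the proof of Theorem \ref{newres} feeds (\ref{rel}) into the choice of parameters, and substituting your weaker relation would degrade the exponent $\beta(n)$ there to $\frac{n-1}{2}$. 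So your proof establishes the quoted statement but not the sharper quantitative form of it that the paper actually uses.
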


It is also important for our discussion below to have a relation between  $\epsilon_0$ and $r_0$. Referring to Corollary 1.4 in ~\cite{GM}, we have that they are related by
\beq\label{rel}
r_0 = C \epsilon_0^{\frac{n - 2}{2n}},
\eeq
where $C$ is a constant depending only on $(M, g)$.

\subsubsection{Idea of proof of Theorem \ref{newres}} Before going into the details of the proof, let us first outline the main ideas. Let us define $ B := B(x_0, \frac{r_0}{\sqrt{\lambda}})$ where $x_0$ is a point of maximum as stated in Theorem \ref{newres} and $r_0 > 0$ is a sufficiently small number. Also recall that $\varphi_\lambda(x_0)$ is assumed to be bounded below in terms of $\eta$.

Now, roughly speaking, we will see that if $r_0$ is sufficiently small in terms of $\eta$, then $\varphi_\lambda$ does not vanish in $\frac{1}{4} B$, a concentric ball of quarter radius. This will imply the claim of the Theorem.

To this end, we argue by contradiction (i.e. we assume that $\varphi_\lambda$ vanishes in $\frac{1}{4} B$) and follow the three steps below:

\begin{enumerate}
\item First, Theorem \ref{alfulins} above tells us that we can ``almost'' inscribe a ball $B = B(x_0,\frac{r_0}{\sqrt{\lambda}})$ inside $\Omega_\lambda$, up to the error of certain ``spikes'' of total volume $\epsilon_0 |B|$ entering the ball, where $\epsilon_0$ and $\rho$ are related by (\ref{rel}). In particular, if we assume w.l.o.g. that $\varphi_\lambda$ is positive on $\Omega_\lambda$, then the volume $|\{ \varphi_\lambda < 0 \} \cap B|$ is relatively small and does not exceed $\epsilon_0 |B|$.

\item The second step consists in showing that the sup norms of $\varphi^{-}$ and $\varphi^{+}$ 
in the spikes are comparable. More formally, observe that on each connected component of $\frac{1}{4}B \setminus \Omega_\lambda$ (i.e., on each spike in $\frac{1}{4}B$), $\varphi_\lambda$ can be positive or negative a priori. However, by a relatively simple argument involving the mean value property of harmonic functions and standard elliptic maximum principles, we show that on $\frac{1}{4}B \setminus \Omega_\lambda$, $\sup \varphi_\lambda^{-} \lesssim \sup\varphi_\lambda^{+}$.  

\item 
For the third step, we begin by noting that if we can show that the doubling exponent of $\varphi_\lambda$ in $\frac{1}{8}B$ is bounded above by a constant, then the asymmetry estimate (\ref{man}) will give us that the set $\{\varphi_\lambda < 0\} \cap \frac{1}{4}B$ has a large volume, which contradicts Step ($1$) above. This will be a contradiction to our assumption that $\varphi_\lambda$ vanishes somewhere in $\frac{1}{4} B$, and thus we finally conclude that $\frac{1}{4}B$ is fully inscribed inside $\Omega_\lambda$.

Now, the bounded doubling exponent will be ensured, if $\varphi_\lambda(x_0)$ controls (up to a constant) all the values of $\varphi_\lambda$ inside $\frac{1}{4}B$. Using the input from Step ($2$) above as well as the a priori assumption on $\varphi_\lambda(x_0)$, it suffices to ensure that $\varphi^{+}$ is suitably bounded. This is where we bring in the rapid growth in narrow domains result (Theorem \ref{rapgrow}).
\end{enumerate}

\subsubsection{Proof of Theorem \ref{newres}}\label{main}

\begin{proof}

{\bf Step $1$: An almost inscribed ball:}

As before, let $x_0$ denote the max point of $\varphi_\lambda$ in the nodal domain $\Omega_\lambda$. Let us assume the sup-norm bound (\ref{cond}) and let us set $B := B(x_0, \frac{r_0}{\sqrt{\lambda}})$ be a ball centered at $x_0$ and of radius $\frac{r_0}{\sqrt{\lambda}}$, where $r_0 > 0$ is sufficiently small and determined below. Further, let us denote the non-inscribed ``error-set'' by $X := B \setminus \Omega_\lambda$.

We start by making the following choice of parameters: we select $0 < \epsilon_0 \leq (\eta C^{'})^{n - 1} $ with a corresponding $r_0 := C \epsilon_0^{\frac{n-2}{2n}}$ (prescribed by (\ref{rel})), where $C^{'}$ is the constant coming from Theorem \ref{rapgrow}; moreover we assume that $0 < \eta \leq \eta_0$ for some fixed small positive number $\eta_0$, so that by Theorem \ref{alfulins}, the relative volume of the ``error'' set $X$ is sufficiently small, i.e. 
\beq \label{err}
\frac{|X \cap \frac{1}{4}B|}{|\frac{1}{4}B|} \lesssim \frac{4^n|X \cap B|}{|B|} \leq 4^n \epsilon_0 =: C_2,
\eeq
where $C_2 > 0$ is appropriately chosen below.

We claim that in fact $\varphi_\lambda$ does not vanish in $\frac{1}{4} B $, the concentric ball of a quarter radius.

To prove this, we will argue by contradiction - that is, let us suppose that $\varphi_\lambda$ vanishes somewhere in $\frac{1}{4} B$.

{\bf Step $2$: Comparability of $\varphi_\lambda^+$ and $\varphi_\lambda^-$:}
By assuming the contrary, let $x$ be a point in $\overline{X} \cap \frac{1}{4}B$ lying on the boundary of a spike, that is, $\varphi_\lambda (x) = 0$.
Consider a ball $B^{'}$ around $x$ with radius $\frac{r_0}{2\sqrt{\lambda}}$.
Since $\varphi_\lambda (x) = 0$, we have that (up to constants depending on $(M, g)$),
\beq
\frac{1}{|B^{'}|}\int_{B^{'}} \varphi_\lambda^{-} \sim \frac{1}{|B^{'}|}\int_{B^{'}} \varphi_\lambda^{+}.
\eeq
This follows from mean value properties of harmonic functions; for a detailed proof, see Lemma 5 of ~\cite{CM}.

Now, let $B^{''}$ be a ball slightly smaller than and fully contained in $B^{'}$. Using the local maximum principle (\ref{GT}), we have that (up to constants depending on $(M, g)$),
\beq
\sup_{B^{''}} \varphi_\lambda^{-} \lesssim \frac{1}{|B^{'}|} \int_{B^{'}} \varphi_\lambda^{-} \lesssim \frac{1}{|B^{'}|}\int_{B^{'}} \varphi_\lambda^{+} \leq \sup_{B^{'}} \varphi_\lambda^{+}.
\eeq
This shows that in order to bound $\varphi_\lambda^{-}$, it suffices to bound $\varphi_\lambda^{+}$.
This finishes Step ($2$). 

{\bf Step $3$: Controlled doubling exponent and conclusion:}

Our aim is to be able to bound $\sup_{\frac{1}{4} B}\varphi_\lambda^{+}$ in terms of $\varphi_\lambda (x_0)$, as that would give us control of the doubling exponent of $\varphi_\lambda$ on $\frac{1}{8}B$. In other words, we wish to 
establish that
\beq\label{want}
\sup_{\frac{1}{4}B} \varphi_\lambda^{+} \leq C\varphi_\lambda (x_0),
\eeq
where $C$ is a constant independent of $\lambda$.

If $X \cap \frac{1}{2}B \cap \{\varphi > 0\} = \emptyset$, 
then (\ref{want}) follows immediately by definition. 
Otherwise, calling $X^{'} := X \cap \frac{1}{2}B$, 
let $\Omega^{'}_\lambda$ represent another nodal domain on which $\varphi_\lambda$ is positive and which intersects $X^{'}$. In other words, 
$\Omega^{'}_\lambda \cap \frac{1}{2}B$ gives us a spike entering 
$\frac{1}{2}B$ which $\varphi_\lambda$ is positive, and our aim is to obtain bounds on this spike. 


Observe that (\ref{err}) implies that the volume of the spike 
$\Omega^{'}_\lambda \cap \frac{1}{2}B$ is small compared to 
$\frac{1}{2}B$, and this allows us to invoke Theorem \ref{rapgrow}. 
We see that
\begin{align*}
    2^{1/\eta}\varphi_\lambda(x_0) & \gtrsim \V\varphi_\lambda\V_{L^\infty(M)} \text{  (by hypothesis (\ref{cond})})\\
    & \geq \sup_{\Omega^{'}_\lambda} \varphi_\lambda \geq   2^{1/\eta}\sup_{\Omega^{'}_\lambda \cap \frac{1}{2}B} \varphi_\lambda 
    \geq 2^{1/\eta}\sup_{\Omega^{'}_\lambda \cap \frac{1}{4}B} \varphi_\lambda \text{  (by applying Theorem \ref{rapgrow}}).
\end{align*}

Now (\ref{want}) follows, which implies that the growth is controlled in the ball $\frac{1}{8}B$, 
that is,
\beq\label{lab}
\beta_{1/8}(\varphi_\lambda) = \frac{\sup_{\frac{1}{4}B}|\varphi_\lambda|}{\sup_{\frac{1}{8}B}|\varphi_\lambda|} \leq c_1,
\eeq
where $c_1$ depends on $(M, g)$ 
and not on $\epsilon_0$ or $\lambda$ (in particular, not on $r_0, \eta$).

Now, we bring in the asymmetry estimate (\ref{man}), which, together with (\ref{lab}), tells us that
\beq\label{35}
\frac{|\{\varphi_\lambda < 0\} \cap \frac{1}{4}B|}{|\frac{1}{4}B|} \geq c_2,
\eeq
where $c_2$ is a constant depending only on $c_1$ and $(M, g)$. But selecting the constant $C_2$ to be smaller than $c_2$ we see that (\ref{35}) contradicts (\ref{err}). Hence, we obtain a contradiction with the fact that the function $\varphi_\lambda$ vanishes inside $\frac{1}{4} B$.

Finally, this proves that with the initial choice of parameters, there is an inscribed ball of radius $ \frac{r_0}{4 \sqrt{\lambda}}$ inside $\Omega_\lambda$. By construction, we had that $r_0 \sim \eta^{\frac{(n - 1)(n - 2)}{2n}} = \eta^{\beta(n)}$.

Combined with the inner radius estimates in \cite{Man2}, this proves the claim of Theorem \ref{newres}.

\end{proof}

\section{Appendix: number of zeros over balls with large doubling exponent}

We address the proof of Theorem \ref{thm:Number-of-Zeros}. We will essentially just follow and adjust Section 6, \cite{L1} - there Theorem \ref{thm:Number-of-Zeros} was stated as a remark. For mere completeness, we will recall all the relevant statements.

Let us briefly give an overview of how the proof proceeds.

First, we consider a harmonic function in a ball and gather a few estimates on the way $ u $ grows near a point of maximum. The discussion here involves classical harmonic function estimates as well as scaling of the frequency function $ N(p, r) $ (cf. Subsection \ref{subsec:Prelim}) and the doubling numbers.

Second, let us consider a cube $ Q $ and divide it into small equal subcubes. We recall a combinatorial result (Theorem 5.2, \cite{L1}) which, roughly speaking, gives quantitative estimate on the number of small bad subcubes (i.e., subcubes with large doubling exponent) of a given cube $ Q $.

Third, we utilize the results in the first two steps to prove Theorem \ref{thm:Number-of-Zeros}.

A few words regarding notation: given a point $O \in M$, we take a small enough coordinate chart $(U, \psi)$ around $O$ such that the Riemannian metric $g$ on the chart is comparable to the Euclidean metric in the following sense: given $\nu > 0$, there is a sufficiently small $R_0 = R_0 (\nu, M, g, O)$ such that $(1 - \nu)d_g(x, y) < d_{\text{Euc}}(\psi(x),\psi (y)) < (1 + \nu)d_g(x, y)$ for any two distinct points $x, y \in B_g(O, R_0)$. Under this metric comparability, we will drop the subscript ``$g$'' henceforth, and will describe ``cubes'' and ``boxes'' and their partitions, and such combinatorial ideas directly on the manifold $M$.

\subsection{Growth of harmonic functions near a point of maximum}\label{subsec:Growth}

Let us start by recalling the following observation (Lemma 3.2, \cite{L1}). Let $ B(p, 2r) \subset B(O, R_0) $ where the frequency function satisfies $ N(p, \frac{r}{2}) > 10 $. Then there exists numbers $ s \in [r, \frac{3}{2}) $ and $ N \geq 5 $ so that
\begin{equation}
	N \leq N(p, t) \leq 2e N,
\end{equation}
where the parameter $ t $ is any number within the interval $ I $ given by
\begin{equation}
	I := \left( s(1 - \frac{1}{1000 \log^2 N}), s(1 + \frac{1}{1000 \log^2 N}) \right).
\end{equation}

In words, we find and work in a small spherical layer where the frequency is comparable to $ N $.

Recalling the function $ H(p, t) = \int_{\partial B(p,t)} u^2 dS$, it follows from the definition of the frequency function that
\begin{equation}
	\frac{H(x, r_2)}{H(x, r_1)} = \exp \left( 2 \int_{r_1}^{r_2} \frac{N(x, r)}{r} dr  \right).
\end{equation}
Combining this with the control over $ N $ in the interval $ I $, we obtain
\begin{equation}
	\left( \frac{t_2}{t_1} \right)^{2N} \leq \frac{H(p, t_2)}{H(p, t_1)} \leq \left( \frac{t_2}{t_1} \right)^{4eN},
\end{equation}
where $ t_1 < t_2 $ and $ t_1, t_2 \in I $.

Now, let us consider a point of maximum $ x \in \partial B(p, s) $, such that
\begin{equation}
	\sup_{y \in \bar{B}(p, s)} |u(y)| = |u(x)| =: K.
\end{equation}

We now look at concentric spheres of radii $ s^- := s(1-\delta) $ and $ s^+ := s(1 + \delta) $ where $ \delta $ is a small number in the interval $ [\frac{1}{10^6 \log^{100} N}, \frac{1}{10^6 \log^{2} N}] $. We can estimate $ \sup_{B(p, s^+)} |u| $ and $ \sup_{B(p, s^-)} |u| $ in terms of $ K $:

\begin{lemma} [Lemma 4.1, \cite{L1}]
	There exist $ c, C > 0 $ depending on $ M, g, n, O, R_0 $, such that
	\begin{equation} \label{eq:9}
		\sup_{B(p, s^-)} |u| \leq CK 2^{-c \delta N},
	\end{equation}
	\begin{equation}
		\sup_{B(p, s^+)} |u| \leq CK 2^{C \delta N}.
	\end{equation}
\end{lemma}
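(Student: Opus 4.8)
The plan is to prove the two bounds in the Lemma by exploiting the frequency comparison $N \le N(p,t) \le 2eN$ on the interval $I$, together with the integral formula for $H(p,t)$ and standard elliptic estimates relating $\sup_{B(p,t)}|u|$ to $H(p,t)$. First I would fix the point of maximum $x \in \partial B(p,s)$ with $|u(x)| = K$ and observe that $H(p,s) \sim K^2 s^{n-1}$ up to constants: the upper bound $H(p,s) \le \omega_{n-1} s^{n-1} K^2$ is immediate, while the lower bound $H(p,s) \gtrsim s^{n-1}K^2$ follows from a Harnack-type / gradient estimate for harmonic functions, since the maximum of $|u|$ over $\bar B(p,s)$ is comparable to its $L^2$-average on a slightly larger sphere (here one uses that $N(p,\cdot)$ is bounded on $I$, so $u$ cannot oscillate too wildly between $\partial B(p,s^-)$ and $\partial B(p,s^+)$).

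Next I would estimate $\sup_{B(p,s^-)}|u|$. By the maximum principle $\sup_{B(p,s^-)}|u| = \sup_{\partial B(p,s^-)}|u|$, and by a standard elliptic bound this is $\lesssim \left(\frac{1}{s^{n-1}}H(p,s^-)\right)^{1/2}$ times a harmless constant (again using that the frequency is controlled, so sup and $L^2$-mean on a sphere are comparable at the scale $s^-$, which is within $I$). Now apply the two-sided comparison
\begin{equation}
	\left( \frac{t_2}{t_1} \right)^{2N} \leq \frac{H(p, t_2)}{H(p, t_1)} \leq \left( \frac{t_2}{t_1} \right)^{4eN}
\end{equation}
with $t_1 = s^- = s(1-\delta)$ and $t_2 = s$: this gives $H(p,s^-) \le H(p,s)(1-\delta)^{2N} \le \omega_{n-1}s^{n-1}K^2 (1-\delta)^{2N}$. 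Since $(1-\delta)^{2N} \le e^{-2\delta N} \le 2^{-c\delta N}$ for a suitable $c = c(n) > 0$, we obtain $\sup_{B(p,s^-)}|u| \le CK 2^{-c\delta N}$ after taking square roots and absorbing the dimensional constants, which is (\ref{eq:9}).

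For the second inequality the argument is symmetric: $\sup_{B(p,s^+)}|u| \lesssim \left(\frac{1}{s^{n-1}}H(p,s^+)\right)^{1/2}$ and $H(p,s^+) \le H(p,s)(1+\delta)^{4eN} \le \omega_{n-1}s^{n-1}K^2(1+\delta)^{4eN}$, so $\sup_{B(p,s^+)}|u| \le CK(1+\delta)^{2eN} \le CK 2^{C\delta N}$ for an appropriate $C = C(n)>0$. The main obstacle I anticipate is making rigorous the comparability between $\sup$-norms on spheres and $L^2$-means on spheres at the scales $s^\pm$ — this is where the hypothesis that the frequency is pinned to size $N$ on the layer $I$ is essential, and one must check that the small width $\delta$ of that layer (of order $\log^{-2}N$ to $\log^{-100}N$) is enough room to run the elliptic/Harnack comparison without the error swallowing the gain $2^{\pm c\delta N}$; since the relevant constants here are independent of $N$ while $\delta N \to \infty$, this works, but it requires care. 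All of this is carried out in Lemma 4.1 of \cite{L1}, which we are quoting, so for our purposes we may simply invoke it.
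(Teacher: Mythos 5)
Your proposal is correct and follows essentially the same route as the paper, which itself only sketches the argument (the scaling inequality for $H(p,t)$ driven by the frequency being pinned between $N$ and $2eN$ on the layer $I$, combined with classical sup--$L^2$ elliptic estimates) and defers the details to Lemma 4.1 of \cite{L1}. Two harmless imprecisions worth noting: $\sup_{B(p,s^{\pm})}|u|$ must be compared to $H(p,\cdot)$ on a \emph{slightly larger} sphere (still inside $I$), which only costs another factor of the same form $2^{O(\delta N)}$ plus a power of $\delta^{-1}$ that is negligible against $2^{c\delta N}$; and the nontrivial lower bound $H(p,s)\gtrsim K^{2}s^{n-1}$ that you assert via a Harnack-type argument is not actually needed, since both stated inequalities are upper bounds on suprema and therefore only use the trivial direction $H(p,s)\leq C K^{2}s^{n-1}$.
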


\begin{proof}[Sketch of Proof]
	The proof uses the above scaling for $ H(p, t) $ and classical estimates for harmonic functions. For a detailed discussion we refer to \cite{L1}.
\end{proof}

Let us recall the classical doubling number $ \mathcal{N}(x, r) $ (cf. Subsection \ref{subsec:Prelim}), which was defined as
\begin{equation}
	2^{\mathcal{N}(x, r)} = \frac{\sup_{B(x, 2r)} |u|}{\sup_{B(x, r)} |u|}.
\end{equation}

Let us recall the following result (cf. Appendix, \cite{L2}):

\begin{lemma}
	Let $ \epsilon > 0 $ be fixed. There exist numbers $ R_0 > 0, C > 0 $ such that for $ r_1, r_2 $ with $ 2r_1 \leq r_2 $ and $ B(x, r_2) \subset B(O, R_0) $, we have the following estimate
	\begin{equation}
		\left( \frac{r_2}{r_1} \right)^{\mathcal{N}(x, r_1)(1-\epsilon) - C} \leq \frac{\sup_{B(x, r_2)} |u|}{\sup_{B(x, r_1)} |u|} \leq \left( \frac{r_2}{r_1} \right)^{\mathcal{N}(x, r_2)(1+\epsilon) + C}.
	\end{equation}
	
	In particular,
	\begin{equation}
		\mathcal{N}(x, r_1)(1-\epsilon) - C \leq \mathcal{N}(x, r_2)(1+\epsilon) + C.
	\end{equation}
\end{lemma}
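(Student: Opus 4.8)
The plan is to prove the two-sided chain of inequalities by comparing each of the suprema $\sup_{B(x,r_1)}|u|$ and $\sup_{B(x,r_2)}|u|$ against a common reference quantity built from the function $H(x,r)=\int_{\partial B(x,r)}u^2\,dS$, and then to invoke the frequency-to-doubling dictionary recorded earlier in Section~\ref{subsec:Prelim} (equations (\ref{13}), (\ref{14}), (\ref{eq: HL})). The first step is to recall the identity
\[
\frac{H(x,t_2)}{H(x,t_1)}=\exp\!\left(2\int_{t_1}^{t_2}\frac{N(x,r)}{r}\,dr\right),
\]
already used in the Appendix, which turns monotonicity-type control of the frequency $N(x,\cdot)$ into power-type control of $H$. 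The almost-monotonicity (\ref{ineq:almost-mon}) gives, for $t_1<t_2$ in a small enough ball, $N(x,t)\le (1+\ep')N(x,t_2)$ and $N(x,t)\ge (1-\ep')N(x,t_1)$ on the relevant ranges, so integrating yields
\[
\left(\frac{t_2}{t_1}\right)^{2(1-\ep')N(x,t_1)}\ \lesssim\ \frac{H(x,t_2)}{H(x,t_1)}\ \lesssim\ \left(\frac{t_2}{t_1}\right)^{2(1+\ep')N(x,t_2)}.
\]

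The second step is to pass between $\sup_{B(x,r)}|u|$ and $H(x,r)^{1/2}$. For harmonic $u$ the maximum principle gives $\sup_{B(x,r)}|u|\ge \big(|\partial B(x,r)|^{-1}H(x,r)\big)^{1/2}$, i.e. $\sup_{B(x,r)}|u|\gtrsim r^{-(n-1)/2}H(x,r)^{1/2}$ up to a metric-comparability constant; in the other direction, a standard elliptic estimate on the annulus (or the subharmonicity of $|u|$ together with a mean-value / $L^2$-to-$L^\infty$ bound, as in the Bernstein-type inequalities of \cite{L1}) gives $\sup_{B(x,r)}|u|\lesssim r^{-(n-1)/2}H(x,\theta r)^{1/2}$ for a fixed dilation factor $\theta\in(1,2)$, say. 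Absorbing the loss in $\theta$ into the exponents (this is exactly where the extra additive constant $C$ and the $(1\pm\ep)$ factors in the statement come from — the ratio $H(x,\theta r)/H(x,r)$ is itself controlled by a bounded power of $\theta$ times a factor $2^{C}$ via Step~1 and the frequency bound), one obtains
\[
\sup_{B(x,r)}|u|\ \sim\ r^{-(n-1)/2}H(x,r)^{1/2}
\]
up to multiplicative constants of the form $2^{\pm C}$ and harmless powers of $r$ that cancel when taking ratios. Combining this with Step~1 and writing everything in terms of $\mathcal N(x,r_i)$ via the relation $\mathcal N(x,r)\sim N(x,r)$ (which follows from (\ref{13}), (\ref{14}), valid once the doubling number is large, and is otherwise trivially absorbed into $C$) yields the displayed two-sided bound; the ``in particular'' statement is then immediate by taking logarithms, dividing by $\log(r_2/r_1)$, and using $\mathcal N(x,r_1)\le \frac{\log(\sup_{B(x,r_2)}|u|/\sup_{B(x,r_1)}|u|)}{\log(r_2/r_1)}$ together with the matching upper bound on that same ratio.

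The main obstacle, and the point requiring the most care, is the bookkeeping of the constants $\ep$ and $C$: one must verify that the passage from $H$ to sup-norms, the dilation factor $\theta$, and the small-frequency regime (where $N\sim\tilde N$ may fail) all contribute only a \emph{bounded additive} error $C$ and a \emph{multiplicative} distortion $(1\pm\ep)$ in the exponent, uniformly for $B(x,r_2)\subset B(O,R_0)$ with $2r_1\le r_2$. This is precisely the kind of estimate carried out in the Appendix of \cite{L2}, so the cleanest route is to quote that reference for the detailed constant-tracking, presenting here only the structural argument above; alternatively one can give the self-contained version by fixing $\ep'=\ep/3$, choosing $R_0$ small enough that (\ref{ineq:almost-mon}) holds with parameter $\ep'$ on $(0,R_0)$, and checking that each of the finitely many lossy steps costs at most $\ep/3$ in the exponent plus an $O(1)$ additive term.
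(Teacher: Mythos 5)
The paper does not actually prove this lemma: it is quoted verbatim from the Appendix of \cite{L2} ("Let us recall the following result"), so there is no in-paper argument to compare against, and your sketch reproduces the standard route used in that reference — integrate the frequency via the identity for $H(x,t_2)/H(x,t_1)$, apply almost-monotonicity (\ref{ineq:almost-mon}), and pass between $\sup_{B(x,r)}|u|$ and $H(x,r)^{1/2}$ with a dilation factor $\theta$ close to $1$. The architecture is sound. Two cautions on the details you defer: first, the relations (\ref{13}) and (\ref{14}) are only comparisons up to \emph{multiplicative} constants, so invoking them literally would degrade the exponents to $c\,\mathcal{N}$ and $C\,\mathcal{N}$ rather than the required $(1\pm\epsilon)\mathcal{N}\pm C$; the sharp form must come entirely from your Step 1--2 mechanism with $\theta\to 1$. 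Second, in the lower bound the doubling exponent $\mathcal{N}(x,r_1)$ is controlled from above by $N$ at a radius slightly \emph{larger} than $2r_1$, while the growth from $r_1$ to $r_2$ is controlled from below by $N$ on $[r_1,r_2]$; since almost-monotonicity only says $N$ is almost increasing, one must split off the first doubling step $r_1\to 2r_1$ (which contributes exactly $\mathcal{N}(x,r_1)\log 2$) and apply the monotonicity only on $[2\theta r_1, r_2]$ to avoid comparing radii in the wrong direction. With those points attended to — or, as you suggest and as the paper itself does, by citing the Appendix of \cite{L2} for the constant-tracking — the proposal is acceptable.
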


As a straightforward corollary of the above discussion we obtain

\begin{lemma}
	There is a constant $ C = C(M, g, n) > 0 $ such that
	\begin{equation}
		\sup_{B(x, \delta s)} |u| \leq K 2^{C\delta N + C}.
	\end{equation}
	Moreover, for any $ \tilde{x} $ with $ d(x, \tilde{x}) \leq \frac{\delta s}{4} $, we have
	\begin{equation} \label{eq:14}
		\mathcal{N}(\tilde{x}, \frac{\delta s}{4}) \leq C \delta N + C,
	\end{equation}
	\begin{equation} \label{eq:15}
		\sup_{B(\tilde{x}, \frac{\delta s}{10 N})} |u| \geq K 2^{-C \delta N \log N - C}.
	\end{equation}
\end{lemma}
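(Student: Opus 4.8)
The plan is to derive all three estimates as consequences of the preceding Lemma (the Lemma 4.1-type bounds for $\sup_{B(p,s^{\pm})}|u|$) together with the monotonicity-type comparison of doubling numbers stated just above. First I would establish $\sup_{B(x,\delta s)}|u| \leq K 2^{C\delta N + C}$. Since $x \in \partial B(p,s)$ is the point where $|u|$ attains its maximum over $\bar B(p,s)$ with value $K$, the ball $B(x,\delta s)$ is contained in $B(p, s^+) = B(p, s(1+\delta))$; hence $\sup_{B(x,\delta s)}|u| \leq \sup_{B(p,s^+)}|u| \leq CK 2^{C\delta N}$, and absorbing the constant $C$ into an additive $2^{C}$ factor gives the claimed form. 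This is the easy part and is essentially just a containment of balls plus the previous lemma.

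Next I would handle the doubling-number bound \eqref{eq:14}. For $\tilde x$ with $d(x,\tilde x) \leq \frac{\delta s}{4}$, the ball $B(\tilde x, 2\cdot\frac{\delta s}{4}) = B(\tilde x, \frac{\delta s}{2})$ still sits inside $B(x,\delta s) \subset B(p,s^+)$, so its sup is bounded above by $K2^{C\delta N + C}$. For the lower bound on $\sup_{B(\tilde x, \frac{\delta s}{4})}|u|$ I would compare downward to a sphere near radius $s^- = s(1-\delta)$: the classical comparison lemma above (applied with the pair of radii $\frac{\delta s}{4}$ and something of order $s$, and with a fixed small $\epsilon$) converts the lower bound $\sup_{B(\tilde x, \text{small})}|u| \geq$ (something) into a bound on $\mathcal N(\tilde x, \frac{\delta s}{4})$; combining the upper bound just obtained with the fact that $|u|$ near $\tilde x$ cannot be too small at the relevant intermediate scale (because $x$ realizes the global max $K$ over $\bar B(p,s)$ and $\tilde x$ is at distance $\lesssim \delta s$ from $x$) yields $\mathcal N(\tilde x, \frac{\delta s}{4}) \leq C\delta N + C$. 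The key point is that both numerator and denominator of the doubling quotient at scale $\delta s$ around $\tilde x$ are pinned between $K2^{-O(\delta N)}$ and $K2^{O(\delta N)}$.

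Finally, for \eqref{eq:15} I would exploit the frequency bound $N(\tilde x, \cdot) \lesssim \mathcal N \lesssim \delta N$ near $\tilde x$ together with the $H$-ratio identity $\frac{H(\tilde x, r_2)}{H(\tilde x, r_1)} = \exp\!\big(2\int_{r_1}^{r_2}\frac{N(\tilde x,r)}{r}\,dr\big)$ to compare the scale $\frac{\delta s}{10N}$ with the scale $\frac{\delta s}{4}$: the ratio of radii is $\sim N$, so $\log$ of the sup-ratio is at most $(\text{frequency at that scale})\cdot\log N \lesssim \delta N\log N$, whence $\sup_{B(\tilde x, \frac{\delta s}{10N})}|u| \geq 2^{-C\delta N\log N - C}\sup_{B(\tilde x, \frac{\delta s}{4})}|u| \geq K2^{-C\delta N\log N - C}$ after plugging in the lower bound from the previous step. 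I expect the main obstacle to be bookkeeping the additive constants $C$ and the $\epsilon$-losses in the comparison lemma so that they stay genuinely independent of $N$, $\delta$ and the point — in particular making sure the $(1\pm\epsilon)$ factors in the doubling comparison do not interact badly with the $\delta N$ terms; one fixes $\epsilon$ small once and for all and checks that every invocation is at scale ratios bounded away from $1$, so that the losses are purely additive in $\mathcal N$. All of this is a routine consolidation of the estimates already recalled from \cite{L1, L2}, so I would keep the write-up short and refer back to those sources for the harmonic-function estimates themselves.
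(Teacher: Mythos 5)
Your outline is correct, and it is essentially the standard argument: the paper itself gives no proof of this lemma but simply refers to Lemma 4.2 of \cite{L1}, and what you describe is the proof found there. Part (1) is, as you say, just the inclusion $B(x,\delta s)\subset B(p,s(1+\delta))$ plus the preceding Lemma 4.1-type bound. For \eqref{eq:14} your detour ``comparing downward to a sphere near $s^-$'' is unnecessary: since $d(x,\tilde{x})\leq \frac{\delta s}{4}$, the maximum point $x$ lies in $\overline{B}(\tilde{x},\frac{\delta s}{4})$, so the denominator of the doubling quotient satisfies $\sup_{B(\tilde{x},\delta s/4)}|u|\geq |u(x)|=K$ directly, while the numerator is controlled by part (1) via $B(\tilde{x},\frac{\delta s}{2})\subset B(x,\delta s)$; that is the whole proof. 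For \eqref{eq:15}, iterating the doubling bound (or applying the quoted comparison lemma once with $r_2/r_1\sim N$) produces an exponent $-C\delta N\log N - C\log N$ rather than $-C\delta N\log N - C$; you should note explicitly that the extra $C\log N$ is absorbable into the leading term because $\delta\geq \frac{1}{10^6\log^{100}N}$ forces $\delta N\log N\gg \log N$ for $N$ large. With those two small adjustments the write-up is complete.
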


For a proof we refer to Lemma 4.2, \cite{L1}.

\subsection{An estimate on the number of bad cubes}

Let $ Q $ be a  given cube. We define the doubling index $ N(Q) $ of the cube $ Q $ by
\begin{equation}
	N(Q) := \sup_{x \in Q, r \leq \diam(Q)} \log \frac{\sup_{B(x, 10nr)} |u|}{\sup_{B(x, r)} |u|}.
\end{equation}

Clearly, $ N(Q) $ is monotonic in the sense that if a cube $ Q_1 $ is contained in the cube $ Q_2 $, then $ N(Q_1) \leq N(Q_2) $. Furthermore, if a cube $ Q $ is covered by a collection of cubes $ \{ Q_i \} $ with $ \diam(Q_i) \geq \diam(Q) $, then there exists a cube $ Q_i $ from the collection, such that $ N(Q_i) \geq N(Q)$.

The main result in this subsection is

\begin{theorem}[Theorem 5.3, \cite{L1}] \label{thm:5.2}
	There exist constants $ c_1, c_2, C > 0 $ and a positive integer $ B_0 $, depending only on the dimension $ n $, and positive numbers $ N_0 = N_0(M, g, n, O), R = R(M, g, n, O) $ such that for any cube $ Q \subset B(O, R) $ the following holds:
	
	If we partition $ Q $ into $ B^n $ equal subcubes, where $ B > B_0 $, then the number of subcubes with doubling exponent greater than $ \max(N(Q) 2^{-c_1 \log B / \log \log B}, N_0 ) $ is less than $ C B^{n-1-c_2} $.
\end{theorem}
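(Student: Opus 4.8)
The plan is to first pass, via the coordinate chart fixed just before the statement in which $g$ is $(1+\nu)$-comparable to the flat metric, to a purely local assertion about a function $u$ solving $L_1u=0$ on a fixed ball, with all cubes, balls and doubling numbers computed in the chart; the metric distortion then affects only $N_0$, $R$ and the dimensional constants $c_1,c_2,C$ in a controlled way. The single analytic input I would use is quantitative propagation of smallness, already packaged in the growth estimates of Subsection~\ref{subsec:Growth} (cf.\ \eqref{eq:9}, \eqref{eq:14}, \eqref{eq:15}): if $u$ has doubling index $\le N$ on every ball contained in a region, then along any chain of $m$ overlapping balls of comparable radii $\sup|u|$ over the last ball is at most $C^m2^{CN}$ times $\sup|u|$ over the first --- equivalently, $u$ cannot be tiny on one sub-ball and large on a far one without a commensurate increase of the doubling index somewhere in between.

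\textbf{Two lemmas.} Next I would isolate the two facts that drive everything. (i) \emph{Slab lemma:} when $Q$ is cut into $B^n$ equal subcubes, along any axis-parallel line of $B$ consecutive subcubes the number carrying doubling index $\ge\max(N(Q)/2,N_0)$ is bounded by a dimensional constant $C(n)$; more than that would make $u$ grow along the line faster than $N(Q)$ permits. (ii) \emph{Simplex lemma:} there are $c(n)>0$, $C(n)$ such that if $x_0,\dots,x_n$ are the vertices of a simplex of width $\ge w$ (least vertex-to-opposite-face distance over the diameter) inside a cube $\hat Q$, and the doubling index of $u$ at each $x_i$ at the scale of the simplex is $\ge N\ge C(n)$, then the doubling index of $u$ over a concentric cube of size $\le C(n)\diam(\hat Q)/w$ is $\ge(1+c(n))N$. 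Both are propagation-of-smallness statements; (ii) is the delicate one, proved by chaining three-balls inequalities across the $n$ faces and using that the edges of a width-$w$ simplex quantitatively span $\RR^n$, and it is the source of the eventual gain.

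\textbf{One-scale gain and iteration.} Using (i)--(ii) I would prove the one-scale statement by induction on $n$: subdivide $Q$ into $A^n$ for a large fixed $A=A(n)$; if many subcubes are ``bad'' (doubling index above the current threshold), then \emph{either} the bad subcubes are spread across more than $C(n)$ parallel layers in each axis direction, in which case one extracts $n+1$ of them forming a simplex of width $\gtrsim 1/A$ and (ii) forces $N(Q)$ to exceed the threshold by the factor $1+c(n)$, a contradiction once the threshold is kept above $N(Q)/(1+c(n))$; \emph{or} they are confined to $\le C(n)$ parallel layers for some axis, and one then restricts to a single such layer --- a thin slab, essentially an $(n-1)$-dimensional cube --- contradicting the inductive hypothesis there (the base case $n=1$ being the slab lemma). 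The upshot is a one-scale dichotomy: after one subdivision the number of subcubes with doubling index $\ge\max((1-c)N(Q),N_0)$ is at most $\mu A^{n-1}$ for a dimensional $\mu<1$, at the price of relaxing the threshold by $1-c$. I would then iterate over $k=\log_AB$ scales, tracking at level $j$ the cubes $P$ whose $N(P)$ lies above the running threshold: each such $P$ produces at most $\mu A^{n-1}$ such subcubes at level $j+1$, so after $k$ steps the count is at most $C(\mu A^{n-1})^k=C\mu^kA^{k(n-1)}=CB^{\,n-1-c_2}$ with $c_2=\log(1/\mu)/\log A$, while the running threshold has been multiplied by $\prod_{j\le k}(1-c_j)$. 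Choosing the per-scale relaxations $c_j$ and keeping $\mu$ under control against them --- so that the thresholds stay above $N_0$, the product $\prod(1-c_j)$ stays above $2^{-c_1\log B/\log\log B}$, and the count bound is untouched --- is the calibration that produces the exact statement; the $N_0$ floor enters precisely because once the threshold drops to $N_0$ the hypothesis $N\ge C(n)$ in (ii) is no longer available and the iteration terminates.

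\textbf{Main obstacle.} The crux is the quantitative simplex lemma (ii), and in particular making it deliver a definite multiplicative gain while relaxing the threshold only slightly, uniformly in the small parameters --- it is this tradeoff, propagated through the $\sim\log B$ scales, that delivers the nontrivial threshold $N(Q)2^{-c_1\log B/\log\log B}$ rather than something vacuous. By comparison, the convex-geometry step (extracting a width-$\gtrsim 1/A$ simplex from a large bad subset of a grid, or else detecting confinement to few layers and dropping a dimension) and the bookkeeping of metric and elliptic-coefficient errors through the iteration are routine.
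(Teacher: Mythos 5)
First, note that the paper itself offers no proof of Theorem \ref{thm:5.2}: it is imported verbatim from \cite{L1} (Theorem 5.3 there), with only the remark that it refines a one-scale result ``through an iteration argument''. Your proposal is therefore measured against Logunov's argument, and at the level of architecture it is faithful to it: a simplex lemma, a dichotomy between bad subcubes spreading over many parallel layers versus being confined to few (with an induction on dimension in the second branch), and an iteration over scales are exactly the ingredients of \cite{L1}, Sections 4--5. One caveat on the ingredients: your ``slab lemma'' about a line of consecutive subcubes is not what is used and is not clearly true --- a large doubling index on a subcube bounds the growth of $u$ across that subcube from \emph{above}, so many high-doubling subcubes along a line do not force excessive growth of $u$ along it. The correct base ingredient is the hyperplane lemma (if all subcubes meeting a central hyperplane section of $Q$ have doubling index $>N$, then $N(Q)\gtrsim N$), proved by noting that $u$ is then exponentially small on a whole slab and propagating smallness to the cube.

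The genuine gap is in the calibration you yourself single out as the crux. With a fixed subdivision parameter $A=A(n)$ the iteration runs over $k=\log_A B$ levels, and your accounting has the threshold relax at every level. The per-level relaxation is pinned to the fixed dimensional gain $1+c(n)$ of the simplex lemma: the one-scale statement counts precisely the subcubes with $N>N(P)/(1+c)$, and shrinking the relaxations $c_j$ leaves the intermediate subcubes with $N$ between $N(P)(1-c_j)$ and $N(P)/(1+c)$ uncounted, so the count bound does not survive with $c_j\to 0$. A fixed per-level loss over $\log_A B$ levels yields the threshold $N(Q)(1+c)^{-\log_A B}=N(Q)B^{-c'}$, which is polynomially worse than the claimed $N(Q)2^{-c_1\log B/\log\log B}$. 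The point of the actual proof is that the doubling index need only drop at $O(\log B/\log\log B)$ of the levels: Logunov first proves the single-subdivision statement for \emph{every} $B>B_0$ with the fixed threshold loss $1/(1+c)$ and count $CB^{n-1-c_2}$ (this is where the simplex/hyperplane dichotomy and the induction on dimension live), and only then iterates \emph{that} statement $m\sim\log B/\log\log B$ times with subdivision parameter $B^{1/m}\sim\log B$ per step; the threshold then degrades by only $(1+c)^{-m}=2^{-c_1\log B/\log\log B}$, while the count picks up a harmless $C^m=B^{o(1)}$ factor. Without this two-tier structure (or an equivalent tree-counting argument that distinguishes, along each root-to-leaf path, the few ``drop'' levels from the ``exceptional'' levels), the stated threshold $N(Q)2^{-c_1\log B/\log\log B}$ is not reached.
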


The last theorem uses and refines a previous result (Theorem 5.1, \cite{L2}) where roughly speaking the dynamic relation between the size of the small cubes and their doubling index is not estimated with that precision. The discussion proceeds through an iteration argument.

\subsection{Proof of Theorem \ref{thm:Number-of-Zeros}}

	\textbf{Step 1 - the set-up}. We consider the same setting as in Subsection \ref{subsec:Growth}: we have a ball $ B(p, 2r) \subset B(O, R_0) $, numbers $ s \in [r, \frac{3}{2}r ], N \geq 5 $, such that
	\begin{equation}
	N \leq N(p, t) \leq 2eN,
	\end{equation}
	for any $ t \in I $ where $ I $ is the interval defined above.
	
	We also consider a point of maximum $ x \in \partial B(p, s), \sup_{\partial B(p,s)} |u| = |u(x)| =: K $ and a point $ \tilde{x} \in \partial B(p, s(1-\delta))$, such that $ d(x, \tilde{x}) = \delta s $. Here we have introduced the small number $ \delta := \frac{1}{10^8 n^2 \log^2 N} $ (we follow the notation in \cite{L1}, but to avoid confusion, we note that the $\delta$ chosen here is much smaller compared to the $\delta$ used in Subsection \ref{subsec:Growth}). By construction, we have that $ d(x, \tilde{x}) \sim \frac{r}{\log^2 N}$ up to constants depending only on dimension.
	
	Let us denote by $ T $ a (rectangular) box, such that $ x $ and $ \tilde{x} $ are centers of the opposite faces of $ T $ - one side of $ T $ is $ d(x, \tilde{x}) $ and the other $ n-1 $ sides are equal to $ \frac{d(x, \tilde{x})}{[\log N]^4} $, where $ [.] $ denotes the integer part of a given number.
	
	Now, let $ \epsilon \in (0, 1) $ be given. By cutting along the long side of $ T $, we subdivide $ T $ into equal subboxes (referred to as ``tunnels'') $ T_i, i = 1, \dots, [N^\epsilon]^{n-1} $, so that each $ T_i $ has one side of length $ d(x, \tilde{x}) $ and the other $ n-1 $ sides of length $ \frac{d(x, \tilde{x})}{[N^\epsilon][\log N]^4} $.
	
	Further, by cutting perpendicularly to the long side, we divide $ T_i $ into equal cubes $ q_{i, t}, t = 1, \dots, [N^\epsilon][\log N]^4 $ all of which have side-length of $ \frac{d(x, \tilde{x})}{[N^\epsilon][\log N]^4} $ and whose centers are denoted by $ x_{i, t} $. We also arrange the parameter $ t $ so that $ d(q_{i,t}, x) \geq d(q_{i, t+1}, x) $.
	
	We will assume that $ N $ is sufficiently large, i.e. bounded below by $ N_0(n, M, g) > 0 $.
	
	\textbf{Step 2 - growth along a tunnel}. We wish to relate how large $ u $ is at the first and last cubes - $ q_{i, 1} $ and $ q_{i, [N^\epsilon] [\log N]^4} $. To this end we will use the lemmata from Subsection \ref{subsec:Growth}.
	
	First, let us observe that $ q_{i, 1} \subset B (p, s(1 - \frac{\delta}{4})) $. Indeed, for sufficiently large $ N $ we have
	\begin{equation}
		d(p, q_{i, 1}) \leq d(p, \tilde{x}) + d(\tilde{x}, q_{i, 1}) \leq s(1 - \delta) + \frac{C \delta s \sqrt{n}}{[\log N]^4} \leq s \left( 1 -  \frac{\delta}{2} \right).
	\end{equation}
	
	The estimate (\ref{eq:9}) yields
	\begin{equation}
		\sup_{q_{i, 1}} |u| \leq \sup_{B (p, s(1 - \frac{\delta}{4}))} |u| \leq K 2^{-c_1 \frac{N}{\log^2 N} + C_1}.
	\end{equation}
	
	On the other hand, let us denote the last index along the tunnel by $ \tau $, i.e. $ \tau := [N^\epsilon][\log N]^4 $. As the cube $ q_{i, \tau} $ is of size comparable to $ \frac{1}{[N^\epsilon] [\log^6 N]} $ and $ N $ is assumed to be large enough, we can find an inscribed geodesic ball $ B_{i, \tau} \subset \frac{1}{2} q_{i, \tau} $, centered at $ x_{i, \tau} $ and of radius $ \frac{s}{N} $.
	
	Now, by definition $ d(x_{i, \tau}, x) \leq \frac{Cs}{[\log N]^6} $. Hence, the inequality (\ref{eq:15}) implies (taking $ \tilde{x} $ there to be $ x_{i, \tau} $)
	\begin{equation}
		\sup_{q_{i, \tau}} |u| \geq  \sup_{B_{i, \tau}} |u| \geq K 2^{-C_3 \frac{N}{\log^5 N} -C_3}.
	\end{equation}
	
	Putting the last two estimates together, we obtain
	
	\begin{lemma} \label{lem:6.1}
		There exist positive constants $ c, C $ such that
		\begin{equation}
			\sup_{\frac{1}{2} q_{i,[N^\epsilon][\log N]^4 }} |u| \geq \sup_{\frac{1}{2} q_{i, 1}} |u| 2^{cN / \log^2 N - C}.
		\end{equation}
	\end{lemma}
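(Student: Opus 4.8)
The plan is to obtain Lemma \ref{lem:6.1} directly by dividing the two growth estimates established just above in Step 2. The first of these is the upper bound $\sup_{q_{i,1}}|u|\le K\,2^{-c_1 N/\log^2 N+C_1}$, which came from the inclusion $q_{i,1}\subset B(p,s(1-\tfrac{\delta}{4}))$ and (\ref{eq:9}); the second is the lower bound $\sup_{q_{i,\tau}}|u|\ge\sup_{B_{i,\tau}}|u|\ge K\,2^{-C_3 N/\log^5 N-C_3}$, where $\tau:=[N^\epsilon][\log N]^4$, which came from the geodesic ball $B_{i,\tau}\subset\tfrac12 q_{i,\tau}$ of radius $s/N$ together with (\ref{eq:15}) applied with base point $x_{i,\tau}$ (legitimate since $d(x_{i,\tau},x)\le Cs/\log^6 N$). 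First I would note that $\tfrac12 q_{i,1}\subset q_{i,1}$, so the first estimate also bounds $\sup_{\frac12 q_{i,1}}|u|$ from above, and that $B_{i,\tau}\subset\tfrac12 q_{i,\tau}$, so the second bounds $\sup_{\frac12 q_{i,\tau}}|u|$ from below; it therefore suffices to compare the two right-hand sides.

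Dividing them, the common factor $K$ cancels and one is left with
\[
\frac{\sup_{\frac12 q_{i,\tau}}|u|}{\sup_{\frac12 q_{i,1}}|u|}\;\ge\;2^{\,c_1 N/\log^2 N\,-\,C_3 N/\log^5 N\,-\,C_1-C_3}.
\]
The only thing requiring care is that the leading term $c_1 N/\log^2 N$ dominates the error $C_3 N/\log^5 N$; since $\log^3 N\to\infty$, for $N$ larger than a constant $N_0(n,M,g)$ --- which is already in force throughout Step 1 --- one has $C_3 N/\log^5 N\le\tfrac12 c_1 N/\log^2 N$, so the exponent is at least $\tfrac12 c_1 N/\log^2 N-(C_1+C_3)$. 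Setting $c:=c_1/2$ and $C:=C_1+C_3$ yields the claimed inequality.

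I do not expect a real obstacle here: the lemma is bookkeeping on top of the two preceding displays, and its one genuinely quantitative feature --- the separation between the scale $\log^2 N$ on which $u$ decays inside the thin corridor near its maximum point and the scale $\log^5 N$ of the final loss incurred in passing to the small inscribed ball $B_{i,\tau}$ --- was already built into the choices of $\delta=\tfrac{1}{10^8 n^2\log^2 N}$ and of the side-lengths of the box $T$ and the tunnels $T_i$ made in Step 1. All the analytic content is carried by Lemma 4.1 and Lemma 4.2 of \cite{L1}, reproduced above as (\ref{eq:9}) and (\ref{eq:15}), which themselves rely only on the scaling identity for $H(p,t)$ and classical harmonic-function estimates.
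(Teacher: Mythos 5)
Your proposal is correct and is essentially identical to the paper's own argument: the paper also obtains the lemma by "putting the last two estimates together," i.e.\ dividing the upper bound $\sup_{q_{i,1}}|u|\le K\,2^{-c_1N/\log^2N+C_1}$ by the lower bound $\sup_{\frac12 q_{i,\tau}}|u|\ge\sup_{B_{i,\tau}}|u|\ge K\,2^{-C_3N/\log^5N-C_3}$. Your explicit remark that $C_3N/\log^5N$ is absorbed by $\tfrac12 c_1N/\log^2N$ for $N\ge N_0$ is the only detail the paper leaves implicit, and it is handled correctly.
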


	\textbf{Step 3 - bound on the number of good tunnels}. Next, we show that there are sufficiently many tunnels, such that the doubling exponents of the contained cubes are controlled (cf. Claim 6.2, \cite{L1}). More precisely,

	\begin{lemma}\label{Lem:good-tunn}
		There exist constants $ c = c(\epsilon) > 0, N_0 > 0 $ such that at least half of the tunnels $ T_i $ are ``good'' in the sense that they have the following property:
		
		For each cube $ q_{i, t} \in T_i, t \in 1, \dots, [N^\epsilon][\log N]^4 $ we have
		\begin{equation}
			N(q_{i, t}) \leq \max \left( \frac{N}{2^{c \log N / \log \log N}}, N_0 \right).
		\end{equation}
	\end{lemma}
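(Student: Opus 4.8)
The plan is to derive Lemma~\ref{Lem:good-tunn} from the bad-cube bound of Theorem~\ref{thm:5.2}. First I would enclose the box $T$ in a cube $Q$ of side-length $\sim d(x,\tilde x) = \delta s$, arranged so that, after partitioning $Q$ into $B^n$ equal subcubes with $B := [N^\epsilon][\log N]^4$, the slices $q_{i,t}$ are exactly the subcubes of $Q$ lying inside $T$; each such subcube then has side-length $\tfrac{d(x,\tilde x)}{[N^\epsilon][\log N]^4}$, matching $q_{i,t}$, there are $[N^\epsilon]^{n-1}$ tunnels $T_i$, and each tunnel consists of $[N^\epsilon][\log N]^4$ of the subcubes (along its long direction). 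Since $\diam Q \sim r/\log^2 N$, the cube $Q$ sits well inside $B(O,R)$, and $B > B_0$ for $N$ large, so Theorem~\ref{thm:5.2} is applicable. One also needs the a~priori bound $N(Q) \leq C_0 N$ for a dimensional constant $C_0$: this follows from the growth estimates of Subsection~\ref{subsec:Growth} near the maximum point $x$ — inequality~(\ref{eq:14}), invoked with a suitably enlarged but still admissible choice of the parameter $\delta$ so as to cover all of $Q$, gives $\mathcal{N}(y,\cdot)\lesssim N$ for every $y\in Q$, and the scale-comparison of doubling numbers recorded just above propagates this to all radii $\leq \diam Q$.

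Applying Theorem~\ref{thm:5.2} to this partition, since $[\log N]^4 \leq N^\epsilon$ for $N$ large one has $\epsilon\log N - 1 \leq \log B \leq 2\epsilon\log N$, hence $\log B/\log\log B \geq \tfrac{\epsilon}{2}\,\log N/\log\log N$; feeding this and $N(Q)\leq C_0 N$ into the threshold $\max\!\big(N(Q)\,2^{-c_1\log B/\log\log B},\,N_0\big)$ supplied by the theorem and absorbing the constants $C_0, c_1$, one finds it is bounded above by $\max\!\big(N\,2^{-c\log N/\log\log N},\,N_0\big)$ for some $c = c(\epsilon) > 0$ and $N$ large — exactly the threshold appearing in the statement of the lemma. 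Theorem~\ref{thm:5.2} then bounds the number of subcubes $q_{i,t}$ whose doubling index $N(q_{i,t})$ exceeds this threshold by $C\,B^{n-1-c_2} = C\big([N^\epsilon][\log N]^4\big)^{n-1-c_2}$.

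It remains to convert this into a count of bad tunnels. A tunnel $T_i$ fails to be good precisely when it contains a subcube of doubling index above the threshold, and each subcube lies in a unique tunnel; hence the number of bad tunnels is at most the number of bad subcubes, i.e. at most $C\big([N^\epsilon][\log N]^4\big)^{n-1-c_2}$. Dividing by the total number $[N^\epsilon]^{n-1}$ of tunnels, the proportion of bad tunnels is $\lesssim [\log N]^{4(n-1-c_2)}/[N^\epsilon]^{c_2}$, which tends to $0$ as $N\to\infty$; so for $N$ larger than some $N_0 = N_0(\epsilon,n,M,g)$ strictly fewer than half of the tunnels are bad, which is the assertion.

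The step I expect to demand the most care is matching the two thresholds: one must track the precise interaction between $\epsilon$, the base $B = [N^\epsilon][\log N]^4$, and the constants $c_1, c_2$ from Theorem~\ref{thm:5.2}, and verify that the polylogarithmic loss carried by the extra factor $[\log N]^4$ in $B$ is absorbed by the power saving $[N^\epsilon]^{-c_2}$. A second delicate point is the a~priori bound $N(Q)\leq C_0 N$, whose proof hinges on controlling how the doubling index of $u$ behaves as the center moves off the maximum point $x$ and the scale shrinks below $\delta s$; everything else is routine counting.
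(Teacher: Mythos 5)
Your proposal is correct and follows essentially the same route as the paper: reduce to the bad-cube count of Theorem \ref{thm:5.2}, control the doubling index of the ambient cube(s) via the growth estimates near the maximum point (inequality (\ref{eq:14}) together with the scale-comparison of doubling numbers), and conclude by pigeonhole since each bad cube spoils at most one tunnel. The only (harmless) difference is that the paper applies Theorem \ref{thm:5.2} with $B=[N^\epsilon]$ separately to each of the $[\log N]^4$ cubes $Q_t$ of side $d(x,\tilde x)/[\log N]^4$ tiling $T$, obtaining at most $C[N^\epsilon]^{n-1-c_2}[\log N]^4$ bad cubes, whereas you apply it once to an enclosing cube with $B=[N^\epsilon][\log N]^4$; both counts are $o([N^\epsilon]^{n-1})$ and the threshold bookkeeping works out the same way.
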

	
	\begin{proof}
		We assume that $ N $ is sufficiently big. We focus on the cubes that fail to satisfy this condition, i.e. we consider the ``bad'' cubes $ q_{i, t} $ for which
		\begin{equation}
			N(q_{i,t}) > N 2^{-c \log N / \log \log N}.
		\end{equation}
		The constant $ c = c(\epsilon) $ stems from Theorem \ref{thm:5.2} and is addressed below. As the number of all tunnels is $ [N^\epsilon]^{n-1} $, by the pigeonhole principle, the claim of the lemma will follow if one shows that the number of bad cubes does not exceed $ \frac{1}{2} [N^\epsilon]^{n-1} $.
		
		To this end, we apply Theorem \ref{thm:5.2} in the following way. We divide $ T $ into equal Euclidean cubes $ Q_t, t = 1, \dots, [\log N]^4 $ of side-length $ \frac{d(x, \tilde{x})}{[\log N]^4} $. We need to control $ N(Q_t) $ via $ N $. To do this, observe that
		\begin{equation}
			d(x, y) \leq 4 d(x, \tilde{x}) \leq \frac{s}{10^7 \log^2 N},
		\end{equation}
		that is $ y $ is not far from the maximum point. Hence, we can apply (\ref{eq:14}) and obtain
		\begin{equation}
			\frac{\sup_{B(y, \frac{s}{10^7 \log^2 N})} |u|}{\sup_{B(y, \frac{1}{2} \frac{s}{10^7 \log^2 N})} |u|} \leq 2^{C \frac{N}{\log^2 N} + C}.
		\end{equation}
		
		The definition and monotonicity of $ N(Q_t) $ as well as the assumption that $ N > N_0 $ imply that
		\begin{equation}
			N(Q_t) \leq N, \quad t = 1, \dots, [\log N]^4.
		\end{equation}
		
		Now, the application of Theorem \ref{thm:5.2} with $ B = [N^\epsilon] $ gives that the number of bad cubes contained in $ Q_t $ (i.e., cubes whose doubling exponent is greater than $
		\max \left( N(Q_t) 2^{-c_1 \log (N^\epsilon) / \log \log (N^\epsilon)}, N_0 \right) $)  is less than $ C [N^\epsilon]^{n-1-c_2} $. Note that we can absorb the $ \epsilon $ term in the constant $ c_1 $ and deduce that the bad cubes have a doubling exponent greater than $ \max \left( N(Q_t) 2^{-c(\epsilon) \log N / \log \log N}, N_0 \right) $.
		
		Summing over all cubes $ Q_t $ we obtain that the number of all bad cubes in $ T $ is no more than
		
		\begin{equation}
			C[N^\epsilon]^{n-1-c_2} [\log N]^4 \leq \frac{1}{2} [N^\epsilon]^{n-1}.
		\end{equation}
	\end{proof}
	
	\textbf{Step 4 - zeros along a good tunnel}. Finally, we will count zeros of $ u $ along a good tunnel. Roughly, the harmonic function $ u $ has tame growth along a good tunnel. If $ u $ does not change sign, one could use the Harnack inequality to bound the growth of $ u $ in a suitable way. Summing up the growth over all cubes along a tunnel and using the estimate in Step $ 2 $ we obtain (cf. Claim 6.3, \cite{L1}):
	
	\begin{lemma}
		There exists a constant $ c_2 = c_2(\epsilon) > 0 $ such that if $ N $ is sufficiently large and $ T_i $ is a good tunnel, then there are at least $ 2^{c_2 \log N / \log \log N} $ closed cubes $ \bar{q}_{i,t} $ that contain a zero of $ u $.
	\end{lemma}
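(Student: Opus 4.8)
The plan is to follow Logunov's argument for Claim 6.3 of \cite{L1}: convert the rapid growth of $u$ along the good tunnel $T_i$ recorded in Lemma \ref{lem:6.1} into a lower bound for the number of cubes along $T_i$ that carry a zero of $u$. Write $\tau := [N^\epsilon][\log N]^4$ for the number of cubes $q_{i,t}$, let $\ell$ be their common side length, and set
\begin{equation}
M_0 := N\,2^{-c(\epsilon)\log N/\log\log N},
\end{equation}
the doubling bound furnished by Lemma \ref{Lem:good-tunn}; for $N$ large one has $M_0 \ge N_0$, so the maximum in that lemma equals $M_0$, hence $N(q_{i,t}) \le M_0$ for every cube of a good tunnel. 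Let $m$ be the number of closed cubes $\bar q_{i,t}$, $1\le t\le\tau$, that contain a zero of $u$. Since $u$ is a nonzero harmonic function, unique continuation gives $\sup_{\frac12 q_{i,t}}|u|>0$ for all $t$, so all the ratios below make sense. The goal is to show $m\ge 2^{c_2\log N/\log\log N}$ for a suitable $c_2=c_2(\epsilon)>0$.

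The first step is to record two estimates for the growth of $\sup|u|$ across one step of the tunnel. If $\bar q_{i,t}$ and $\bar q_{i,t+1}$ both lie in a common maximal run of consecutive closed cubes none of which meets $N_u$, then, since consecutive closed cubes share a face and $u$ vanishes on none of them, $u$ has a fixed sign on the union of that run; a short Harnack chain joining the two concentric half-cubes (which sit compactly inside the interior of the run) then yields $\sup_{\frac12 q_{i,t+1}}|u| \le C_H \sup_{\frac12 q_{i,t}}|u|$ with $C_H=C_H(n)$. For an arbitrary $t$, both $\frac12 q_{i,t+1}$ and a fixed ball of radius $\sim\ell$ contained in $\frac12 q_{i,t}$ lie in a single ball of radius $\sim\ell$ centered in $q_{i,t}$, so the definition and monotonicity of the doubling index give
\begin{equation}
\frac{\sup_{\frac12 q_{i,t+1}}|u|}{\sup_{\frac12 q_{i,t}}|u|}\le 2^{C N(q_{i,t})}\le 2^{C M_0}.
\end{equation}

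Next I would telescope along $T_i$: every pair $(t,t+1)$ in which neither cube meets $N_u$ lies in a maximal zero-free run and contributes a factor $\le C_H$, while every remaining pair contributes $\le 2^{C M_0}$, and there are at most $2m$ of the latter (each zero-carrying cube belongs to at most two consecutive pairs). Combining this with Lemma \ref{lem:6.1},
\begin{equation}
2^{cN/\log^2 N-C}\le \frac{\sup_{\frac12 q_{i,\tau}}|u|}{\sup_{\frac12 q_{i,1}}|u|}\le C_H^{\,\tau}\,2^{2C M_0 m}.
\end{equation}
Taking base-$2$ logarithms and using $\tau=[N^\epsilon][\log N]^4\le N^\epsilon(\log N)^4$ together with $\epsilon<1$, the terms $\tau\log_2 C_H$ and the additive $C$ are $o(N/\log^2 N)$, so for $N$ large the left-hand term dominates and $m\gtrsim (N/\log^2 N)/M_0 = 2^{c(\epsilon)\log N/\log\log N}/\log^2 N$. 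Since $\log N/\log\log N$ grows much faster than $\log\log N$, the denominator $\log^2 N$ and the implied constant are absorbed into the exponent, yielding $m\ge 2^{c_2\log N/\log\log N}$ with, say, $c_2=c(\epsilon)/2$.

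The step I expect to demand the most care is the Harnack bound: one must ensure that $u$ is nonzero on a genuine open neighbourhood of $\frac12 q_{i,t}\cup\frac12 q_{i,t+1}$ rather than merely on the closed cubes, so that the Harnack-chain constant is purely dimensional — this is precisely why one works with closed cubes and why a little attention is needed at the two ends of the tunnel. The only other thing to check carefully is that $\epsilon<1$ really makes $\tau$ negligible compared with the gain $N/\log^2 N$ produced by Lemma \ref{lem:6.1}; the rest is the routine accounting displayed above.
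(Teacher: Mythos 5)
Your proposal is correct and follows essentially the same route as the paper: telescope the one-step ratios $\sup_{\frac12 q_{i,t+1}}|u|/\sup_{\frac12 q_{i,t}}|u|$ along the tunnel, controlling zero-free steps by Harnack and the remaining steps by the good-tunnel doubling bound $N\,2^{-c(\epsilon)\log N/\log\log N}$, then use Lemma \ref{lem:6.1} and $\tau\lesssim N^\epsilon\log^4N=o(N/\log^2N)$ to force many non-Harnack steps. The only difference is cosmetic bookkeeping (you count zero-carrying cubes $m$ with a factor-of-two correction, whereas the paper counts sign-change pairs $|S_2|$ directly), and your handling of the Harnack step is, if anything, slightly more careful.
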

	
	\begin{proof}
		As the tunnel is good, Lemma \ref{Lem:good-tunn} gives that for every $ t = 1, \dots, [N^\epsilon][\log N]^4 - 1 $ we have
		\begin{equation}
			\log \frac{\sup_{\frac{1}{2} q_{i, t+1}} |u|}{\sup_{\frac{1}{2} q_{i, t}} |u|} \leq \log \frac{\sup_{4 q_{i, t}} |u|}{\sup_{\frac{1}{2} q_{i, t}} |u|} \leq \frac{N}{2^{c_1 \log N / \log \log N}}.
		\end{equation}
		
		We split the index set $ \{ 1, \dots, [N^\epsilon][\log N]^4 - 1 \} $ into two disjoint subsets $ S_1, S_2 $: an index $ t $ is in $ S_1 $ provided $ u $ does not change sign in $ \bar{q}_{i, t} \cup \bar{q}_{i, t+1}  $. The advantage in $ S_1 $ is that one can use the Harnack inequality. For $ t \in S_1 $ we have
		\begin{equation}
			\log \frac{\sup_{\frac{1}{2} q_{i, t+1}} |u|}{\sup_{\frac{1}{2} q_{i, t}} |u|} \leq C_1.
		\end{equation}
		
		Using Lemma \ref{lem:6.1} and summing-up we obtain
		
		\begin{align}
			c\frac{N}{\log^2 N} - C & \leq \log \frac{\sup_{\frac{1}{2} q_{i, [N^\epsilon][\log N]^4}} |u|}{\sup_{\frac{1}{2} q_{i, 1}} |u|} = \sum_{S_1} \log \frac{\sup_{\frac{1}{2} q_{i, t+1}} |u|}{\sup_{\frac{1}{2} q_{i, t}} |u|} + \sum_{S_2} \log \frac{\sup_{\frac{1}{2} q_{i, t+1}} |u|}{\sup_{\frac{1}{2} q_{i, t}} |u|} \leq \\
			& \leq C_1 |S_1| + \frac{N}{2^{c_1 \log N / \log \log N}} |S_2| \leq C_1 [N^\epsilon] \log^4 N + \frac{N}{2^{c_1 \log N / \log \log N}} |S_2| \leq \\
			& \leq \frac{c}{2} \frac{N}{\log^2 N} - C + \frac{N}{2^{c_1 \log N / \log \log N}} |S_2|.
		\end{align}
		
		This shows that
		\begin{equation}
			|S_2| \geq 2^{\frac{c_1}{2} \log N / \log \log N}.
		\end{equation}
	\end{proof}
	
	We have already seen that there are at least $ \frac{1}{2} [N^\epsilon]^{n-1} $ good tunnels, which, by summing-up, means that the number of small cubes, where $ u $ changes sign is at least $ \frac{1}{2} [N^\epsilon]^{n-1}  2^{c_2 \log N / \log \log N}  $.

	Finally, in each cube $ \bar{q}_{i,t} $ let us fix a zero $ x_{i, t} \in \bar{q}_{i,t}, u(x_{i, t}) = 0 $ and note that
	\begin{equation}
		\diam(\bar{q}_{i,t}) \sim \frac{r}{N^\epsilon \log^6 N}.
	\end{equation}
	Each ball $ B(x_{i,t}, \frac{r}{N^\epsilon \log^6 N}) $ intersects at most $ \kappa = \kappa(n) $ other balls of this kind. By taking a maximal disjoint collection of such balls and reducing the constant $ c_2 $ to $ c_3 = c_3(\epsilon) $ we conclude the proof of Theorem \ref{thm:Number-of-Zeros}.

\end{document}